\DeclareMathOperator{\dom}{dom}
\DeclareMathOperator{\cof}{cof}
\DeclareMathOperator{\crit}{crit}
\DeclareMathOperator{\rank}{rank}
\DeclareMathOperator{\cf}{cf}
\DeclareMathOperator{\sk}{Sk}
\DeclareMathOperator{\Ult}{Ult}
\newcommand{\chang}{\twoheadrightarrow}
\newtheorem{theorem}{Theorem}
\newaliascnt{example}{theorem}
\newaliascnt{fact}{theorem}
\newaliascnt{corollary}{theorem}
\newtheorem{corollary}[corollary]{Corollary}
\newaliascnt{lemma}{theorem}
\newtheorem{lemma}[lemma]{Lemma}
\newaliascnt{claim}{theorem}
\newtheorem{claim}[lemma]{Claim}
\newtheorem{prop}[theorem]{Proposition}
\theoremstyle{definition}
\newaliascnt{definition}{theorem}
\newtheorem*{theorem*}{Theorem}
\newtheorem*{example*}{Example}
\newtheorem*{definition*}{Definition}
\newtheorem*{question*}{Question}
\DeclareMathOperator{\ran}{ran}
\DeclareMathOperator{\ot}{ot}
\DeclareMathOperator{\add}{Add}
\DeclareMathOperator{\col}{Col}
\DeclareMathOperator{\ord}{Ord}
\DeclareMathOperator{\id}{id}
\DeclareMathOperator{\ns}{NS}
\DeclareMathOperator{\wcc}{wCC}
\DeclareMathOperator{\cc}{CC}
\DeclareMathOperator{\ccc}{clubCC}
\newcommand{\calA}{\mathcal{A}}
\newcommand{\calB}{\mathcal{B}}
\newcommand{\calC}{\mathcal{C}}
\newcommand{\calD}{\mathcal{D}}
\newcommand{\calE}{\ensuremath{\mathcal{E}}}
\newcommand{\calG}{\ensuremath{\mathcal{G}}}
\newcommand{\calI}{\ensuremath{\mathcal{I}}}
\newcommand{\calJ}{\ensuremath{\mathcal{J}}}
\newcommand{\calS}{\mathcal{S}}
\newcommand{\calT}{\mathcal{T}}
\newcommand{\calU}{\mathcal{U}}
\newcommand{\calW}{\mathcal{W}}
\newcommand{\rest}{\restriction}
\newcommand{\p}{\mathcal{P}}
\newcommand{\la}{\langle}
\newcommand{\ra}{\rangle}
\title{Weak saturation properties and side conditions}
\author{Monroe Eskew}
\thanks{This work was supported by the Austrian Science Fund (FWF) through Project P34603.}
\begin{document}
\maketitle

\begin{abstract}
Towards combining ``compactness'' and ``hugeness'' properties at $\omega_2$, we investigate the relevance of side-conditions forcing.  We reduce the upper bound on the consistency strength of the weak Chang's Conjecture at $\omega_2$ using Neeman's forcing.  But we find a barrier to the applicability of these methods to our problem and give a counterexample to a claim of Neeman about the effects of iterating such forcing.
\end{abstract}

\section{Introduction}

Can ``compactness'' and ``hugeness'' properties coexist at small cardinals?  More specifically, can $\omega_2$ satisfy the tree property and also carry a saturated ideal?     This question is natural for anyone concerned with ``large cardinal properties'' that can hold of small cardinals.  It turns out to be surprisingly difficult to answer.  Cox and the author showed in \cite{coxeskew} that if this situation is consistent, then (a) it requires the continuum to be at least $\omega_3$, and (b) it cannot be forced via a ``Kunen-style'' generic lifting of an almost-huge embedding, which is presently the only known method to produce models with saturated ideals on $\omega_2$.  This means that, if these properties of $\omega_2$ are mutually consistent, a proof will require rather novel methods.

A promising idea is to look towards the newer techniques of forcing with sequences of models.  The basic idea of using models as side-conditions to control a forcing construction was invented by Todor\v{c}evi\'{c} \cite{todo89}.  Work of Friedman \cite{friedman} and Mitchell \cite{mitchellapp} expanded upon this idea, and influential work of Neeman \cite{neeman} made further advances, providing an elegant framework using two types of models.  Mitchell \cite{mitchellapp} and Neeman \cite{neeman} also suggested iterating such forcing to obtain the tree property on successive cardinals.  The author realized that, when combined with very large cardinals, this could also simultaneously force an approximation of saturation at $\omega_2$, namely the weak Chang's Conjecture $\wcc(\omega_2,\cof(\omega_1))$.

However, we ultimately found some combinatorial constraints that make the Neeman technique unsuitable for combining compactness and hugeness properties at successor cardinals.  Thus the present work can be seen as adding to the limitative results of \cite{coxeskew}.  In particular, Theorem \ref{wccsquare} says that if the continuum is at most $\omega_2$ and $\wcc(\omega_2,\cof(\omega_1))$ holds, then there is a special $\omega_2$-Aronszajn tree.  This implies that an iteration of side-conditions forcing cannot work as claimed in \cite{neeman} in full generality.

Another method to investigate would be that developed by Mohammadpour and Veli\v{c}kovi\'c \cite{mv}, which obtains the tree property at $\omega_2$ and $\omega_3$ while strengthening the celebrated consistency result of Mitchell \cite{mitchellapp}.  Since their model satisfies $2^\omega = \omega_3$, it evades the constraint imposed by Theorem \ref{wccsquare}.  However, Mohammadpour showed that their poset always forces $\neg\wcc(\omega_2,\cof(\omega_1))$, and thus that there is no saturated ideal on $\omega_2$.  We reproduce his argument here with his kind permission.

Our results here are not entirely negative.  We uncover some new combinatorial facts that add to the tension between ``compactness'' and ``hugeness'' properties.  We isolate a new large cardinal notion, ``ambitious'' cardinals, and show several equivalent characterizations.  We show how Neeman forcing may introduce the weak Chang's Conjecture and derive a forcing characterization of ambitiousness.  We reduce the known upper bound on the consistency strength of $\wcc(\omega_2,\cof(\omega_1))$.

The structure of the paper is as follows.  In \S\ref{sec:weaksat}, we discuss saturation properties of ideals, versions of Chang's Conjecture, and equivalences in terms of  properties of collections of elementary submodels.
In \S\ref{sec:ccsquare}, we show some new combinatorial results connecting versions of Chang's Conjecture and square principles.
  In \S\ref{sec:neeman}, we introduce a modified version of Neeman forcing and lay out its important properties.  In \S\ref{sec:ee}, we discuss a large cardinal above $0^\sharp$ but below measurable that can be used with Neeman forcing to obtain weak presaturation at $\omega_1$.  In \S\ref{sec:wccomega_2}, we obtain a new upper bound on $\wcc(\omega_2,\cof(\omega_1))$.  In \S\ref{sec:ambitious}, we introduce ambitious cardinals and discuss their connection with so-called Magidor models.  
Finally, in \S\ref{sec:hopeless}, we give a forcing characterization of ambitiousness, discuss the implications for iterated Neeman forcing, and sketch Mohammadpour's argument that the forcing of \cite{mv} does not allow hugeness properties of $\omega_2$.

We assume familiarity with the basics of stationary sets, large cardinals, forcing, and the notions of properness and strong properness for a class of models.  We also assume a strong familiarity with \cite{neeman}, as we will rely on it.  

To fix some notation, if $\kappa$ is a cardinal and $X$ is a set, we use $[X]^{<\kappa}$ to denote $\{ z \subseteq X : |z| < \kappa \}$, and if $\kappa \subseteq X$, we use $\p_\kappa(X)$ to denote $\{ z \subseteq X : |z| < \kappa \wedge z \cap \kappa \in \kappa \}$.  For a set of ordinals $x$, we let $\ot(x)$ denote its order-type.
  $H_\kappa$ denotes the collection of all sets of hereditary cardinality less than $\kappa$.  For a structure $\frak A$ on $X$ carrying a well-order and $Y \subseteq X$, we write $\sk^{\frak A}(Y)$ for the Skolem hull of $Y$, i.e.\ the closure of $Y$ under the definable Skolem functions.

\section{Weak saturation properties}
\label{sec:weaksat}
Foreman \cite{foreman} proved an equivalence between a normal fine ideal being saturated and a statement about elementary submodels.  Inspired by this, we identify some weakenings of saturation that also have equivalences in terms of elementary submodels, in the hopes that a forcing such as Neeman's, which is built from sequences of models, may be able to achieve desired results by directly manipulating properties of collections of elementary submodels.  Let us first recall some basic notions.

$\calI$ is an \emph{ideal} over a set $Z$ when $\calI \subseteq \p(Z)$, and $\calI$ is closed under subsets and pairwise unions.  For a cardinal $\kappa$, $\calI$ is \emph{$\kappa$-complete} when it is closed under unions of size ${<}\kappa$.  We say a set $A \subseteq Z$ is \emph{$\calI$-positive} when it is not in $\calI$.  We denote the collection of all such sets as $\calI^+$, and we let $\calI^d$ denote the filter dual to $\calI$.  If $Z \subseteq \p(X)$, we say that $\calI$ is \emph{fine} when $\{ z \in Z : x \in z \} \in \calI^d$ for all $x \in X$, and \emph{normal} when for every $\calI$-positive $A$ and every function $f : A \to X$ such that $f(z) \in z$ for all $z \in A$, $f$ is constant on an $\calI$-positive set. 
A normal fine ideal $\calI$ on $Z \subseteq \p(X)$ is said to be \emph{saturated} when for every collection $\calA$ of $\calI$-positive sets that have pairwise intersection in $\calI$, $|\calA| \leq |X|.$  In other words, the Boolean algebra $\p(Z)/\calI$ has the $|X|^+$-chain condition.

A set $Z \subseteq \p(X)$ is \emph{stationary} when for all $F : X^{<\omega} \to X$, there is $z \in Z$ closed under $F$.  The set of all $z \in \p(X)$ 
closed under such an $F$ is called a \emph{closed-unbounded set} or a \emph{club}.  It is well-known that for all stationary sets $Z$, the collection of all nonstationary subsets of $Z$ forms the smallest normal fine ideal over $Z$.  The ideal of nonstationary subsets of $Z$ will be denoted by $\ns_Z$, or just $\ns$ when $Z$ is clear from context.  If $Z$ is a regular cardinal $\kappa$, this notion of club coincides with the usual order-topological notion.

Suppose $\pi : Z_1 \to Z_0$ and $\calJ$ is an ideal on $Z_1$.  The map $\pi$ induces an ideal $\calI$ on $Z_0$, where we put $A \in \calI$ iff $\pi^{-1}[A] \in \calJ$.  If $\calJ$ is $\kappa$-complete, then so is $\calI$.  If $Z_i \subseteq \p(X_i)$ for $i<2$, $X_0 \subseteq X_1$, and $\pi$ is the map $z \mapsto z \cap X_0$, then we say that $\calI$ is the \emph{canonical projection} of $\calJ$.  In this case, if $\calJ$ is normal and fine, then so is $\calI$.
For a normal fine ideal $\calI$ on $Z \subseteq \p(X)$ and $\theta>2^{|Z|}$, Foreman \cite{foreman} defined a model $M \prec H_\theta$ to be \emph{$\calI$-good} when $\calI \in M$ and $M \cap X \in A$ for every $A \in \calI^d \cap M$.  He showed:

\begin{theorem}[Foreman]
Suppose $\calI$ is a normal fine ideal on $Z \subseteq \p(X)$, and $\theta>2^{|Z|}$ is regular.
\begin{enumerate}
\item If $\calG$ is the set of $\calI$-good $M \prec H_\theta$, then $\calG$ is stationary, and $\calI$ is the canonical projection of $\ns \rest \calG$.
\item If $|X| = |Z|$, then $\calI$ is saturated if and only if for every $\calI$-good $M \prec H_\theta$ and every maximal antichain $\calA \subseteq \p(Z)/\calI$ in $M$, there is $a \in \calA \cap M$ such that $M \cap X \in a$.
\end{enumerate} 
\end{theorem}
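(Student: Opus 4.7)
The plan is to prove (1) first and then bootstrap to (2), using a model-theoretic chain construction in each direction.

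For (1), I would first note that $\calI$-goodness forces $M\cap\kappa$ to be an ordinal below $\kappa$, since $\kappa\in\calI^d\cap M$ yields $M\cap\kappa\in\kappa$ directly from the definition. To show $\calG$ is stationary, I fix an arbitrary $F\colon H_\theta^{<\omega}\to H_\theta$ and build a continuous $\subseteq$-increasing chain $\langle M_\alpha:\alpha<\kappa\rangle$ of elementary submodels of $H_\theta$ of cardinality below $\kappa$, each closed under $F$ and containing $\calI$. On a club $C\subseteq\kappa$, $M_\alpha\cap\kappa=\alpha$. Setting $B_\alpha=\bigcap(\calI^d\cap M_\alpha)$, which lies in $\calI^d$ by $\kappa$-completeness and $|M_\alpha|<\kappa$, normality yields $\Delta_\alpha B_\alpha\in\calI^d$. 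At any limit $\alpha\in C\cap\Delta_\alpha B_\alpha$, $M_\alpha$ is $\calI$-good: any $A\in\calI^d\cap M_\alpha$ enters at some stage $\beta<\alpha$ by continuity, so $\alpha\in B_\beta\subseteq A$. Running the argument restricted to $\alpha\in A$ for $A\in\calI^+$ shows $\pi^{-1}(A)$ is stationary, where $\pi(M)=M\cap\kappa$; the reverse inclusion, $A\in\calI\Rightarrow\pi^{-1}(A)$ nonstationary, is immediate from $\calI$-goodness applied to $\kappa\setminus A\in\calI^d\cap M$.

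For the forward direction of (2), assume $\calI$ is saturated. Given a maximal antichain $\calA\in M$, saturation gives $|\calA|\leq\kappa$, so $M$ contains an enumeration $\langle a_\xi:\xi<\kappa\rangle$. I would consider $D=\{\alpha<\kappa:\forall\xi<\alpha,\,\alpha\notin a_\xi\}$, which lies in $M$. If $\alpha\in D\cap a_\xi$, then the least $\zeta$ with $\alpha\in a_\zeta$ is $\geq\alpha$, and since $\alpha\in a_\xi$ we get $\alpha\leq\xi$; hence $D\cap a_\xi\subseteq\xi+1$ is bounded and in $\calI$. If $D$ were $\calI$-positive, maximality of $\calA$ would produce $\xi$ with $D\cap a_\xi\in\calI^+$, a contradiction. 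So $D\in\calI$, and by $\calI$-goodness $M\cap\kappa\notin D$; this yields $\xi<M\cap\kappa$ with $M\cap\kappa\in a_\xi$, and since $\xi\in M$, $a_\xi\in\calA\cap M$.

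For the reverse direction, assume the model-theoretic condition and suppose towards a contradiction that $\calA$ is a maximal antichain with $|\calA|=\mu\geq\kappa^+$; fix an enumeration $\langle a_\xi:\xi<\mu\rangle$. I would build a continuous chain $\langle M_\alpha:\alpha<\kappa\rangle$ of elementary submodels of $H_\theta$ of size less than $\kappa$ containing $\calA$ and the enumeration, with bookkeeping so that, on a club $C'$, $M_\alpha\cap\mu=\{\eta_\beta:\beta<\alpha\}$ for some enumeration $\{\eta_\beta:\beta<\kappa\}$ of $N\cap\mu$, where $N=\bigcup_\alpha M_\alpha$. Intersecting with the club from (1) gives a club $C\subseteq C'$ on which $M_\alpha$ is also $\calI$-good with $M_\alpha\cap\kappa=\alpha$. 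Pick $\xi^*\in\mu\setminus N$, which exists as $|N|=\kappa<\mu$. For $\alpha\in a_{\xi^*}\cap C\in\calI^+$, the hypothesis applied to $M_\alpha$ and $\calA$ yields $\eta_\beta\in M_\alpha$ with $\alpha\in a_{\eta_\beta}$; by the bookkeeping $\beta<\alpha$, and $\eta_\beta\neq\xi^*$ since $\xi^*\notin N$. Hence $a_{\xi^*}\cap C\subseteq\nabla_\beta(a_{\xi^*}\cap a_{\eta_\beta})$, and the latter lies in $\calI$ by normality and the antichain property. This contradicts $a_{\xi^*}\cap C\in\calI^+$.

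The main obstacle is the bookkeeping in the reverse direction: I need the enumeration of $N\cap\mu$ to be aligned with the chain so that stage-of-entry matches enumeration-index on a club. This requires controlling Skolem-hull growth so that $\alpha\mapsto|M_\alpha\cap\mu|$ is a continuous increasing function with value $\alpha$ on a club, at which point $C'$ can be identified and the normal diagonal union closes the argument. The forward direction, by contrast, is a clean application of maximality and $\kappa$-completeness.
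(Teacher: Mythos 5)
The paper does not give a proof of this theorem; it is cited to Foreman and used as a black box, so there is no internal proof to compare against. On its own merits your argument is correct and essentially the standard one.

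A few small points. In part (1), the set of $\alpha$ for which $M_\alpha$ is $\calI$-good is $C\cap\Delta_\alpha B_\alpha$, which is in $\calI^d$ but is \emph{not} a club of $\kappa$ in general (it depends on the ideal). In the reverse direction of (2) you repeat this slip, calling the set on which the $M_\alpha$ are $\calI$-good ``a club $C$.'' It is only an $\calI$-measure-one set. This does not damage the argument --- you only use that $a_{\xi^*}\cap C\in\calI^+$, which holds whenever $C\in\calI^d$ --- but the terminology should be corrected. Also in the reverse direction you should specify that the enumeration $\langle a_\xi:\xi<\mu\rangle$ is injective; otherwise $\eta_\beta\neq\xi^*$ does not guarantee $a_{\eta_\beta}\neq a_{\xi^*}$ and the antichain property cannot be invoked.

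The ``main obstacle'' you flag --- aligning the enumeration of $N\cap\mu$ with the stages of the chain --- is in fact routine and not an obstacle. Build the chain first, let $N=\bigcup_\alpha M_\alpha$, fix any bijective enumeration $\langle\eta_\beta:\beta<\kappa\rangle$ of $N\cap\mu$, and consider the two regressive-growth functions $f(\alpha)=\sup\{\beta+1:\eta_\beta\in M_\alpha\}$ and $g(\alpha)=$ the least $\gamma$ with $\{\eta_\beta:\beta<\alpha\}\subseteq M_\gamma$. The set of limit $\alpha$ closed under both is a club, and on it $M_\alpha\cap\mu=\{\eta_\beta:\beta<\alpha\}$ exactly by continuity of the chain. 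No control of Skolem-hull growth is needed beyond this closure argument. With these cosmetic repairs the proof is complete and matches Foreman's original argument in structure: chain plus diagonal intersection for (1), the ``first index of entry'' set $D$ for the forward direction of (2), and a diagonal union against a fresh index $\xi^*\notin N$ for the converse.
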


It is well-known that saturated ideals produce well-founded generic ultrapowers, which are moreover highly closed from the perspective of the generic extension, and that  these properties can be obtained from the following weaker notion.
A normal ideal $\calI$ on a regular cardinal $\kappa$ is called \emph{presaturated} when for 
all $\calI$-positive $B$, and all collections of antichains $\la \calA_i : i < \kappa \ra$, there is an $\calI$-positive $C \subseteq B$ such that $|\{ a \in \calA_i : a \cap C \in \calI^+ \}| \leq \kappa$ for all $i<\kappa$.
It is not hard to show that forcing with a presaturated ideal on $\kappa$ preserves $\kappa^+$.  Thus if $\kappa$ is a successor cardinal, any  generic embedding arising from such an ideal must send $\kappa$ to $(\kappa^+)^V$.  

Woodin \cite{woodin} calls a normal ideal $\calI$ on a successor cardinal $\kappa$ \emph{weakly presaturated} just when the above property occurs, i.e.\ $\Vdash_{\p(\kappa)/\calI} j_{\dot G}(\kappa) = (\kappa^+)^V$.  This can be explicated in terms of \emph{canonical functions}.
Recall that the canonical functions on a regular cardinal $\kappa$ represent ordinals $\alpha<\kappa^+$ in any generic ultrapower coming from a normal ideal on $\kappa$, and they can be defined (up to equivalence modulo clubs) as follows:  for $\alpha<\kappa^+$, pick any surjection $\sigma_\alpha : \kappa \to \alpha$, and define the $\alpha^{th}$ canonical function $\psi_\alpha$ by $\psi_\alpha(\beta) = \ot(\sigma_\alpha[\beta])$ for all $\beta < \kappa$.

\begin{lemma}
\label{wps-equivs}
Suppose $\calI$ is a normal ideal on a successor cardinal $\kappa$.  The following are equivalent:
\begin{enumerate}
\item\label{wps-genult} $\calI$ is weakly presaturated.
\item\label{wps-dense} For all $\calI$-positive $A \subseteq \kappa$ and all $f : \kappa \to \kappa$, there is $\alpha<\kappa^+$ and an $\calI$-positive $B \subseteq A$ such that $f(\beta) < \psi_\alpha(\beta)$ for all $\beta \in B$.
\item\label{otdom} There is $\theta > \kappa$ and a stationary set $\calS \subseteq \{ M \prec H_\theta : M \cap \kappa \in \kappa \}$ such that:
\begin{enumerate}
\item $\calI$ is the canonical projection of $\ns \rest \calS$.
\item For all $A \in \calI^+$ and all $f : \kappa \to \kappa$, the set
$$\{ M \in \calS : M \cap \kappa \in A \wedge f(M \cap \kappa) < \ot(M \cap \kappa^+)  \}$$
is stationary.
\end{enumerate}
\end{enumerate}
\end{lemma}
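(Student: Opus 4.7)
The plan is to prove (1) $\Leftrightarrow$ (2) by a direct translation between the forcing language and pointwise statements about canonical functions, and then to prove (2) $\Leftrightarrow$ (3) via Foreman's theorem together with Fodor's lemma for stationary sets of models. Throughout, I fix a regular $\theta > 2^\kappa$ and a definable well-order $<^*$ of $H_\theta$, and take $\sigma_\alpha$ to be the $<^*$-least surjection $\kappa \to \alpha$ for each $\alpha < \kappa^+$. This ensures $\sigma_\alpha \in M$ whenever $\alpha \in M \prec H_\theta$; elementarity and closure under $\sigma_\alpha$ then force $\sigma_\alpha[M \cap \kappa] = M \cap \alpha$, so $\psi_\alpha(M \cap \kappa) = \ot(M \cap \alpha)$ for every such $M$.

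For (1) $\Leftrightarrow$ (2): the canonical functions give a strictly increasing sequence of order type $(\kappa^+)^V$ below $j_{\dot G}(\kappa)$ in the generic ultrapower, while every ordinal below $j_{\dot G}(\kappa)$ is represented by some $f : \kappa \to \kappa$ in $V$. Hence $\Vdash j_{\dot G}(\kappa) = (\kappa^+)^V$ says precisely that the $[\psi_\alpha]_{\dot G}$ are cofinal in the $[f]_{\dot G}$, and via the $\p(\kappa)/\calI$ forcing relation this unpacks to (2), shrinking the promised positive set $B$ if necessary so that the pointwise domination holds everywhere.

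For (2) $\Rightarrow$ (3), take $\calS$ to be the set of $\calI$-good $M \prec H_\theta$. Foreman's theorem provides clause (a) and the stationarity of $\calS$. Given $A \in \calI^+$ and $f : \kappa \to \kappa$, extract $\alpha, B$ via (2); by the canonical projection, $\{M \in \calS : \alpha, f \in M \wedge M \cap \kappa \in B\}$ is stationary, and for any such $M$ we have $f(M \cap \kappa) < \psi_\alpha(M \cap \kappa) = \ot(M \cap \alpha) \leq \ot(M \cap \kappa^+)$, which is clause (b).

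For (3) $\Rightarrow$ (2), which is the main work: let $\calS^* := \{M \in \calS : M \cap \kappa \in A \wedge f(M \cap \kappa) < \ot(M \cap \kappa^+)\}$, stationary by hypothesis. The crucial observation is that for any $M \prec H_\theta$, $M \cap \kappa^+$ has no maximum (since $\beta \in M \Rightarrow \beta+1 \in M$), so $\ot(M \cap \kappa^+)$ is a limit ordinal equal to $\sup\{\ot(M \cap \gamma)+1 : \gamma \in M \cap \kappa^+\}$. Thus for each $M \in \calS^*$ one may choose $\alpha_M \in M \cap \kappa^+$ with $\ot(M \cap \alpha_M) > f(M \cap \kappa)$. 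The map $M \mapsto \alpha_M$ is regressive on $\calS^*$, so Fodor's lemma for stationary sets of models yields a stationary $\calS^{**} \subseteq \calS^*$ on which $\alpha_M$ is constantly $\alpha$. Setting $B := \{M \cap \kappa : M \in \calS^{**}\}$, clause (a) makes $B$ an $\calI$-positive subset of $A$, and $f(\beta) < \psi_\alpha(\beta)$ holds everywhere on $B$. The main obstacle is precisely this limit-ordinal observation and the ensuing pressing-down; once the canonical choice of $\sigma_\alpha$ from a well-order is fixed, the rest is routine unpacking.
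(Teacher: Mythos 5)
Your proof is correct and follows essentially the same route as the paper: (1) $\Leftrightarrow$ (2) by unpacking the forcing language via representing functions, (2) $\Rightarrow$ (3) via Foreman's theorem with $\calI$-good models, and (3) $\Rightarrow$ (2) by pressing down $\alpha_M$ via Fodor. The limit-ordinal observation you highlight in (3) $\Rightarrow$ (2) is simply making explicit what the paper leaves implicit when it applies Fodor's lemma there.
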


\begin{proof}
Suppose (\ref{wps-genult}).  Let $A$ be $\calI$-positive and $f : \kappa \to \kappa$.  $f$ represents some ordinal $<j(\kappa)$, and thus there is some $\calI$-positive $B \subseteq A$ deciding that for some canonical function $\psi_\alpha$, $[f]_{\dot G} < [\psi_\alpha]_{\dot G}$.  This implies that for $\calI$-almost-all $\beta \in B$, $f(\beta) <\psi_\alpha(\beta)$, establishing (\ref{wps-dense}).  On the other hand, if (\ref{wps-dense}) holds, we have that every ordinal of the generic ultrapower $<j(\kappa)$ is represented by a function $f : \kappa \to \kappa$, and the statement says that for such $f$, there is a dense set of conditions in the forcing $\p(\kappa)/\calI$ deciding that $f$ represents an ordinal below $(\kappa^+)^V$, showing (\ref{wps-genult}).

Suppose (\ref{wps-dense}).  By Foreman's theorem, $\calI$ is the canonical projection of $\ns \rest \calG$, where $\calG$ is the set of $\calI$-good $M \prec H_{(2^{\kappa})^+}$.  Suppose $f : \kappa \to \kappa$ and $A$ is $\calI$-positive.  By hypothesis, there is an $\calI$-positive $B \subseteq A$ and an $\alpha<\kappa^+$ such that $\psi_\alpha$ dominates $f$ on $B$.  There are stationary-many $M \in \calG$ such that $M \cap \kappa \in B$.  
Let $\sigma_\alpha : \kappa \to \alpha$ be a surjection defining $\psi_\alpha$.  
If $M$ is any such model with $\sigma_\alpha \in M$, then 
$$\psi_\alpha(M \cap \kappa) = \ot(\sigma_\alpha[M \cap \kappa]) = \ot(M \cap \alpha) < \ot(M \cap \kappa^+).$$
Thus there are stationary-many $M \in \calG$ with $M \cap \kappa \in A$ and $f(M \cap \kappa) < \psi_\alpha(M \cap \kappa) < \ot(M \cap \kappa^+)$, establishing (\ref{otdom}).

Suppose (\ref{otdom}).  Let $A$ be $\calI$-positive and $f : \kappa \to \kappa$.  There are stationary-many $M \in \calS$ such that $M\cap \kappa \in A$ and $f(M \cap \kappa) < \ot(M \cap \kappa^+)$.  By Fodor's Lemma, there is an $\alpha<\kappa^+$ and a stationary $\calT \subseteq \calS$ such that for all $M \in \calT$, $\alpha \in M$, $M \cap \kappa \in A$, and $f(M \cap \kappa) < \ot(M \cap \alpha)$.  We may assume that there is a surjection $\sigma_\alpha : \kappa \to \alpha$ that belongs to each $M \in \calT$.  For all $M \in \calT$, $f(M \cap \kappa) < \ot(M \cap \alpha) = \ot(\sigma_\alpha[M \cap \kappa]) = \psi_\alpha(M \cap \kappa)$.  Let $B = \{ M \cap \kappa : M \in \calT \}$.  Then $B$ is an $\calI$-positive subset of $A$, and $\psi_\alpha$ dominates $f$ on $B$.  This shows (\ref{wps-dense}).
\end{proof}

Chang's Conjecture $(\kappa_1,\kappa_0) \chang (\mu_1,\mu_0)$ states that for every function $F : \kappa_1^{<\omega} \to \kappa_1$, there is a $z \subseteq \kappa_1$ closed under $F$ such that $|z| = \mu_1$ and $|z \cap \kappa_0| = \mu_0$.  If $\kappa = \mu^+$, then we abbreviate $(\kappa^+,\kappa) \chang (\mu^+,\mu)$ by $\cc(\kappa)$, which is equivalent to the statement that for all $F : (\kappa^+)^{<\omega} \to \kappa^+$, there is $z \subseteq \kappa^+$ such that $z \cap \kappa \in \kappa$ and $\ot(z) = \kappa$ (see \cite{foreman}). 
 According to \cite{dl}, Magidor showed that if $\kappa = \mu^+$ and $\cc(\kappa)$ holds, then there is a weakly presaturated ideal on $\kappa$. (Note that the dual of a weakly presaturated ideal is called a ``flat filter'' there.)  In fact, such an ideal is obtained as the canonical projection of the nonstationary ideal on $\p(\kappa^+)$ restricted to the set $\{ z : z \cap \kappa \in \kappa \wedge \ot(z) = \kappa \}$.  A result of Foreman and Magidor \cite{fm} shows that all but nonstationary-many $z$ in this set satisfy $\cf(z \cap \kappa) = \cf(\mu)$.

The \emph{weak Chang's Conjecture} at a regular cardinal $\kappa$ says: for all functions $F : (\kappa^+)^{<\omega} \to \kappa^+$, there is $\alpha<\kappa$ such that for all $\beta<\kappa$, there is $z \subseteq \kappa^+$ closed under $F$ with $z \cap \kappa = \alpha$ and $\ot(z) >\beta$.
  To further specify this kind of property, let us write $\wcc(\kappa,A)$ for $A \subseteq \kappa$ to assert that such $\alpha$ can be found in $A$.  
  When $\kappa = \mu^+$, the result of Foreman and Magidor mentioned above tells us that $\cc(\kappa)$ implies $\wcc(\kappa,\cof(\mu))$.
  The next result shows that $\wcc(\kappa,A)$ follows from the existence of a weakly presaturated normal ideal $\calI$ on $\kappa$ such that $A \in \calI^d$.

\begin{lemma}
Suppose $\kappa$ is a regular cardinal and $A \subseteq \kappa$.  The following are equivalent:
\begin{enumerate}
\item\label{wcc} $\wcc(\kappa,A)$.
\item\label{wccmod} For all $f : \kappa \to \kappa$, there are stationary-many $M \in \p_\kappa(H_{\kappa^+})$ such that $M \cap \kappa \in A$ and $f(M\cap\kappa) < \ot(M\cap\kappa^+)$.
\item\label{canfunnodom} For all $f : \kappa \to \kappa$, there is $\alpha<\kappa^+$ such that $\{ \beta \in A : f(\beta) < \psi_\alpha(\beta) \}$ is stationary.
\end{enumerate}
\end{lemma}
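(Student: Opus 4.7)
The plan is to prove the cycle $(1) \Rightarrow (2) \Rightarrow (3) \Rightarrow (1)$. The step $(2) \Rightarrow (3)$ is essentially the Fodor-plus-surjection argument already exhibited in the proof of \autoref{wps-equivs}: given $f$, apply (2) to obtain a stationary family $\calS \subseteq \p_\kappa(H_{\kappa^+})$ of models $M$ with $M \cap \kappa \in A$ and $f(M \cap \kappa) < \ot(M \cap \kappa^+)$; for each $M$ choose some $\alpha_M \in M \cap \kappa^+$ with $\ot(M \cap \alpha_M) > f(M \cap \kappa)$; press down by Fodor's Lemma to stabilize $\alpha_M = \alpha$ on a stationary $\calT \subseteq \calS$, further arranging that the surjection $\sigma_\alpha$ lies in each $M \in \calT$. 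Then $\psi_\alpha(M \cap \kappa) = \ot(\sigma_\alpha[M \cap \kappa]) = \ot(M \cap \alpha) > f(M \cap \kappa)$, so $\{ M \cap \kappa : M \in \calT \}$ is a stationary subset of $A$ on which $\psi_\alpha$ dominates $f$.

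For $(1) \Rightarrow (2)$, fix a club $C \subseteq \p_\kappa(H_{\kappa^+})$ and a function $f : \kappa \to \kappa$. Without loss of generality, $C$ contains the elementary submodels of a Skolemized expansion $\mathfrak{A}$ of $(H_{\kappa^+}, \in, f)$ carrying a well-ordering. Encode the restriction of $\sk^{\mathfrak{A}}$ to subsets of $\kappa^+$ as a single function $F : (\kappa^+)^{<\omega} \to \kappa^+$, by listing for each Skolem term $t$ the operation $\vec{\xi} \mapsto t^{\mathfrak{A}}(\vec{\xi})$ at tuples whose $\mathfrak{A}$-value falls in $\kappa^+$; then $z \subseteq \kappa^+$ is closed under $F$ exactly when $\sk^{\mathfrak{A}}(z) \cap \kappa^+ = z$. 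Apply $\wcc(\kappa, A)$ to $F$ with $\beta := f(\alpha)$, obtaining $\alpha \in A$ and $z$ closed under $F$ with $z \cap \kappa = \alpha$ and $\ot(z) > f(\alpha)$. Then $M := \sk^{\mathfrak{A}}(z) \in C \cap \p_\kappa(H_{\kappa^+})$, with $M \cap \kappa = \alpha \in A$ and $\ot(M \cap \kappa^+) = \ot(z) > f(M \cap \kappa)$.

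The main work is $(3) \Rightarrow (1)$, proved contrapositively. Let $F : (\kappa^+)^{<\omega} \to \kappa^+$ witness the failure of $\wcc(\kappa, A)$, and enlarge $F$ to include, uniformly in $\gamma < \kappa^+$, the operation $(\gamma, \xi) \mapsto \sigma_\gamma(\xi)$; since enlarging $F$ only shrinks the class of $F$-closed sets, the enlarged $F$ still witnesses failure. Define $f : \kappa \to \kappa$ by letting $f(\alpha) < \kappa$ bound $\ot(z)$ over all $z$ closed under $F$ with $z \cap \kappa = \alpha$, for $\alpha \in A$. Assume for a contradiction that (3) supplies some $\alpha^* < \kappa^+$ with $B := \{ \beta \in A : f(\beta) < \psi_{\alpha^*}(\beta) \}$ stationary. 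A standard continuity-and-regularity argument shows that the set $D \subseteq \kappa$ of $\beta$ for which the closure $z_\beta$ of $\beta \cup \{ \alpha^* \}$ under $F$ satisfies $z_\beta \cap \kappa = \beta$ is a club in $\kappa$. For any $\beta \in B \cap D$, the inclusion $\sigma_{\alpha^*}[\beta] \subseteq z_\beta$ yields $\ot(z_\beta) \geq \psi_{\alpha^*}(\beta) > f(\beta)$, contradicting the defining bound on $f(\beta)$.

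The principal obstacles are (i) that the $\sigma_\gamma$'s must be built into $F$ uniformly, before $\alpha^*$ is known, handled by packing them all in at the outset while observing that the failure of $\wcc$ persists under shrinking the class of closed sets, and (ii) that the closure $z_\beta$ could a priori intersect $\kappa$ above $\beta$, handled by restricting to the club $D$.
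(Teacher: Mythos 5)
Your proof is correct and establishes the same cycle $(1)\Rightarrow(2)\Rightarrow(3)\Rightarrow(1)$, but the arguments for two of the three implications differ from the paper's. For $(2)\Rightarrow(3)$ you give a direct pressing-down argument (pick $\alpha_M$ in each model, apply Fodor's Lemma, trap the witness surjection), essentially recycling the paper's $(3)\Rightarrow(2)$ argument from Lemma~\ref{wps-equivs}; the paper instead argues by contradiction, taking a function $f$ that dominates every canonical function modulo clubs on $A$, pulling $f$ and a well-order into a structure $\frak A$ on $H_{\kappa^+}$, and using elementarity to find a club $C$ whose member $M\cap\kappa$ yields the contradiction. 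For $(3)\Rightarrow(1)$ both you and the paper argue contrapositively and define the same bounding function $f$, but you avoid elementary submodels entirely: you absorb the maps $\sigma_\gamma$ into $F$, form the Skolem closure $z_\beta$ of $\beta\cup\{\alpha^*\}$, and cut down to the club $D$ where $z_\beta\cap\kappa=\beta$, so that the $\sigma_{\alpha^*}$-image of $\beta$ forces $\ot(z_\beta)>f(\beta)$ and contradicts the bound. The paper instead lets an elementary $M\prec\frak A$ with $M\cap\kappa\in B$ and $\alpha,f\in M$ do all of this work automatically, since $M\cap\kappa^+$ is $F$-closed with order type $\ot(M\cap\kappa^+)\geq\psi_\alpha(M\cap\kappa)>f(M\cap\kappa)$. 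Your closure-argument route is more hands-on and needs the two extra observations you flagged (packing in the $\sigma_\gamma$'s before knowing $\alpha^*$, and restricting to the club $D$), while the paper's route is shorter because the already-deployed elementary-submodel machinery swallows both points invisibly. Both are sound; the content is equivalent.
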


\begin{proof}
Suppose $\wcc(\kappa,A)$ holds and $f : \kappa \to \kappa$.  Let $\frak A$ be any structure on $H_{\kappa^+}$ in a countable language with a well-order.
If $F : (\kappa^+)^{<\omega}\to\kappa^+$ is the result of restricting inputs and outputs of Skolem functions for $\frak A$ to ordinals, then every $z$ closed under $F$ has $\sk^{\frak A}(z) \cap\kappa^+ = z$.  By $\wcc(\kappa,A)$, there is $M \prec \frak A$ such that $M \cap 
\kappa \in A$ and $f(M \cap \kappa)<\ot(M \cap \kappa^+)$.  Thus (\ref{wcc}) implies (\ref{wccmod}).

Suppose towards a contradiction that  (\ref{wccmod}) holds and (\ref{canfunnodom}) fails.  Let $\lhd$ be a well-order of $H_{\kappa^+}$, and assume that each canonical function $\psi_\alpha$ on $\kappa$ is defined using the $\lhd$-least surjection $\sigma : \kappa \to \alpha$.  Let $f : \kappa \to \kappa$ be such that for all $\alpha<\kappa^+$, $\psi_\alpha(\beta) < f(\beta)$ for all but nonstationary-many $\beta \in A$.  Let $\frak A = \la H_{\kappa^+},\in,\lhd,f,A \ra$.  Let $M \prec \frak A$ be such that $M \cap \kappa \in A$ and $f(M \cap \kappa)<\ot(M \cap \kappa^+)$.  Let $\alpha \in M$ be such that $f(M \cap \kappa)<\ot(M \cap \alpha)$.  Let $\sigma : \kappa \to \alpha$ be the $\lhd$-least surjection, so $\sigma \in M$ and $\psi_\alpha(\beta) = \ot(\sigma[\beta])$ for all $\beta<\kappa$. By elementarity, there is a club $C \subseteq \kappa$ in $M$ such that $\psi_\alpha(\beta) < f(\beta)$ for all $\beta \in C\cap A$.  But clearly $M \cap \kappa \in C \cap A$ and $\psi_\alpha(M \cap \kappa) = \ot(M \cap \alpha)>f(M \cap \kappa)$, a contradiction.

Suppose towards a contradiction that (\ref{canfunnodom}) holds but $\wcc(\kappa,A)$ fails.  Let $F : (\kappa^+)^{<\omega} \to \kappa^+$ be such that for all $\alpha \in A$, the set $\{ \ot(z) : z \cap \kappa = \alpha \wedge F[z^{<\omega}] \subseteq z \}$ is bounded below $\kappa$.  Let $f : A \to \kappa$ return such a bound.  Let $\frak A$ be a structure on $H_{\kappa^+}$ in a countable language such that if $M \prec \frak A$, then $M \cap \kappa^+$ is closed under $F$.  By (\ref{canfunnodom}), let $\alpha<\kappa^+$ be such that $B = \{ \beta \in A : f(\beta) < \psi_\alpha(\beta) \}$ is stationary.  Let $M \prec \frak A$ be such that $M \cap \kappa \in B$ and $\alpha,f \in M$.  Then $f(M \cap \kappa)<\psi_\alpha(M \cap \kappa) = \ot(M \cap \alpha)$, contrary to the definition of $f$.
\end{proof}

\begin{corollary}
\label{wps-wcc}
Suppose $\kappa$ is a regular cardinal and $A \subseteq \kappa$ is stationary.  The following are equivalent.
\begin{enumerate}
\item $\ns_\kappa \rest A$ is weakly presaturated.
\item $\wcc(\kappa,B)$ holds for every stationary $B \subseteq A$.
\end{enumerate}
\end{corollary}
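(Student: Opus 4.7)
The plan is to derive this corollary by combining the two equivalences already established: the characterization of weak presaturation in Lemma \ref{wps-equivs}, specifically clause (\ref{wps-dense}) about canonical functions dominating on positive sets, with clause (\ref{canfunnodom}) of the preceding unnumbered lemma characterizing $\wcc(\kappa,B)$. Let me write $\calI = \ns_\kappa \rest A$, and note that since $A$ is stationary, a set is $\calI$-positive exactly when it is a stationary subset of $A$ (modulo a nonstationary error).

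For the direction (1) $\Rightarrow$ (2), I would fix a stationary $B \subseteq A$ and a function $f : \kappa \to \kappa$, and verify condition (\ref{canfunnodom}) of the preceding lemma for $B$. Since $B$ is $\calI$-positive, Lemma \ref{wps-equivs}(\ref{wps-dense}) applied to $B$ and $f$ yields some $\alpha < \kappa^+$ and an $\calI$-positive $B' \subseteq B$ on which $f(\beta) < \psi_\alpha(\beta)$. Because $B'$ is $\calI$-positive and contained in $B$, it is in particular a stationary subset of $B$, so $\{\beta \in B : f(\beta) < \psi_\alpha(\beta)\}$ is stationary; hence $\wcc(\kappa,B)$ holds.

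For the direction (2) $\Rightarrow$ (1), I would verify Lemma \ref{wps-equivs}(\ref{wps-dense}) directly. Given an $\calI$-positive $C$ and $f : \kappa \to \kappa$, set $B_0 = C \cap A$, which is a stationary subset of $A$. By hypothesis, $\wcc(\kappa,B_0)$ holds, so clause (\ref{canfunnodom}) of the preceding lemma produces $\alpha < \kappa^+$ such that $B = \{\beta \in B_0 : f(\beta) < \psi_\alpha(\beta)\}$ is stationary. Then $B \subseteq C$ is $\calI$-positive and $\psi_\alpha$ dominates $f$ everywhere on $B$, as required.

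There is no real obstacle here; the content of the corollary lies entirely in the preceding two lemmas, and the only nontrivial observation is the trivial identification of $\calI$-positive sets with stationary subsets of $A$, together with the fact that both characterizations are phrased using the same object, namely a canonical function dominating $f$ on a stationary subset of the positive/distinguished set. The proof should therefore occupy no more than a short paragraph per direction.
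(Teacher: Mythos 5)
Your proof is correct and follows exactly the route the paper leaves implicit (the corollary is stated without proof): identify $\calI$-positive sets with stationary subsets of $A$, then translate between Lemma~\ref{wps-equivs}(\ref{wps-dense}) and clause~(\ref{canfunnodom}) of the preceding lemma, both of which speak of a canonical function dominating $f$ on a stationary set. Both directions go through as you describe, so there is nothing to add.
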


\section{Chang's Conjecture and coherent sequences}
\label{sec:ccsquare}

In this section, we discuss several ways in which versions of Chang's Conjecture can lead to non-compact structures, expanding on the theme of \cite{coxeskew}.  Let us first recall some basic definitions and facts.  For a cardinal $\kappa$, Jensen's \emph{weak square principle} $\square^*_\kappa$ says that there is a sequence $\la \calC_\alpha : \alpha < \kappa^+ \ra$ such that each $\calC_\alpha$ is a set of at most $\kappa$-many club subsets of $\alpha$, each of order-type at most $\kappa$, and if $C \in \calC_\alpha$ and $\beta$ is a limit point of $C$, then $C \cap \beta \in \calC_\beta$.

These principles are closely connected to properties of trees.
If $\kappa$ is a regular cardinal, $T$ is a \emph{$\kappa$-tree} when it is a partially ordered set, well-ordered below any given node, such that the rank of any given node is ${<}\kappa$, and if $T_\alpha$ is the set of nodes of rank $\alpha$, then $|T_\alpha|<\kappa$.  A $\kappa$-tree is \emph{Aronszajn} if it has no cofinal branch, i.e.\ a linearly ordered subset of order-type $\kappa$.  If $\kappa = \mu^+$, a $\kappa$-tree $T$ is \emph{special} if there is a function $f : T \to \mu$ such that if $a <_T b$, then $f(a) \not= f(b)$.  Clearly, special trees are Aronszajn.  Jensen \cite{jensen} showed that $\square^*_\kappa$ is equivalent to the existence of a special $\kappa^+$-tree.  

\begin{theorem}
\label{wccsquare}
Suppose $\mu$ is a regular cardinal, $2^{<\mu} \leq \mu^+$, and $\wcc(\mu^+,\cof(\mu))$.  Then $\square^*_\mu$ holds.
\end{theorem}

\begin{proof}
We can assume that $\mu$ is uncountable, since $\square^*_\omega$ always holds.
Let $\kappa = \mu^+$, let $\lhd$ be a well-order of $H_{\kappa^+}$, and let $\frak A = (H_{\kappa^+},\in,\lhd)$.
By $\wcc(\kappa,\cof(\mu))$, let $\delta<\kappa$ be such that $\cf(\delta) = \mu$ and for every $\alpha < \kappa$, there is $M \prec \frak A$ such that $M \cap \kappa = \delta$ and $\ot(M \cap \kappa^+) > \alpha$.
Let $\la x_\alpha : \alpha < \kappa \ra$ be the $\lhd$-least enumeration of $[\kappa]^{<\mu}$.  Let $Q_0 = \{ x_\alpha : \alpha < \delta \}$.  For each $M \prec \frak A$ with $M \cap \kappa = \delta$, $M \cap [\kappa]^{<\mu} = Q_0 \subseteq [\delta]^{<\mu}$.

\begin{claim}
$Q_0$ is $\subseteq$-cofinal in $[\delta]^{<\mu}$.
\end{claim}

\begin{proof}[Proof of claim]
For $\alpha < \kappa$ let $\sigma_\alpha$ be the $\lhd$-least surjection from $\mu$ to $\alpha$.  Then for each $\alpha<\delta$ and each $M \prec \frak A$ with $M \cap \kappa \geq \delta$, $\sigma_\alpha \in M$.
If $x \in [\delta]^{<\mu}$, then since $\cf(\delta) = \mu$ and $|x|<\mu$, there is $\alpha<\delta$ such that $x \subseteq \alpha$, and there is $\beta<\mu$ such that $x \subseteq \sigma_\alpha[\beta] \in Q_0$.
\end{proof}

Let $\calE$ be the set of transitive structures $N \in H_\kappa$ such that $N$ is elementarily equivalent to $H_{\kappa^+}$, $N$ is correct about cardinals $\leq \mu$, $\delta = (\mu^+)^N$, $\delta$ is the largest cardinal in $N$, and $N \cap [\delta]^{<\mu} = Q_0$.
By taking transitive collapses of $M \prec \frak A$ with $M \cap \kappa = \delta$, we have that for every $\alpha<\kappa$, there is an $N \in \calE$ with $\alpha \in N$.

Suppose $N \in \calE$ and $\alpha \in N \cap \kappa$.  Then there is a surjection $f : \delta \to \alpha$ in $N$, and $f$ is coded by a set $X \subseteq \delta$ in $N$.  More specifically, $X$ codes, via G\"odel pairing, a prewellordering of $\delta$ of length $\alpha$, which is inter-constructible with the surjection $f$.  $X$ has the property that whenever $z \in Q_0$, then $X \cap z \in Q_0$, since $X \cap z \in ([\delta]^{<\mu})^N = Q_0$.  Following the terminology of \cite{coxeskew}, we say  that \emph{$X$ is approximated by $Q_0$}.  

Let $N^* = \sk^{\frak A}(\delta +1)$.
Let $Q_1 =  [\delta]^{<\mu} \cap N^* \supseteq Q_0$.
Let $C^* \in N^*$ be a club in $\delta$ of order-type $\mu$.  We may assume that only limit ordinals are in $C^*$.

\begin{claim}
\label{canonical}
Suppose $\alpha <\kappa$, and $X,Y$ are two subsets of $\delta$ that are approximated by $Q_0$ and code prewellorderings of $\delta$ of length $\alpha$.  Let $f_X,f_Y$ be the corresponding surjections from $\delta$ to $\alpha$.  Then:
$$\{ f_X[z] : z \in Q_1 \} = \{ f_Y[z] : z \in Q_1 \}$$
\end{claim}

\begin{proof}[Proof of claim]
Let $r \in Q_1$ be arbitrary.  We must find $s \in Q_1$ such that $f_X[r] = f_Y[s]$.
Let $\sigma : \mu \to \delta$ be a surjection in $N^*$. 
There is a club $C \subseteq \mu$ such that for all $\beta \in C$, $f_X \circ \sigma[\beta] = f_Y \circ \sigma[\beta]$ and $\sigma[\beta]$ is closed under G\"odel pairing.  Let $\gamma \in C$ be such that $r \subseteq \sigma[\gamma] = z$.  Let $z' \in Q_0$ be such that $z \subseteq z'$.
Then $X \cap z'$ and $Y \cap z'$ are in $Q_0$, and thus $X \cap z = (X\cap z')\cap z$ and $Y \cap z = (Y\cap z')\cap z$ are in $Q_1$.  
$X \cap z$ and $Y \cap z$ code prewellorderings of $z$ of length $\eta = \ot(f_X[z]) = \ot(f_Y[z]) <\mu$.  Let $g_X$ and $g_Y$ be the corresponding surjections from $z$ to $\eta$.  

Let $r' = g_X[r]$.  Note that if $\pi : \eta \to f_X[z]$ is the unique order-preserving map, then $\pi[r'] = f_X[r]$.  Let $s = g_Y^{-1}[r']$.  Then $s \in Q_1$, and $g_Y[s] = r'$.  Moreover, $f_Y[s] = \pi \circ g_Y[s] = f_X[r]$.
\end{proof}

For $\alpha <\kappa$ of cofinality $<\mu$, let 
$$\calC_\alpha = \{ f[z] : z \in Q_1 \wedge f[z] \text{ is club in } \alpha \},$$
where $f$ is a surjection from $\delta$ to $\alpha$ in some $N \in \calE$.  By Claim \ref{canonical}, this set is the same no matter which such $f$ we choose.  Note that $\calC_\alpha$ has size at most $|Q_1| = \mu$.  Furthermore, this set is nonempty for each such $\alpha$.  To see this, it suffices to show that whenever $\alpha \in N \in \calE$, then $N \models \cf(\alpha)<\mu$.  If this were to fail, then such an $N$ would have an increasing cofinal sequence in $\alpha$ of length either $\mu$ or $\delta$.  But thus is impossible, since $\mu$ is regular and $\cf(\delta) = \mu$.

For $\alpha<\kappa$ of cofinality $\mu$, there are two cases.  In the first case, there is some $N \in \calE$ with $\alpha \in N$ such that $N \models \cf(\alpha) = \mu$.  Choose such a model $N_\alpha$ and a club $C_\alpha \subseteq \alpha$ in $N_\alpha$ with $\ot(C_\alpha) = \mu$, and let $\calC_\alpha = \{ C_\alpha \}$.  In the second case, all $N \in \calE$ with $\alpha \in N$ satisfy that $\cf(\alpha) = \delta$.  Choose such an $N_\alpha$ and choose a club $D_\alpha \subseteq \alpha$ in $N_\alpha$ such that $\ot(D_\alpha) = \delta$.  Let $\la \xi_i : i < \delta \ra$ be its increasing enumeration.  Then let $C_\alpha = \{ \xi_i : i \in C^* \}$, and let $\calC_\alpha = \{ C_\alpha \}$.

We want to show that the sequence $\la \calC_\alpha : \alpha < \kappa \ra$ is coherent.  Suppose $\cf(\alpha)<\mu$, $C \in \calC_\alpha$, and $\beta$ is a limit point of $C$.  Let $N \in \calE$ be such that $\alpha \in N$, and let $f : \delta \to \alpha$ be a surjection in $N$ with the property that $C = f[z]$ for some $z \in Q_1$.  Let $g : \delta \to \beta$ be the surjection in $N$ defined by putting $g(\gamma) = f(\gamma)$ when $f(\gamma)<\beta$, and otherwise letting $g(\gamma)$ be the least point of $C$.  Then $C \cap \beta = g[z] \in \calC_\beta$.

Suppose that $\alpha<\kappa$ and some $N \in \calE$ satisfies that $\cf(\alpha) = \mu$.  Let $N_\alpha,C_\alpha$ be as in the construction of $\calC_\alpha$.  Let $f : \delta \to \alpha$ be a surjection in $N_\alpha$ with the property that $f \rest \mu$ enumerates $C_\alpha$ in increasing order.  Then for any limit point $\beta$ of $C_\alpha$, there is $\gamma<\mu$ such that $C_\alpha \cap \beta = f[\gamma]$.  If $g : \delta \to \beta$ is a surjection in $N_\alpha$ defined from $f$ as in the previous case, then $g[\gamma] = C_\alpha \cap \beta$, and of course $\gamma \in Q_1$, so $C_\alpha \cap \beta \in \calC_\beta$.

Suppose that $\alpha<\kappa$ and all $N \in \calE$ with $\alpha \in N$ satisfy that $\cf(\alpha) = \delta$.  Let $N_\alpha,D_\alpha,C_\alpha$ be as in the construction of $\calC_\alpha$.  Let $f : \delta \to \alpha$ be a surjection in $N_\alpha$ with the property that $f$ restricted to the limit ordinals enumerates $D_\alpha$ in increasing order.  Let $\beta$ be a limit point of $C_\alpha$.  Then there is some $\gamma < \delta$ such that $f[C^* \cap \gamma] = C_\alpha \cap \beta$.  If $g : \delta \to \beta$ is a surjection in $N_\alpha$ defined from $f$ as in the previous cases, then $g[C^* \cap \gamma] = C_\alpha \cap \beta$.  Since $C^* \cap \gamma \in Q_1$, $C_\alpha \cap \beta \in \calC_\beta$.
\end{proof}

Suppose $\mu$ is a regular cardinal, $\kappa = \mu^+$, $M \prec H_{\kappa^+}$, $M \cap \kappa = \delta < \kappa$, and $\cf(\delta) = \mu$.  For any $\alpha \in M \cap \kappa^+$, if $\alpha \not= \sup(M \cap \alpha)$, then $\alpha$ must have cofinality $\kappa$, since otherwise $\cf(\alpha) \subseteq M$, and so there would be a cofinal subset of $\alpha$ of order-type $\cf(\alpha)$ that is both an element and a subset of $M$.  If $f : \kappa \to \alpha$ is an increasing cofinal function in $M$, then $f \rest \delta$ is cofinal in $M \cap \alpha$, and so $\cf(M \cap \alpha) = \mu$.  Thus if $g : \ot(M\cap\kappa^+)\to M\cap\kappa^+$ is the increasing enumeration, then the discontinuity points of $g$ all have cofinality $\mu$.

If $\cc(\kappa)$ holds, then stationary-many $M \prec H_{\kappa^+}$ have $M \cap \kappa = \delta <\kappa$ with $\cf(\delta) =\mu$, and $\ot(M \cap \kappa^+) = \kappa$.  Thus $M \cap \kappa^+$ is a ${<}\mu$-closed set of ordinals.  Such $M$ will have some discontinuity points of cofinality $\mu$, but it is natural to ask whether there might be a club in $\sup(M \cap \kappa^+)$ that avoids them.  Let $\ccc(\kappa)$ stand for the assertion that there are stationary-many such models.

\begin{prop}
\label{ccc}
For regular $\mu$,
$\ccc(\mu^+)$ implies $\square^*_\mu$.
\end{prop}

\begin{proof}
Let $\kappa = \mu^+$.
It follows from a result of Shelah \cite{shelah91} that $\kappa^+$ carries a ``partial weak sqaure,'' i.e.\ a sequence $\vec\calC = \la \calC_\alpha : \alpha \in \kappa^+ \cap \cof({<}\kappa) \ra$ such that each $\calC_\alpha$ is a set of at most $\kappa$-many clubs in $\alpha$, each of order-type $\leq\mu$, and if $C \in \calC_\alpha$ and $\beta \in \lim C$, then $C \cap \beta \in \calC_\beta$.
Let $\frak A = \la H_{\kappa^+},\in,\vec\calC \ra$.  Let $M \prec \frak A$ be such that $\ot(M \cap \kappa^+) = \kappa$, $M \cap \kappa = \delta < \kappa$, and $M \cap \kappa^+$ contains a club $D \subseteq \sup(M \cap \kappa^+)$.  We may assume that $D$ possesses only ordinals of cofinality ${<}\kappa$.
Let $\pi : M \to N$ be the transitive collapse.  Let $\bar x = \pi(x)$ for each $x \in M$, and let $\bar D = \pi[D]$.

For each $\alpha \in \bar D$, $\bar{\calC}_\alpha$ is defined, and $\pi(C) = \pi[C]$ for each $C \in \calC_\alpha$.  Thus $\bar{\calC}_\alpha$ is a set of at most $\mu$-many clubs in $\alpha$, each of order-type at most $\mu$.  To complete this to a $\square^*_\mu$-sequence, we only need to fill in the gaps at ordinals in $\kappa \setminus \bar D$.  But it is a standard fact, easy to show by induction, that for each $\eta<\kappa$, there exists a ``short square sequence'' of length $\eta$, i.e.\ a sequence $\la C_\alpha : \alpha < \eta \ra$ such that each $C_\alpha$ is a club in $\alpha$ of order-type at most $\mu$, and if $\beta \in \lim C_\alpha$, then $C_\alpha \cap \beta = C_\beta$.  Thus for $\alpha \in \bar D$, choose a short square sequence $\la C_\beta : \alpha < \beta < \alpha' \ra$, where $\alpha'$ is the next point of $\bar D$ above $\alpha$, and each $C_\beta$ is contained in $[\alpha,\beta)$.  Then putting $\calD_\alpha = \bar\calC_\alpha$ for $\alpha \in \bar D$ and $\calD_\alpha = \{ C_\alpha \}$ for $\alpha \in \kappa \setminus \bar D$, we have that $\la \calD_\alpha : \alpha < \kappa \ra$ is a $\square^*_\mu$-sequence.
\end{proof}

A weakening of the principle $\square^*_\kappa$ is the assertion that $\kappa^+$ is \emph{approachable}, a notion due to Shelah \cite{shelah78}.  More generally, we say that an ordinal $\alpha<\kappa^+$ is \emph{approachable with respect to a sequence} $\la x_\beta : \beta < \kappa^+ \ra$ if there is a cofinal set $y \subseteq \alpha$ of order-type $\cf(\alpha)$ such that all initial segments are in $\{ x_\beta : \beta<\alpha \}$, and we say that
 a set $S \subseteq \kappa^+$ is \emph{approachable} when there is a sequence $\la x_\beta : \beta < \kappa^+ \ra$ and a club $C \subseteq \kappa$ such that all $\alpha \in S \cap C$ are approachable with respect to $\la x_\beta : \beta < \kappa^+ \ra$.  Shelah's partial square result \cite{shelah91} shows that if $\kappa$ is regular, then $\kappa^+ \cap \cof({<}\kappa)$ is approachable.  Moreover, the collection of approachable subsets of $\kappa^+$ forms a normal ideal denoted by $I[\kappa^+]$.  We say that $I[\kappa^+]$ is \emph{trivial} when it is as small as possible under the constraint imposed by Shelah's partial square result, namely $I[\kappa^+] = \ns_{\kappa^+} \rest \cof(\kappa)$.  If $2^{<\kappa} \leq \kappa^+$, then it is not hard to show using an enumeration of $[\kappa^+]^{<\kappa}$ that $I[\kappa^+]$ is nontrivial, so the triviality of $I[\kappa^+]$ requires $2^{<\kappa} > \kappa^+$.  Mitchell \cite{mitchellapp} showed that it is consistent relative to a greatly Mahlo cardinal that $I[\omega_2]$ is trivial.

\begin{prop}
If $\mu$ is regular and uncountable and $\cc(\mu^+)$ holds, then $I[\mu^{++}]$ is nontrivial.
\end{prop}

\begin{proof}
Let $\kappa = \mu^+$.  Let $\frak A = \la H_{\kappa^+},\in,\lhd \ra$, where $\lhd$ is a well-order of $H_{\kappa^+}$.  Suppose $M$ and $N$ are elementary in $\frak A$, $M \cap \kappa = N \cap \kappa = \delta<\kappa$, $\sup(M \cap \kappa^+) = \sup(N \cap \kappa^+) = \eta$, and $\cf(\delta),\cf(\eta) \geq \mu$.  By the observations preceding Proposition \ref{ccc}, both $M \cap \kappa^+$ and $N \cap \kappa^+$ are ${<}\mu$-closed.  Then $M \cap N \cap \eta$ is an unbounded subset of $\eta$.  If $\alpha \in M \cap N \cap \eta$, then there is a surjection $f : \kappa \to \alpha$ in $M \cap N$.  We have that $f[\delta] = M \cap \alpha = N \cap \alpha$.  As $\alpha$ was arbitrary, it follows that $M \cap \kappa^+ = N \cap \kappa^+$.  Thus for each $\delta<\kappa$ and $\eta<\kappa^+$, each of cofinality $\geq \mu$, there is at most one set $X(\delta,\eta)$ that equals $M \cap \kappa^+$ for some $M \prec \frak A$ with $M \cap \kappa = \delta$ and $\sup(M \cap \kappa^+) = \eta$.  Let $\la x_\alpha : \alpha < \kappa^+ \ra$ enumerate all initial segments of such $X(\delta,\eta)$.

Now let $C \subseteq \kappa^+$ be a club.  Let $C' \subseteq C$ be a club such that whenever $\alpha \in C'$, $\delta,\eta<\alpha$, and $X(\delta,\eta)$ is defined, then $X(\delta,\eta)$ and all its initial segments are in $\{ x_\beta : \beta < \alpha \}$.  Expand $\frak A$ to $\frak A'$ to include a constant for $C'$.  By $\cc(\kappa)$, let $M \prec \frak A'$ be such that $M \cap \kappa = \delta < \kappa$ and $\ot(M \cap \kappa^+) = \kappa$.  Then $\sup(M \cap \kappa^+) \in C'$.   By Foreman-Magidor \cite{fm}, $\cf(\delta) =\mu$.   For all $\eta \in M \cap \kappa^+$ of cofinality $\geq \mu$ such that $\sk^{\frak A}(M \cap \eta) \cap \eta = M \cap \eta$, we have $M \cap \eta = X(\delta,\eta)$.  Thus $M \cap \kappa^+$ has the property that all initial segments are in $\{ x_\alpha : \alpha < \sup(M \cap \kappa^+) \}$.  This shows that there is an ordinal in $C$ that is approachable with respect to the sequence $\la x_\alpha : \alpha < \kappa^+ \ra$.
\end{proof}

The above result cannot be strengthened much further, since in general, $\cc(\kappa)$ does not imply that the full $\kappa^+$ is approachable when $\kappa$ is the successor of a regular cardinal.  We give a sketch of the consistency proof for the reader who is familiar with both Mitchell's forcing for the tree property \cite{mitchell} and the Kunen-style forcings for saturated ideals and Chang's Conjecture \cite{kunen}.

\begin{prop}
If $\kappa$ is huge and $\mu<\kappa$ is regular, then there is a forcing extension in which $\kappa = \mu^+$, $\cc(\kappa)$ holds, and $\kappa^+$ is not approachable.
\end{prop}

\begin{proof}[Proof (sketch)]
We first define a variant of Mitchell's forcing using Easton supports.  If $\mu$ is a regular cardinal and $\kappa > \mu$ is inaccessible, then the standard Mitchell forcing to make $\kappa = \mu^{++}$ consists of pairs $\la p,q \ra$, where:
\begin{enumerate}
\item $p \in \add(\mu,\kappa)$.
\item\label{supp} $q$ is a function with domain in $[\kappa]^{<\mu^+}$.
\item for each $\alpha \in \dom(q)$, $q(\alpha)$ is a name for a condition in $\col(\mu^+,\alpha)$, as defined in $V^{\add(\mu,\alpha)}$.
\end{enumerate}
We put $\la p_1,q_1 \ra \leq \la p_0,q_0 \ra$ when $p_1 \leq p_0$ and for each $\alpha\in\dom(q)$, $p_1 \rest \alpha \Vdash q_1(\alpha) \leq q_0(\alpha)$.  Variations on this forcing fix some $A \subseteq \kappa$ and require the domain of the second coordinate to be in $[A]^{<\mu^+}$.  This can change the combinatorial effects of the forcing, such as whether $\kappa$ is forced to be approachable or not (see \cite{8fold}).  In particular, if $\kappa$ is Mahlo and $A$ is the set of inaccessibles below $\kappa$ which are not limits of inaccessibles, then $\kappa \setminus A$ is forced to be non-approachable.  

Now, our modification is simply to require that the second coordinate is function with domain an Easton subset of $A$, rather than a ${\leq}\mu$-sized subset of $A$.  Recall that a set of ordinals $X$ is \emph{Easton} when $\sup(X \cap \alpha) < \alpha$ whenever $\alpha$ is regular.  It is not hard to check that the same arguments of \cite{8fold} work to show that this modification still forces that $\kappa$ is not approachable.  Let us call this forcing $\mathbb P(\mu,\kappa)$.

It is a standard fact, owing in part to the supports of the functions in the second coordinate, that Mitchell's forcing is a projection of $\add(\mu,\kappa) \times \col(\mu^+,{<}\kappa)$.  The same analysis yields that our modified poset is a projection of $\add(\mu,\kappa) \times \mathbb E(\mu^+,\kappa)$, where $\mathbb E(\mu^+,\kappa)$ is the Easton-support product of $\col(\mu^+,\alpha)$ over $\alpha<\kappa$, or the Easton collapse, introduced by Shioya \cite{shioya}.

By the work of the author and Hayut \cite{eh}, if $\kappa$ is huge with target $\theta$ and $\mu<\kappa$ is regular, then the two-step iteration of Easton collapses, $\mathbb E(\mu,\kappa) * \dot{\mathbb E}(\kappa,\theta)$, forces $(\theta,\kappa) \chang (\kappa,\mu)$.  The reason is that, if $G * H \subseteq \mathbb E(\mu,\kappa) * \dot{\mathbb E}(\kappa,\theta)$ is generic, then a hugeness embedding $j : V \to M$, with $\crit(j) =\kappa$, $j(\kappa) = \theta$, and $j[\theta] \in M$, can be lifted by a further forcing, yielding an embedding $j : V[G][H] \to M[G'][H']$.  Then for any $F : [\theta]^{<\omega} \to \theta$ in $V[G][H]$, $j[\theta]$ is a set closed under $j(F)$ with $j[\theta] \cap j(\kappa) = \kappa < j(\kappa)$ and $\ot(j[\theta]) = \theta = j(\kappa)$.  By elementarity, there is a set $X \in V[G][H]$ closed under $F$ with $X \cap \kappa \in \kappa$ and $\ot(X) = \kappa$.  Since $V[G][H] \models \mu^{<\mu} = \mu$, $\add(\mu,\kappa)$ is $\kappa$-c.c.\ in this model.  Thus by standard arguments, we can lift the embedding further through any $K \subseteq \add(\mu,\kappa)$ that is generic over $V[G][H]$, and $V[G][H][K]$ will satisfy $(\theta,\kappa) \chang (\kappa,\mu)$ for the same reason.

Let $G * Q \subseteq \mathbb E(\mu,\kappa) * \dot{\mathbb P}(\mu,\theta)$ be generic.  In $V[G][Q]$, $\kappa = \mu^+$, $\theta = \kappa^+$, and $\theta$ is not approachable.  Then force with the quotient $(\add(\mu,\theta) \times \mathbb E(\kappa,\theta)) / Q$, where the product is as defined in $V[G]$, yielding a generic $G * (H \times K)$ for $\mathbb E(\mu,\kappa) * (\dot{\mathbb E}(\kappa,\theta) \times \dot\add(\mu,\theta))$.  A further forcing yields a lifted embedding $j : V[G][H][K] \to M[G'][H'][K']$ with $\crit(j) = \kappa$, $j(\kappa) = \theta$, and $j[\theta] \in M$.  The generic $H' \times K'$ projects to a generic $Q'$ for $j(\mathbb P(\mu,\theta))$, and restricting the map yields an elementary $j : V[G][Q] \to M[G'][Q']$.  The same reflection argument as above then gives that $V[G][Q] \models (\theta,\kappa) \chang (\kappa,\mu)$.
\end{proof}

\section{Modified Neeman forcing}
\label{sec:neeman}

Let us recall the definition of Neeman's model sequence poset \cite{neeman}.  We fix some transitive set $K$ satisfying a sufficient amount of ZFC.  In our applications, $K$ will always be $H_\theta$ for some regular $\theta$.  We fix two classes of elementary submodels of $K$, the small models $\calS$ and the transitive models $\calT$, and a cardinal $\kappa$ with the following properties:
\begin{itemize}
\item $\calS \cup \calT \subseteq K$. 
\item $\kappa+1 \subseteq M$ for all $M \in \calS \cup \calT$.
\item If $M,N \in \calS$ and $M \in N$, then $M \subseteq N$.
\item If $W \in \calT$, $M \in \calS$, and $W \in M$, then $M \cap W \in \calS \cap W$.
\item Each $W \in \calT$ is transitive and ${<}\kappa$-closed.
\end{itemize}
A pair $\la \calS,\calT \ra$ satisfying these conditions is called \emph{appropriate} for $\kappa$ and $K$.
Usually these conditions are implied by defining $\calS$ and $\calT$ such that for some regular cardinal $\lambda\geq\kappa$, every $M \in \calS$ has $M \cap \lambda \in \lambda$ and $|M|<\lambda$, while every $W \in \calT$ is ${<}\lambda$-closed.
The poset $\mathbb P_{\kappa,\calS,\calT}$ consists of sets of models $s \in [\calS \cup \calT]^{<\kappa}$ such that the rank function is injective on $s$, and if $ \la M_i : i < \alpha \ra$ enumerates $s$ by order of rank, then for each $\beta,\gamma<\alpha$:
\begin{itemize}
\item $\{ i < \beta : M_i \in M_\beta \}$ is cofinal in $\beta$.
\item $s \cap M_\beta \in M_\beta$.
\item $M_\beta \cap M_\gamma = M_i$ for some $i<\alpha$.
\end{itemize}
A condition $t$ is stronger than a condition $s$ when $t \supseteq s$.
Neeman also introduced a \emph{decorated} version of this poset, which enforces some continuity of the generic object added.  We will slightly modify these decorations as follows.  We define $\mathbb P^\mathrm{dec*}_{\kappa,\calS,\calT}$ to consist of pairs $\la s,f \ra$ where:
\begin{enumerate}
\item $s \in \mathbb P_{\kappa,\calS,\calT}$.
\item\label{dec*} $f$ is a function with $\dom(f) \in [s \times \kappa]^{<\kappa}$.  If $\la M,\alpha \ra \in \dom(f)$, then $f(M,\alpha) \in M^* \cap W$, where $M^*$ is the successor of $M$ in $s$ if it exists, $M^* = K$ if $M$ is the largest model in $s$, and $W$ is the smallest member of $\calT$ such that $M \in W$.
\item\label{decrest} $f \in K$, and for each $M \in s$, $f \rest M \in M$.
\end{enumerate}
In order for (\ref{dec*}) to make sense, we assume that $\calT$ is linearly ordered by $\in$ and $\subseteq$-cofinal in $K$.
We note that by Claim 2.34 of \cite{neeman}, if $\la s,f \ra$ is a condition and $M \in s$, then $\ran(f \rest M) \subseteq M$, so (\ref{decrest}) is equivalent to saying that $f \cap M \in M$ for all $M \in s$.
We put $\la t,g \ra \leq \la s,f \ra$ when $s \subseteq t$ and $f \subseteq g$.
For a model $M \in \calS\cup\calT$ and a condition $p = \la s,f\ra$, we sometimes write $p \rest M$ for $\la s \cap M, f \cap M \ra$.

Essentially, the only differences between our decorated poset and that of \cite{neeman} are that we consider a smaller class of decorations, namely those ${<}\kappa$-sized sets that are partial functions on $\kappa$, and we don't allow the decorations attached to a model $M$ to jump past the next transitive model above $M$.  One can check that the proof of strong properness for $\calS \cup \calT$, in particular Claim 2.38 of \cite{neeman}, still holds for this modified poset.  (Almost no change to the argument is needed to accommodate our modified decorations.)  Let us state this result:
\begin{lemma}
\label{spdec}
Suppose $\la s,f \ra \in \mathbb P^\mathrm{dec*}_{\kappa,\calS,\calT}$ and $M \in s$.  Then $\la s \cap M, f \cap M \ra$ is a condition, and if $\la t,g \ra \in M$ and $\la t,g \ra \leq \la s\cap M, f \cap M \ra$, then $\la t,g \ra$ is compatible with $\la s,f \ra$.  Furthermore, these conditions have a greatest lower bound $\la r,h \ra$, where $r$ is the closure of $s \cup t$ under intersections, $r \cap M = t$, and $h = f \cup g$.
\end{lemma}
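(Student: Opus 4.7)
The strategy is to reduce the proof to Neeman's Claim 2.38 of \cite{neeman}, which handles the undecorated version, and then verify the decoration clauses separately. Neeman's result directly delivers that $s \cap M$ and the closure $r$ of $s \cup t$ under intersections are both valid conditions in $\mathbb P_{\kappa,\calS,\calT}$, and that $r \cap M = t$. What is left is to handle the partial functions $f \cap M$ and $h = f \cup g$.

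The key steps, in order, would be the following. First, verify that $f \cap M \in M$ using clause (\ref{decrest}) together with the consequence of Claim 2.34 of \cite{neeman} noted in the excerpt, and verify the range clause (\ref{dec*}) for $\langle s \cap M, f \cap M \rangle$ by tracking how $s \cap M$-successors relate to $s$-successors: for each $N \in s \cap M$, the successor of $N$ in $s \cap M$ is either its $s$-successor (when the latter lies in $M$) or the default $K$ (when $N$ is maximal below $M$), and in either case $f(N,\alpha)$ still sits in the required intersection with $W$. Second, check that $h = f \cup g$ is a well-defined function, using $f \cap M \subseteq g$ to resolve agreement on $\dom(f) \cap \dom(g)$. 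Third, verify clause (\ref{decrest}) for $\langle r, h \rangle$ by separately handling models inherited from $s$ and from $t$ and reducing any new intersection models to these cases. Fourth, verify the range clause (\ref{dec*}) for $\langle r, h \rangle$.

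I expect the main obstacle to be the last step. When a model $N \in s$ acquires a new, smaller $r$-successor because an intermediate model from $t$ has been inserted between $N$ and its original $s$-successor, one needs to argue that $f(N,\alpha)$ still lies in the smaller successor intersected with $W$. The resolution should use the appropriateness conditions: inserted intermediate models come from $t \subseteq M$, and the interaction between $\calS$-models and transitive $\calT$-models, together with $f \rest M \in M$, pins the decoration into the correct interval. This is the one substantive extension beyond Neeman's argument, but it is the only one, which is why the excerpt can assert that almost no change to the argument is needed.
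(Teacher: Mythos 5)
Your proposal takes the same approach as the paper, which offers no proof beyond pointing to Neeman's Claim 2.38 and remarking that ``almost no change to the argument is needed''; fleshing out the reduction and verifying the decoration clauses is exactly what that remark calls for.

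One small correction about where the work lies. What you identify as ``the one substantive extension beyond Neeman's argument''---checking that $f(N,\alpha)$ stays inside the possibly smaller $r$-successor of $N$ after the amalgamation---is not actually new: Neeman's decorated poset already ties the decoration attached to $N$ to the successor of $N$ in $s$, so this verification is already part of his Claim 2.38. The genuinely new parts of the modified definition are that decorations are ${<}\kappa$-sized partial functions on $\kappa$ and that $f(N,\alpha)$ must also lie in $W$, the smallest member of $\calT$ containing $N$. Both are preserved automatically under the operations $\la s,f\ra \mapsto \la s\cap M, f\cap M\ra$ and $(\la s,f\ra,\la t,g\ra) \mapsto \la r, f\cup g\ra$: the partial-function property is closed under restriction and under unions of compatible functions (compatibility following from $f\cap M \subseteq g$, as you note), and the constraint $f(N,\alpha)\in W$ refers only to the model $N$ and not to the sequence in which $N$ sits, so it is unaffected by moving from $s$ to $s\cap M$ or to $r$. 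This is precisely why the paper's claim of ``almost no change'' is quite literal; the bulk of the case analysis you anticipate is already carried by Neeman's original argument.
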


When we take a generic filter $G$ for $\mathbb P_{\kappa,\calS,\calT}$ or $\mathbb P^\mathrm{dec*}_{\kappa,\calS,\calT}$, we will say that a model $M$ ``appears in $G$''  to mean, if we are using the undecorated poset, that there is some $s \in G$ with $M \in s$, and if we are using the decorated poset, that there is some $\la s,f \ra \in G$ with $M \in s$.

As noted in \cite{neeman}, if each $M \in \calS$ is ${<}\kappa$-closed then the whole forcing is ${<}\kappa$-closed.  
The key points of these modifications are as follows:
\begin{enumerate}
\item The union of the decorations appearing in a generic that are attached to a model $M$ will be a surjection from $\kappa$ to the successor model $M^*$, which in typical situations must be a small model.
\item If $K = H_\theta$, $\theta^{<\theta} = \theta$, and $\calT$ is sufficiently rich, then the poset satisfies the $\theta$-c.c.  This does not hold for the version of the decorated poset appearing in \cite{neeman}, since, for $\kappa = \omega$, it adds a club subset of $\theta$ that contains no infinite ground model set.
\item 
In typical situations, the sequence of small models appearing between two consecutive transitive models will be continuous at limits of cofinality $\kappa$.
\end{enumerate}

Suppose $K = H_\theta$ for some regular $\theta$.  Then $\theta^{<\theta} = \theta$ if and only if there is a continuous $\in$-increasing sequence of transitive sets $\vec W = \la W_\alpha : \alpha < \theta \ra$ such that each $W_\alpha \in H_\theta$ and $\bigcup_{\alpha<\theta} W_\alpha = H_\theta$.  Let us call such a sequence a \emph{filtration} of $H_\theta$.  
In case $\theta$ is an inaccessible cardinal, then we can simply take $W_\alpha=V_\alpha$.  Otherwise, it will be useful to take $\vec W$ as a predicate and consider collections of elementary submodels of $\la H_\theta,\in,\vec W \ra$.  When $\alpha^{<\kappa} <\theta$ for each $\alpha<\theta$, each $W_\alpha \prec \la H_\theta,\in,\vec W \ra$ with $\cf(\alpha) \geq \kappa$ is ${<}\kappa$-closed.  This is because for any $x \in [W_\alpha]^{<\kappa}$, there is some $\beta<\alpha$ such that $x \subseteq W_\beta$, and $[W_\beta]^{<\kappa} \subseteq W_\alpha$ by elementarity.  Thus the set of such $W_\alpha$'s can be part of an appropriate pair for $\kappa$ and $H_\theta$.

\begin{lemma}
\label{rankmeet}
Suppose $\theta$ is regular, $\vec W = \la W_\alpha : \alpha < \theta \ra$ is a filtration of $H_\theta$, $\frak A = \la H_\theta,\in,\vec W \ra$, $M \prec \frak A$, and $\alpha \in M$.  Then $M \cap W_\alpha \prec \frak A$ if and only if $W_\alpha \prec \frak A$.  Furthermore, if $W_\beta \prec \frak A$ and $\alpha = \min(M \setminus \beta)$, then $W_\alpha \prec \frak A$.
\end{lemma}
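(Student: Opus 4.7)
The plan is to prove the iff first and then use it to deduce the ``furthermore'' clause. Throughout, the crucial observation is that since $\vec W$ is in the signature of $\frak A$ and $\alpha\in M$, the set $W_\alpha$ itself is an element of $M$.

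For the forward direction of the iff, assume $W_\alpha\prec\frak A$. I would verify Tarski--Vaught for $M\cap W_\alpha$: given $\vec b\in M\cap W_\alpha$ and a formula $\phi$ with $\frak A\models\exists x\,\phi(x,\vec b)$, use $W_\alpha\prec\frak A$ to rephrase this as $\frak A\models\exists x\,(x\in W_\alpha\wedge\phi(x,\vec b))$; since the parameters $W_\alpha$ and $\vec b$ both lie in $M$, elementarity of $M$ yields a witness in $M$, which automatically lies in $W_\alpha$. For the reverse, assume $M\cap W_\alpha\prec\frak A$ and suppose for contradiction that $W_\alpha\not\prec\frak A$. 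Some formula $\phi$ fails reflection on $W_\alpha$, which means $\frak A$ satisfies ``there exist $\vec z\in W_\alpha$ with $\exists x\,\phi(x,\vec z)$ but no witness in $W_\alpha$'' with parameter $W_\alpha\in M$; by elementarity of $M$, such $\vec b$ can be chosen in $M\cap W_\alpha$, and then $M\cap W_\alpha\prec\frak A$ produces a witness in $M\cap W_\alpha\subseteq W_\alpha$, contradicting the failure.

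For the ``furthermore'' clause, if $\alpha=\beta$ there is nothing to show, so assume $\alpha>\beta$; then $\beta\notin M$ and $M\cap[\beta,\alpha)=\emptyset$. I would first show that $\alpha$ must be a limit: if $\alpha=\delta+1$, then $\delta$ is definable from $\alpha\in M$, hence $\delta\in M\cap[\beta,\alpha)$, which is empty. Next I would show $M\cap W_\alpha\subseteq W_\beta$, yielding $M\cap W_\alpha=M\cap W_\beta$: for $x\in M\cap W_\alpha$, the least ordinal $\gamma(x)$ with $x\in W_{\gamma(x)}$ is definable from $x$, so $\gamma(x)\in M$; continuity of $\vec W$ at the limit $\alpha$ gives $\gamma(x)<\alpha$, so $\gamma(x)\in M\cap\alpha=M\cap\beta$ and therefore $x\in W_\beta$.

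It then remains to verify $M\cap W_\alpha\prec\frak A$, after which the iff supplies $W_\alpha\prec\frak A$. Given $\vec b\in M\cap W_\alpha=M\cap W_\beta$ and $\phi$ with $\frak A\models\exists x\,\phi(x,\vec b)$, $W_\beta\prec\frak A$ furnishes a witness in $W_\beta\subseteq W_\alpha$, so $\frak A\models\exists x\,(x\in W_\alpha\wedge\phi(x,\vec b))$; elementarity of $M$ with parameter $W_\alpha\in M$ then provides a witness in $M\cap W_\alpha$. The subtle point, and the main obstacle I expect, is that $\beta$ is invisible to $M$ (so $W_\beta\notin M$); this is circumvented by observing that $W_\alpha$, which \emph{is} visible, has the same $M$-trace as $W_\beta$ along the filtration, allowing $W_\alpha$ to serve as a proxy for $W_\beta$ in the Tarski--Vaught witness extraction.
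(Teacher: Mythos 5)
Your proof is correct. The two directions of the iff are essentially identical to the paper's Tarski--Vaught arguments, using $W_\alpha\in M$ to bound quantifiers. For the ``furthermore'' clause you take a slightly different route: you first establish the auxiliary fact $M\cap W_\alpha = M\cap W_\beta$ (by showing $\alpha$ must be a limit when $\alpha>\beta$, then appealing to continuity of the filtration and definability of the index $\gamma(x)$), then verify Tarski--Vaught for $M\cap W_\alpha$ using $W_\beta\prec\frak A$, and finally invoke the iff. The paper instead argues by direct contradiction from $W_\alpha\not\prec\frak A$, pulling a witnessing failure into $M\cap W_\alpha$ and using $W_\beta\prec\frak A$ to produce a forbidden witness; it too relies on $M\cap W_\alpha=M\cap W_\beta$ but states this without proof. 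Your decomposition is a bit more modular and makes explicit a step the paper glosses over, at the modest cost of the extra observation that $\alpha$ is a limit; both arguments hinge on the same underlying fact and are about the same length.
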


\begin{proof}
We use the Tarski-Vaught criterion.  Suppose $W_\alpha \prec \frak A$.  Let $a \in M \cap W_\alpha$ and suppose $\frak A \models \exists x \varphi(a,x)$.  Then $W_\alpha \models  \exists x \varphi(a,x)$, and $M \models (\exists x \in W_\alpha) \varphi(a,x)$.  Thus there is $b \in M \cap W_\alpha$ such that $\frak A \models \varphi(a,b)$.

Now suppose $M \cap W_\alpha \prec \frak A$.  If $W_\alpha \nprec \frak A$, then there is some $a \in W_\alpha$ and a formula $\varphi(x,y)$ such that $\frak A \models \exists x \varphi(x,a)$, but a witness cannot be found in $W_\alpha$.  Then 
$$M \models (\exists y \in W_\alpha)(\exists x \varphi(x,y) \wedge (\forall z \in W_\alpha)\neg\varphi(z,y))$$
Let $b \in M \cap W_\alpha$ witness the outermost quantifier.  By our suppositions, there is $c \in M \cap W_\alpha$ such that $\frak A \models \varphi(c,b)$.  This contradicts that $\frak A \models (\forall z \in W_\alpha)\neg\varphi(z,b))$.

Now suppose $W_\beta \prec \frak A$ and $\alpha = \min(M \setminus \beta)$.  If $W_\alpha \nprec \frak A$, there is $a \in W_\alpha$ and a formula $\varphi(x,y)$ such that $\frak A \models \exists y\varphi(a,y) \wedge  (\forall y \in W_\alpha) \neg \varphi(a,y)$.  By elementarity, there is $b \in M \cap W_\alpha = M \cap W_\beta$ such that
$M \models \exists y \varphi(b,y) \wedge (\forall y \in W_\alpha) \neg \varphi(b,y)$.
Since $W_\beta \prec \frak A$, there is $c \in W_\beta \subseteq W_\alpha$ such that $\frak A \models \varphi(b,c)$.
But this contradicts that $\frak A \models (\forall y \in W_\alpha)\neg\varphi(b,y))$.
\end{proof}

Let us say that a tuple $\la \kappa,\lambda,\theta,\vec W,\calS,\calT \ra$ is \emph{nice} when:
\begin{enumerate}
\item $\kappa<\lambda<\theta$ are regular cardinals;
\item $\alpha^{<\kappa} < \theta$ for each $\alpha<\theta$;
\item $\vec W = \la W_\alpha : \alpha <\theta \ra$ is a filtration of $H_\theta$;
\item $\calS$ is a set of $M \prec \frak A = \la H_\theta,\in,\vec W \ra$ such that each $M \in \calS$ satisfies $|M| <\lambda$ and $M \cap \lambda \in \lambda$;
\item $\calT = \{ W_\alpha \prec \frak A : \cf(\alpha) \geq \lambda \}$;
\item $\la \calS,\calT \ra$ is appropriate for $\kappa$ and $\frak A$.
\end{enumerate}
The following is an extension of Claim 5.7 of \cite{neeman}:

\begin{lemma}
\label{allT}
Suppose $\la \kappa,\lambda,\theta,\vec W,\calS,\calT \ra$ is nice.   Let $\mathbb P$ be either $\mathbb P_{\kappa,\calS,\calT}$ or $\mathbb P_{\kappa,\calS,\calT}^\mathrm{dec*}$.  Then for any $p \in \mathbb P$ and $W \in \calT$, $p$ can be extended to include the model $W$.
\end{lemma}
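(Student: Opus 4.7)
The plan is to mimic Neeman's Claim 5.7 by closing $s \cup \{W\}$ under pairwise intersections. Assume $W \notin s$ (else take $q = p$) and define
$$s' := s \cup \{W\} \cup \{M \cap W : M \in s \text{ with } W \in M\},$$
setting $q := s'$ in the undecorated case and $q := \la s', f \ra$ in the decorated case, with $f$ unchanged. Each newly added intersection $M \cap W$ lies in $\calS \cap W$ by the appropriateness condition, so $s' \subseteq \calS \cup \calT$.

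First I would check the three defining axioms of $\mathbb P_{\kappa,\calS,\calT}$ for $s'$: rank-injectivity, cofinality of low-rank members below each $M_\beta$, pairwise intersections closing inside $s'$, and $s' \cap N \in N$ for each $N \in s'$. The verifications for the original members of $s$ and for the new intersections $M \cap W$ follow Neeman's proof of Claim 5.7 essentially verbatim, and the only genuinely new check concerns $W$ itself: $s' \cap W \in W$ holds because $s' \cap W$ has size ${<}\kappa$ while $W$ is ${<}\kappa$-closed. In the decorated case I must additionally verify $f(M, \alpha) \in M^*_{s'} \cap W''$ for each $(M, \alpha) \in \dom f$, where $W''$, the minimal member of $\calT$ containing $M$, depends only on $\calT$ and so is unchanged. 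Splitting on the position of $W$: if $W \in M$, every newly added model has rank at most $\rank(W) < \rank(M)$, so $M^*_{s'} = M^*_s$ and the condition is inherited; if $M \in W$, then minimality and the linear ordering of $\calT$ by $\in$ force $W'' \subseteq W$, so $f(M, \alpha) \in W'' \subseteq W$ automatically, and the new successor $M^*_{s'}$ is either $M^*_s$, $W$ (both of which trivially contain $f(M, \alpha)$), or some intersection $N \cap W$ with $N \in s$ and $W \in N$.

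The main obstacle will be this last subcase, $M^*_{s'} = N \cap W$, in which one must secure $f(M, \alpha) \in N$. Claim 2.34 of \cite{neeman} together with the restriction condition $\ran(f \rest N) \subseteq N$ delivers this whenever $M \in N$, and Neeman's structural dichotomy (either $M \in N$ or else $M \cap N \in s$ has rank strictly below $\rank(M)$), combined with the given configuration $M \in W \in N$, should rule out the alternative by rank and coherence considerations. If the subcase persists in some pathological configuration, one remedy is to enlarge $s'$ by further auxiliary intersections so that the new successor of each decorated $M$ collapses onto $M^*_s$ or $W$; the fact that the lemma only requires \emph{some} extension of $p$ including $W$ leaves ample room for such adjustments.
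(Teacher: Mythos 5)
Your proposal has a genuine gap: the one-step closure
\[
s' = s \cup \{W\} \cup \{M \cap W : M \in s,\ W \in M\}
\]
omits the intersections $M \cap W$ for those $M \in s$ that have $\rank(M) > \rank(W)$ but $W \notin M$. Such models occur routinely: each $M \in \calS$ has $|M| < \lambda$, while $\calT$ consists of $W_\alpha$ with $\cf(\alpha) \geq \lambda$, so a small model $M \in s$ whose rank is high can easily fail to contain a given $W = W_\alpha$ (indeed this happens whenever $\alpha \notin M$). For such $M$, your $s'$ violates the closure-under-intersections axiom of $\mathbb P_{\kappa,\calS,\calT}$, so it is not a condition, and the verifications you propose to ``follow Claim 5.7 essentially verbatim'' never get started. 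Nor can this be repaired merely by adjoining the missing intersections to $s'$: if $\alpha \notin M$ and $\beta = \min(M \cap \theta \setminus \alpha)$, then $M \cap W_\alpha = M \cap W_\beta$, and what the resulting set really needs in order to recover the $\in$-chain coherence is the transitive model $W_\beta$ itself, not just an extra small model.

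This is exactly why the paper proves Lemma~\ref{allT} by a minimality/induction argument rather than by direct closure. One takes a hypothetical witness $\la s,f\ra$ to failure for $W_\alpha$ that minimizes $\min\{\rank(N) : N \in s \setminus W_\alpha\}$, and lets $M$ realize this minimum. If $\alpha \in M$, then $\la (s \cap M) \cup \{W_\alpha\}, f \cap M \ra$ is a condition in $M$ below $\la s\cap M, f\cap M\ra$ (here the modified decoration constraint, $f \cap M \subseteq W_\alpha$, is used), and Lemma~\ref{spdec} gives a common extension, contradiction. If $\alpha \notin M$ --- precisely the case your $s'$ cannot handle --- one sets $\beta = \min(M \cap \theta \setminus \alpha)$, uses Lemma~\ref{rankmeet} to see that $W_\beta \in \calT$, extends $\la s,f\ra$ to include $W_\beta$ (this is safe because $W_\beta \in M$, so one can work inside $M$ and apply Lemma~\ref{spdec}), and then applies the minimality hypothesis to the resulting condition to force a contradiction. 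Your concerns about the decoration-range constraint and about which model becomes the new successor $M^*_{s'}$ are real but secondary: they are subsumed by the structural failure above, and in the paper's proof they simply never arise, because the extension is built one step at a time via strong master conditions rather than by attempting to close everything at once. The ``structural dichotomy'' you invoke at the end is not a substitute for the inductive step; the paper's use of Lemma~\ref{rankmeet} is the ingredient you are missing.
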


\begin{proof}
We give the proof for the decorated poset; the other case is just slightly simpler.  
Let $\la s,f \ra \in \mathbb P$ and let $W_\alpha \in \calT$.  First suppose $s \subseteq W_\alpha$.  Since for each $\la M,\alpha\ra \in \dom(f)$, $f(M,\alpha)$ is required to be in the smallest $W \in \calT$ such that $M \in W$, $f \subseteq W_\alpha$.  Since $W_\alpha$ is ${<}\kappa$-closed, $\la s,f \ra \in W_\alpha$.  Thus $\la s \cup \{W_\alpha\}, f \ra$ is a condition witnessing the claim.  If the claim fails for $W_\alpha$, then it must be witnessed by $\la s,f \ra$ with $s \nsubseteq W_\alpha$.  
Let us assume that $\la s,f \ra$ is a witness to failure with $\min \{ \rank(M) : M \in s \setminus W_\alpha \}$ as small as possible.  
If $M$ is of minimal rank $\geq\alpha$ in $s$, then $\rank(M) >\alpha$.  If $\alpha \in M$, then 
$\la s \cap M \cup \{W_\alpha \}, f \cap M \ra$
is a condition in $M$ below $\la s \cap M, f \cap M \ra$, since $f \cap M \subseteq W_\alpha$.  Thus by Lemma \ref{spdec}, it is compatible with $\la s,f \ra$.

Therefore, we must have $\alpha \notin M$.  Thus $M$ must be in $\calS$.  Let $\beta = \min(M \cap \theta \setminus \alpha)$.  Since $M$ is not cofinal in $\beta$, $\cf(\beta) \geq \lambda$.  
By Lemma \ref{rankmeet}, $W_\beta \in \calT$.  Thus $\la s \cap M, f \cap M \ra$ can be extended in $M$ to include $W_\beta$.
By Lemma \ref{spdec}, there is a condition $p \leq \la s,f \ra$ that includes $W_\beta$.  By the minimality assumption on $\la s,f \ra$, there is $q \leq p$ that includes $W_\alpha$.  This contradicts that $\la s,f \ra$ witnesses the failure of the claim.
\end{proof}

\begin{corollary}
\label{thetacc}
Under the hypotheses of Lemma \ref{allT}, $\mathbb P$ is $\theta$-c.c.
\end{corollary}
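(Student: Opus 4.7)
The plan is to combine a cardinality bound with a reflection and strong-properness argument.

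First, the niceness hypotheses give $|\mathbb P| = \theta$. Indeed, every condition $\la s,f\ra$ has $s \in [H_\theta]^{<\kappa}$ and $\dom(f) \in [s \times \kappa]^{<\kappa}$ with values in $H_\theta$; the hypothesis $\alpha^{<\kappa} < \theta$ for all $\alpha < \theta$, together with regularity of $\theta$, yields $\theta^{<\kappa} = \theta$, so $|\mathbb P| \le \theta$. In particular, every antichain in $\mathbb P$ has cardinality at most $\theta$, and the $\theta$-c.c.\ reduces to showing that no antichain has size exactly $\theta$. Since any antichain extends to a maximal one, it suffices to show that no \emph{maximal} antichain has cardinality $\ge \theta$.

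Suppose for contradiction that $A$ is a maximal antichain with $|A| \ge \theta$. Expand the structure $\frak A$ to $\frak A^A = \la H_\theta, \in, \vec W, A\ra$ by adding a unary predicate for $A$. By \Lowenheim--Skolem the set of $\alpha < \theta$ with $W_\alpha \prec \frak A^A$ contains a club in $\theta$, so $\calT^A := \{W_\alpha \prec \frak A^A : \cf(\alpha) \ge \lambda\}$ is $\in$-cofinal in $H_\theta$ and $\calT^A \subseteq \calT$. Pick any $W_\delta \in \calT^A$; since $|W_\delta| < \theta \le |A|$, there exists $p \in A$ with $p \notin W_\delta$.

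By Lemma \ref{allT}, extend $p$ to a condition $p^+ \in \mathbb P$ containing $W_\delta$, so that $q := p^+ \cap W_\delta$ is itself a condition and lies in $W_\delta$. Since $W_\delta \prec \frak A^A$ and $A$ is maximal in $\mathbb P$, the statement ``there is $a \in A$ compatible with $q$'' is true in $\frak A^A$; elementarity yields a witness $a \in A \cap W_\delta$ together with a common extension $r \in W_\delta$ of $a$ and $q$. Now $r \in W_\delta$ and $r \le q = p^+ \cap W_\delta$, so by Lemma \ref{spdec} the condition $r$ is compatible with $p^+$; any common extension of $r$ and $p^+$ is then simultaneously below $a$ and below $p$. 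Since $a \in W_\delta$ and $p \notin W_\delta$ force $a \ne p$, this contradicts $A$ being an antichain.

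The main obstacle is the reflection step: one needs that enough $W_\alpha$'s are elementary in the enriched structure $\frak A^A$ (not merely in $\frak A$) so that elementarity can be applied to the predicate $A$. This requires verifying that adding a unary predicate for $A$ preserves the club of $\alpha < \theta$ with $W_\alpha \prec \frak A$, so that $\calT^A$ is indeed $\in$-cofinal in $H_\theta$; this follows from the standard Skolem-hull argument underlying Lemma \ref{rankmeet}, since adding a single predicate keeps the language countable and the filtration $\vec W$ remains continuous. Once this is in place, the hypotheses of Lemma \ref{allT} and Lemma \ref{spdec} apply without modification, since $\calT^A \subseteq \calT$.
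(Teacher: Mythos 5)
Your proof is correct and follows essentially the same approach as the paper's: reflect the antichain into a transitive model $W$ that is elementary in an expanded structure, extend a condition to include $W$ via Lemma~\ref{allT}, and transfer compatibility from $W$ to $\mathbb P$ via strong properness (Lemma~\ref{spdec}). The only cosmetic differences are that you organize it as a proof by contradiction where the paper directly shows $\calA \subseteq W$ (and hence $|\calA| \leq |W| < \theta$), and your opening paragraph establishing $|\mathbb P| = \theta$ is unnecessary --- the $\theta$-c.c.\ just means every antichain has size $<\theta$, which follows immediately once $\calA \subseteq W$, with no need for the cardinality of $\mathbb P$ or a reduction to ``size exactly $\theta$.''
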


\begin{proof}
Suppose that $\calA \subseteq \mathbb P$ is a maximal antichain.  Let $W \in \calT$ be such that $\calA \cap W$ is maximal in $\mathbb P \cap W$.  Let $p \in \mathbb P$ be arbitrary, and extend it to $q$ that includes $W$.  There is $a \in \calA \cap W$ that is compatible with $q \rest W$.   By Lemma \ref{spdec}, $a$ is compatible with $q$.  Thus $\calA \subseteq W$.
\end{proof}

\begin{lemma}
\label{contchains}
Suppose $\la \kappa,\lambda,\theta,\vec W,\calS,\calT  \ra$ is nice and $\calS$ is stationary.
Let $G \subseteq \mathbb P^\mathrm{dec*}_{\kappa,\calS,\calT}$ be generic, let $W_0 \in W_1$ be two consecutive transitive models of $\calT$, and let $(W_0,W_1)_G$ be the set of models appearing in $G$ between $W_0$ and $W_1$.  Then $(W_0,W_1)_G$ is a subset of $\calS$ that is:
\begin{enumerate}
\item linearly ordered by both $\in$ and $\subseteq$, 
\item $\subseteq$-cofinal in $W_1$, and
\item continuous at limit points of $V$-cofinality at least $\kappa$.
\end{enumerate}
\end{lemma}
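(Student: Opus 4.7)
My plan is to establish the three clauses sequentially, each combining a direct structural analysis of Neeman conditions with a density argument. For (1), given $M, N \in (W_0,W_1)_G$ with $M \neq N$, I pick a common condition $s \in G$ (extending via Lemma~\ref{allT} to also contain $W_0$ and $W_1$), and assume $\rank(M) < \rank(N)$. I enumerate the small models of $s$ of rank strictly between $\rank(W_0)$ and $\rank(W_1)$ in rank order as $K_0, \dots, K_m$. Since $W_0$ and $W_1$ are consecutive in $\calT$, no transitive model of $s$ has rank in the open interval, so together with the enumeration exhausting the small models there, $K_j$ must be the immediate rank-predecessor of $K_{j+1}$ in the full rank-enumeration of $s$. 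Thus the index of $K_{j+1}$ is a successor, and the cofinality clause $\{ i : M_i \in K_{j+1}\}$ cofinal in this index forces $K_j \in K_{j+1}$. Iterating with the niceness implication $K_j \in K_{j+1} \Rightarrow K_j \subseteq K_{j+1}$ yields $K_{j'} \in K_{j''}$ for all $j' < j''$; in particular $M \in N$ and $M \subseteq N$, establishing the linear order.

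For (2), fix $x \in W_1$. I show the set of conditions $p \in \mathbb P^{\mathrm{dec}*}_{\kappa,\calS,\calT}$ containing a small model $M \in \calS$ with $x \in M$ and $\rank(W_0) < \rank(M) < \rank(W_1)$ is dense. Given $p$, I invoke Lemma~\ref{allT} to assume $W_1 \in p$, and use stationarity of $\calS$ together with $W_1 \prec \mathfrak A$ to choose $M \in \calS$ with $\{p, x, W_0\} \subseteq M$ and $M \in W_1$, so that $\rank(W_0) < \rank(M) < \rank(W_1)$. Extending $p$ to contain $M$ is then achieved by the same greatest-lower-bound construction as in Lemma~\ref{spdec}, preserving clause~(2) of the definition of $\mathbb P^{\mathrm{dec}*}$. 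Genericity produces the desired $M \in (W_0,W_1)_G$ with $x \in M$.

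For (3), let $\la M_i : i < \delta \ra$ enumerate the first $\delta$ elements of $(W_0,W_1)_G$ in the $\in$-order from (1), with $\cf^V(\delta) \geq \kappa$, and suppose the $\delta$-th element $M_\delta$ exists. Set $M_* := \bigcup_{i<\delta} M_i$. The inclusion $M_* \subseteq M_\delta$ is immediate from $M_i \in M_\delta$ (via (1) and niceness). For the reverse, given $x \in M_\delta$, a density argument parallel to (2) --- using stationarity of $\calS$ and $M_\delta \prec \mathfrak A$ to locate candidates inside $M_\delta$ --- produces $M' \in (W_0,W_1)_G$ with $x \in M'$ and $M' \in M_\delta$; by (1), $M' = M_i$ for some $i < \delta$, whence $x \in M_i \subseteq M_*$. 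Thus $M_\delta = M_*$.

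I expect the main obstacle to lie in the density steps of (2) and (3): while the analogue of Lemma~\ref{allT} freely adds transitive models, inserting a new small model $M$ into a condition shifts the ``successor model'' $M^{*}$ of the nearby existing models, and one must verify that the previously chosen decoration values still land in the updated $M^{*} \cap W$. This is precisely where the author's restriction of decorations to stay inside the next transitive model becomes essential: since decorations of a model $N$ are bounded within the smallest $W \in \calT$ above $N$ regardless of which small models are inserted between $N$ and $W$, the insertion is safe. The hypothesis $\cf^V(\delta) \geq \kappa$ in (3) plays a supporting role in guaranteeing $M_* \in \calS$ (giving $M_* \cap \lambda \in \lambda$ and $|M_*| < \lambda$), so that $M_*$ is truly a valid model of the sequence.
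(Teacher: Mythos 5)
Clause (1): Your direct argument (using that there are no transitive models of $s$ strictly between $W_0$ and $W_1$, so consecutive small models there must be $\in$-related by the cofinality clause, and closing off with $M \in N \Rightarrow M \subseteq N$) is a reasonable alternative to the paper's bare citation of \cite[Claim~2.10]{neeman}; it needs a small fix at limit indices in the enumeration, but that is routine.

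Clause (2): There is a genuine error. You arrange $W_1 \in p$ via Lemma~\ref{allT} and then ask for $M \in \calS$ with $p \in M$ \emph{and} $M \in W_1$. This is impossible: $W_1$ is transitive, so $M \in W_1$ gives $M \subseteq W_1$ and hence $p \in W_1$; but $W_1$ is among the models of $p$, so $p \notin W_1$. The paper's approach avoids this: take \emph{any} $M \in \calS$ containing $p$ and $x$ (no constraint that $M$ lie inside $W_1$), extend $p$ to include $M$ via Lemma~\ref{spdec}, and observe that the closure under intersections already contains $M \cap W_1$, which lies in $\calS$ by appropriateness (since $W_1 \in \calT \cap M$), has rank strictly between those of $W_0$ and $W_1$, and contains $x$. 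So it is the intersection $M \cap W_1$, not $M$ itself, that lands in $(W_0,W_1)_G$.

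Clause (3): This is where the real gap is, and it is the heart of the lemma. The ``density argument parallel to (2)'' does not transfer: in (2) the target $W_1$ is transitive and in $\calT$, so appropriateness manufactures members of $\calS$ \emph{inside} $W_1$ automatically. But $M_\delta$ is a small model; there is no appropriateness clause producing elements of $\calS$ inside a small model, and since $\calS$ need not be part of the structure $\frak A$, you cannot appeal to elementarity of $M_\delta$ to see $\calS \cap M_\delta$ either. Moreover, taking $Q \in \calS$ with $p, x \in Q$ and forming the GLB gives $Q \cap M_\delta = M_\delta$ (since $M_\delta \subseteq Q$), so no new model below $M_\delta$ appears. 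The missing idea is precisely the decorations. The paper's argument: after extending $\la s,f \ra$ (using $|s| < \kappa$ together with the hypothesis $\cf^V(\eta) \geq \kappa$ to guarantee room) so that $[W_0, M_\delta)_s$ sits inside a \emph{strict} initial segment of $[W_0, M_\delta)_t$, one takes a fresh small model $N$ in $M_\delta$ above all of $[W_0,M_\delta)_s$, forms the condition $\la r, h \ra \in M_\delta$ where $r$ is the initial segment of $t \cap M_\delta$ up to $N$, and attaches $x$ to the decoration of $N$. This \emph{forces} $x$ to be a member of $N$'s successor in the generic chain, which (after a further density step ensuring $N$ is not the largest model below $M_\delta$) is some $M_i$ with $i < \delta$. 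This use of decorations is essential: for the undecorated poset the continuity statement fails. Your closing paragraph instead attributes the role of $\cf^V(\delta) \geq \kappa$ to making $M_*$ a valid element of $\calS$, but that is beside the point (once $M_* = M_\delta$ is proved, $M_* \in \calS$ comes for free); the cofinality hypothesis is exactly what guarantees, together with $|s|<\kappa$, that a fresh model $N$ strictly above $[W_0,M_\delta)_s$ and strictly below $M_\delta$ can be inserted.
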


\begin{proof}
The linearity of $(W_0,W_1)_G$ follows from \cite[Claim 2.10]{neeman}.  To show that it is cofinal in $W_1$, let $\la s,f \ra$ be a condition such that $W_0,W_1 \in s$ and let $x \in W_1$.  By the stationarity of $\calS$, there is $M \in \calS$ such that $\{ \la s,f \ra,x \} \in M$.  Applying Lemma \ref{spdec}, there is $\la t,g\ra \leq \la s,f \ra$ with $M \in t$.  We must have $x \in M \cap W_1 \in t$.  Thus genercity implies that the union of $(W_0,W_1)_G$ is $W_1$.

Now suppose that $\la s,f \ra$ forces 
that $M$ is a model in the interval $(W_0,W_1)_G$ whose index in the increasing enumeration following its $\in$-ordering is a limit ordinal $\eta$ such that $\cf^V(\eta) \geq \kappa$.  Let $x \in M$ be arbitrary.  
Since there are no transitive models between $W_0$ and $M$, $s \cap M$ includes the interval $[W_0,M)_s$.  Since $|s| <\kappa$, there is $\la t,g \ra \leq \la s,f \ra$ such that $[W_0,M)_s$ is contained in a strict initial segment of $[W_0,M)_t$.  Let $N$ be the first model of $t$ above $[W_0,M)_s$.  Let $r \in M$ be the initial segment of $t \cap M$ up to and including $N$.  Then $r \in \mathbb P_{\kappa,\calS,\calT} \cap M$.
Let $h$ be the function $g \rest (r \times \kappa) \cup \{ \la \la N,0 \ra, x \ra \}$.
Then $\la r,h \ra$ is a condition in $M$ below $\la s \cap M,f \cap M \ra$.  By Lemma \ref{spdec}, it is compatible with $\la s,f \ra$.  For any condition $\la s',f' \ra$ below both $\la r,h\ra$ and $\la s,f \ra$ in which $N$ is not the largest model below $M$, $x$ is a member of the successor of $N$ in $s'$.  
Since $\la s,f \ra$ was an arbitrary condition forcing $M$ to be at place $\eta$ in $(W_0,W_1)_G$, it follows that models appearing at such places must be the union of the models in $(W_0,W_1)_G$ appearing below them.
\end{proof}

\begin{lemma}
\label{cards}
If $\la \kappa,\lambda,\theta,\vec W,\calS,\calT \ra$ is nice and $\calS$ is stationary, then $\mathbb P^\mathrm{dec*}_{\kappa,\calS,\calT}$ forces that $\lambda = \kappa^+$ and $\theta = \lambda^+$.
\end{lemma}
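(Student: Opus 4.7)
The plan is to decompose the statement into four claims: (i) $\theta$ is preserved as a cardinal, which is immediate from Corollary~\ref{thetacc}; (ii) every $V$-cardinal $\mu$ with $\kappa \leq \mu < \lambda$ has $|\mu|^{V[G]} \leq \kappa$; (iii) $|W_\alpha|^{V[G]} \leq \lambda$ for every $\alpha < \theta$ with $W_\alpha \in \calT$; and (iv) $\lambda$ is preserved. Taken together these yield $(\kappa^+)^{V[G]} = \lambda$ and $(\lambda^+)^{V[G]} = \theta$.

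For (ii), fix $\mu < \lambda$. By the stationarity of $\calS$ and a standard density argument, some $M \in \calS$ with $\mu \subseteq M$ appears in $G$. A further density argument, using that any condition containing a small model $M$ can be strengthened by inserting a new small model above $M$ within the same interval between consecutive transitive models, shows that $M$ has a small successor $M^*$ in $G$. Genericity then forces the union of the decorations of $G$ attached to $M$ to be a surjection from $\kappa$ onto $M^*$, giving $|\mu|^{V[G]} \leq |M|^{V[G]} \leq |M^*|^{V[G]} \leq \kappa$.

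For (iii), by Lemma~\ref{allT} every $W_\alpha \in \calT$ appears in $G$, and I would induct on $\alpha$. The pivotal combinatorial observation, read from Lemma~\ref{contchains}, is that for two consecutive $W_\beta, W_{\beta'} \in \calT$ in $G$, the chain $(W_\beta, W_{\beta'})_G$ of small models has $V$-order-type exactly $\lambda$: it cannot exceed $\lambda$, for otherwise continuity at the position $\lambda$ (whose $V$-cofinality is $\lambda \geq \kappa$) would force the model $M_\lambda$ at that position to equal $\bigcup_{\xi < \lambda} M_\xi$, but a strictly $\subseteq$-increasing union of $\lambda$-many sets has cardinality at least $\lambda$, contradicting $M_\lambda \in \calS$; and it is at least $\lambda$ since $\cf^V(\beta') = \lambda$ forces $W_{\beta'}$ to have $\subseteq$-cofinality at least $\lambda$. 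Combining this with (ii) yields $|W_{\beta'}|^{V[G]} \leq |W_\beta|^{V[G]} + \lambda \cdot \kappa = \lambda$ at successor stages of the induction. For limit stages $\alpha$ of the $\calT$-ordering, one writes $W_\alpha$ as the union of $W_\beta$'s for $\beta < \alpha$ in $\calT$ and must ensure $V[G]$ sees at most $\lambda$-many such $W_\beta$'s cofinally, which via the generic chain of small models crossing into $W_\alpha$ reduces to the inductive hypothesis on smaller indices.

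The principal obstacle is (iv), the preservation of $\lambda$. The plan is to exploit the strong properness of $\mathbb{P}^{\mathrm{dec}*}_{\kappa,\calS,\calT}$ (Lemma~\ref{spdec}) together with the ${<}\lambda$-closure of each $W \in \calT$. Any name for a surjection $\mu \to \lambda$ with $\mu < \lambda$ reduces, by strong properness, to a name decided by the sub-forcing $\mathbb{P}^{\mathrm{dec}*}_{\kappa,\calS,\calT} \cap W$ living in some sufficiently large $W \in \calT$; a secondary induction on $|W|^V$ (equivalently, iterating the present lemma at smaller $\theta$) establishes that this sub-forcing preserves $\lambda$, after which a further strong-properness computation shows the residual quotient adds no new $\mu$-sequences in $V[G_W]$. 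I expect verifying the distributivity of the quotient to be the most delicate step.
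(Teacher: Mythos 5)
Your decomposition into claims (i)--(iv) matches the natural structure, and your arguments for (ii) and (iii) are in the same spirit as the paper's (the paper's version of (iii) is leaner, simply citing that by Lemma~\ref{contchains} the next transitive model $W'$ above a given $W \in \calT$ becomes a union of a $\subseteq$-increasing chain of ${<}\lambda$-sized sets; your observation bounding the chain length via continuity at position $\lambda$ is a valid way to make that precise). For (i) you invoke the $\theta$-c.c.\ where the paper uses strong properness for $\calT$; either works.

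The genuine gap is in (iv), which you correctly isolate as the crux but leave unproved. Your plan --- reducing the name $\dot f$ to the sub-forcing $\mathbb P \cap W$ for some $W \in \calT$, inducting on $|W|^V$, and then arguing that the quotient over $V[G_W]$ adds no new $\mu$-sequences --- does not go through as sketched. The induction has no base case: every $W \in \calT$ is a transitive ${<}\lambda$-closed set with $\kappa+1 \subseteq W$, so $|W| \geq \lambda$ and the sub-forcings are never small enough to dismiss trivially, while ``iterating the present lemma at smaller $\theta$'' is circular. The proposed distributivity of the quotient is also far from clear and would itself require a substantial argument.

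What you are missing is that preservation of $\lambda$ follows directly from strong properness for $\calS$ alone by the completely standard master-condition argument, with no induction and no quotient analysis. Suppose $\dot f$ is a name forced to be a surjection from some $\mu < \lambda$ onto $\lambda$. Pick $M \in \calS$ with $\dot f, \mu \in M$; since $M \cap \lambda \in \lambda$ and $\mu \in M$, we get $\mu < M\cap\lambda$, hence $\mu \subseteq M$. A (strong) master condition $q$ for $M$ forces $\dot f(\xi) \in M$ for each $\xi < \mu$, so $q$ forces $\dot f[\mu] \subseteq M \cap \lambda < \lambda$, a contradiction. This also shows $\lambda$ stays regular, and the same argument with $W \in \calT$ in place of $M$ gives preservation of $\theta$ --- which is precisely how the paper handles (i). Once (iv) is available this cheaply, the paper's whole proof collapses to three sentences: strong properness preserves the regularity of $\lambda$ and $\theta$; the decorations attached to a small model $M$ in the generic surject $\kappa$ onto $M$'s successor, collapsing everything in $[\kappa,\lambda)$ to $\kappa$; and Lemma~\ref{contchains} provides the $\subseteq$-increasing chain of ${<}\lambda$-sized models that collapses everything in $[\lambda,\theta)$ to $\lambda$.
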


\begin{proof}
Since $\mathbb P = \mathbb P^\mathrm{dec*}_{\kappa,\calS,\calT}$ is strongly proper for $\calS \cup \calT$, the regularity of $\lambda$ and $\theta$ are preserved.  Since $\calS$ is stationary, for every $p \in \mathbb P$, and every $\alpha <\lambda$, there is $M \in \calS$ such that $\alpha,p \in M$.  Let $q \leq p$ be such that $M$ appears in $q$, $M$ has a successor $N$ in $q$, and $q$ forces that $N$ is the next model above $M$ appearing in the generic.  Then a density argument shows that $q$ forces that the union of the decorations attached to $M$ will be a surjection from $\kappa$ to $N$.  Thus $\mathbb P$ forces that $|\alpha| \leq \kappa$.  
Now let $\alpha < \theta$ and $p \in \mathbb P$ be arbitrary.  Let $W \in \calT$ be such that $\alpha,p \in W$.  Let $W'$ be the next transitive model above $W$.  Then by Lemma \ref{contchains}, $\mathbb P$ forces that $W'$ is the union of a $\subseteq$-increasing chain of sets of size $<\lambda$, so it forces that $|\alpha| \leq \lambda$.
\end{proof}

The remainder of this section borrows ideas from \cite{boban}.
Suppose $\la \kappa,\lambda,\theta,\vec W,\calS,\calT \ra$ is nice, and let $\mathbb P$ be either $\mathbb P_{\kappa,\calS,\calT}$ or $\mathbb P_{\kappa,\calS,\calT}^\mathrm{dec*}$.  Suppose $G \subseteq \mathbb P$ is generic over $V$. 
For notational convenience, let $W_0 = \emptyset$ and $W_\theta = H_\theta$.
Let us say $\delta \leq \theta$ is \emph{relevant} if either $\delta = 0$, $\delta = \theta$, or $W_\delta \in \calT$.
We define a decreasing sequence of sets $\calS_\delta^G$ for relevant $\delta \leq \theta$:  
 $\calS_\delta^G = \{ M \in \calS : M \cap W_\delta$ appears in $G \cap W_\delta \}$.  
Note that for $W_\delta \in \calT$, $G \cap W_\delta$ is $(\mathbb P \cap W_\delta)$-generic over $V$ by strong properness, and $\calS_\delta^G \in V[G\cap W_\delta]$.

\begin{lemma}
\label{statpres}
Suppose $G \subseteq \mathbb P$ is generic.  If $\delta$ is relevant and $\calS' \subseteq \calS_\delta^G$ is stationary in $V[G \cap W_\delta]$, then $\calS' \cap \calS^G_\theta$ is stationary in $V[G]$.
\end{lemma}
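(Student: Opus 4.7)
The plan is to adapt the standard stationarity-preservation argument for strongly proper forcing to the two-step extension $V \subseteq V[G \cap W_\delta] \subseteq V[G]$. Given a club in $V[G]$ with witness $F : [H_\theta]^{<\omega} \to H_\theta$, I would fix a $\mathbb P$-name $\dot F \in V$ and a $p \in G$ forcing $\dot F$ has this type; by Lemma \ref{allT} I may assume $W_\delta$ lies in $p$'s model sequence, so $p \rest W_\delta \in G \cap W_\delta$. By Corollary \ref{thetacc}, $\mathbb P$ has the $\theta$-c.c., and I fix in $V$ a function $a \mapsto A_a \in H_\theta$ assigning to each $a \in [H_\theta]^{<\omega}$ a maximal antichain deciding $\dot F(a)$.

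The key density argument happens in the intermediate model $V[G \cap W_\delta]$. Set $\mathbb P^* := \{r \in \mathbb P : r \rest W_\delta \in G \cap W_\delta\}$, for which $G$ is $\mathbb P^*$-generic over $V[G \cap W_\delta]$. Given any $q_0 \in \mathbb P^*$ below $p$, the stationarity of $\calS'$ in $V[G \cap W_\delta]$ produces $M \in \calS'$ with $\{q_0, \dot F, A, \delta, \vec W\} \subseteq M$. Since $M \in \calS_\delta^G$, some $r_0 \in G \cap W_\delta$ with $r_0 \leq q_0 \rest W_\delta$ has $M \cap W_\delta$ in its sequence. I claim $q_0$ can be extended to $q \leq q_0$ adding $M$ to the model sequence, with $q \rest W_\delta = r_0$; this places $q$ in $\mathbb P^* \downarrow q_0$. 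Since such $q$ exist for every $q_0$, they form a dense subset of $\mathbb P^* \downarrow p$, and genericity produces some such $q$ in $G$.

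For this $q \in G$, Lemma \ref{spdec} makes $q$ a strong master condition for $M$, so $\dot G \cap M$ is $(M, \mathbb P \cap M)$-generic. For each $a \in [M]^{<\omega}$, the antichain $A_a$ belongs to $M$ by elementarity, and the unique value $F(a)^{V[G]}$ decided by $G \cap A_a \cap M$ is itself in $M$ (also by elementarity). Hence $M$ is closed under $F$, and $M$ appears in $G$ via $q$, so $M \in \calS' \cap \calS_\theta^G$ meets the given club.

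The principal obstacle is verifying the construction of the extension $q$: Neeman's condition axioms (rank-injectivity, cofinality of the $\in$-sequence below each model, and closure under intersections) must all be preserved as $M$ is spliced into $q_0$'s model sequence above $W_\delta$ while the portion below $W_\delta$ is aligned with $r_0$. Since $q_0 \in M$ forces $s_{q_0} \subseteq M$, and the appropriateness of $(\calS, \calT)$ together with $\delta, \vec W \in M$ ensures each intersection $M \cap N$ for $N \in s_{q_0}$ lies in $\calS \cup \calT$, the construction closely parallels the extension step in Lemma \ref{allT}.
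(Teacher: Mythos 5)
Your proposal takes essentially the same route as the paper: a density argument in the quotient $\mathbb P / (G \cap W_\delta)$ carried out in $V[G\cap W_\delta]$, using stationarity of $\calS'$ there to find a model $M$ containing the relevant data, then extending a condition to add $M$ and observing the result is still in the quotient. The use of a $\theta$-c.c.\ antichain assignment instead of passing to a model $N \prec H_{\theta^*}$ containing $\dot F$ is a legitimate variant, though as written the clause ``$\{q_0, \dot F, A, \delta, \vec W\} \subseteq M$'' does not typecheck, since $\dot F$, $A$, and $\vec W$ are not elements of $H_\theta$ and hence cannot belong to $M$; you need to carry them as predicates of a structure on $H_\theta$ rather than as members.

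The genuine gap is the claim that $q_0$ can be extended to $q \leq q_0$ adding $M$ to the model sequence \emph{with} $q \rest W_\delta = r_0$. This is both stronger than needed and generally false. The condition $r_0 \in G\cap W_\delta$ may contain models $R$ of rank strictly between $\rank(M\cap W_\delta)$ and $\delta$ that are not elements of $M$; for such $R$, the required intersection $M \cap R$ need not belong to $\calS\cup\calT$ at all, so no condition containing both $M$ and all of $r_0$'s models exists. What actually works (and what the paper does) is to use only the data $q_0 \in M$: Lemma~\ref{spdec} produces $q \leq q_0$ whose models are exactly the closure of $s_{q_0}\cup\{M\}$ under intersections, and then one checks that $q\rest W_\delta$ is the \emph{weakest} extension of $q_0\rest W_\delta$ in which $M\cap W_\delta$ appears. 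Since $q_0\rest W_\delta \in G\cap W_\delta$ and $M\cap W_\delta$ appears in $G\cap W_\delta$, this weakest common strengthening lies in $G\cap W_\delta$ because $G\cap W_\delta$ is a filter --- no separate $r_0$ and no control over exactly which condition below $W_\delta$ one lands on is required. Replacing ``$q\rest W_\delta = r_0$'' by ``$q\rest W_\delta \in G\cap W_\delta$'' and justifying it this way closes the gap; the rest of your argument (strong master condition, elementarity over the antichain assignment) then goes through as intended.
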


\begin{proof}
Suppose $G$, $\delta$, and $\calS'$ are as hypothesized.  Let $F : W_\theta^{<\omega} \to W_\theta$ be a function in $V[G]$.  Work in $V[G \cap W_\delta]$.   
Let $\dot F$ be a $(\mathbb P / (G \cap W_\delta))$-name for $F$, and let $p_0 \in \mathbb P / (G \cap W_\delta)$ be arbitrary.  Let $\theta^*>\theta$ be regular and let $\frak A$ be the structure $\la H_{\theta^*},\in,\mathbb P,G\cap W_\delta,p_0,\dot F \ra$ (as defined in $V[G \cap W_\delta]$).  Let $N \prec \frak A$ be such that $N \cap W_\theta = M \in \calS'$.  
Now go back to $V$.  Let $s$ be the set of models of $p_0$.  By Lemma \ref{spdec} (or \cite[Corollary 2.32]{neeman}), there is $p_1 \leq p_0$ in $\mathbb P$ in which $M$ appears, and if $t$ is the set of models of $p_1$, then $t$ is the closure of $s \cup \{M \}$ under intersections, and if we are using the decorated poset, then the decorating function is unchanged.
By hypothesis, $M \cap W_\delta$ appears in $G \cap W_\delta$, so $p_1 \rest W_\delta \in G \cap W_\delta$, since $p_1 \rest W_\delta$ is the weakest condition extending $p_0 \rest W_\delta$ in which $M \cap W_\delta$ appears.
$p_1$ is a strong master condition for $M$, and thus it forces that $M$ is closed under $\dot F$.  By the arbitrariness of $p_0$, it is forced by $\mathbb P / (G \cap W_\delta)$ that there is a model in $\calS' \cap \calS_\theta^{\dot G}$ that is closed under $\dot F$.
\end{proof}

\begin{lemma}
\label{*proj}
Suppose $\la \kappa,\lambda,\theta,\vec W,\calS,\calT \ra$ is nice and $\calS$ is stationary. 
 Assume also that 
 $\mathbb P_{\kappa,\calS,\calT}^\mathrm{dec*}$ preserves the regularity of $\kappa$.  If $G \subseteq \mathbb P_{\kappa,\calS,\calT}^\mathrm{dec*}$ is generic over $V$, then in $V[G]$, $\ns_\lambda \restriction \cof(\kappa)$ is the canonical projection of $\ns \restriction \calS^G_\theta$.  Equivalently, for every stationary $A \subseteq \lambda \cap \cof(\kappa)$ in $V[G]$, $\{ M \in \calS^G_\theta : M \cap \lambda \in A \}$ is stationary.
\end{lemma}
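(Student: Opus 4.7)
My plan is to reduce the statement, via the $\theta$-chain condition (Corollary \ref{thetacc}) and the stationary-preservation Lemma \ref{statpres}, to an inner stationarity claim in some $V[G \cap W_\delta]$, and then to exploit the continuous chain of small models provided by Lemma \ref{contchains}, together with strong-properness extension, to realize any $\alpha \in A$ as $M \cap \lambda$ for some $M \in \calS_\delta^G$ closed under a prescribed function.

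Write $\mathbb P$ for $\mathbb P^{\mathrm{dec}*}_{\kappa,\calS,\calT}$. First, I would note that by $\theta$-c.c.\ and the $\subseteq$-cofinality of $\calT$ in $H_\theta$, any $V[G]$-stationary $A \subseteq \lambda \cap \cof(\kappa)$ lies in some $V[G \cap W_\delta]$ with $W_\delta \in \calT$. By Lemma \ref{statpres}, it then suffices to show that $\calS'' := \{M \in \calS_\delta^G : M \cap \lambda \in A\}$ is $V[G \cap W_\delta]$-stationary, since its intersection with $\calS_\theta^G$ will be $V[G]$-stationary and contained in $\{M \in \calS_\theta^G : M \cap \lambda \in A\}$. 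To establish this, fix any $F' : H_\theta^{<\omega} \to H_\theta$ in $V[G \cap W_\delta]$ with $(\mathbb P \cap W_\delta)$-name $\dot F'$, and let $W_{\alpha_1}$ be the least element of $\calT$. By Lemma \ref{contchains} applied to the $(\mathbb P \cap W_{\alpha_1})$-generic $G \cap W_{\alpha_1}$, the models of $\calS_{\alpha_1}^G$ form a $\subseteq$-increasing chain $\la N_\xi : \xi < \eta \ra$, $\subseteq$-cofinal in $W_{\alpha_1}$ and continuous at $V$-cofinalities $\geq \kappa$. Since $\lambda \subseteq W_{\alpha_1}$, the non-decreasing map $\pi(\xi) := N_\xi \cap \lambda$ is cofinal in $\lambda$, and its set of limit points $D$ is a $V[G \cap W_\delta]$-club in $\lambda$.

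Using that $\mathbb P$ is ${<}\kappa$-closed (as is standard in the nice arrangement where $\calS$ consists of ${<}\kappa$-closed models), for each $\alpha \in A$ one has $\cf^V(\alpha) \geq \kappa$; then for $\alpha \in A \cap D$, continuity of $\pi$ at $\xi := \sup\{\zeta : \pi(\zeta) < \alpha\}$ forces $\pi(\xi) = \alpha$, realizing $\alpha$ as $N \cap \lambda$ for $N = N_\xi \in \calS_{\alpha_1}^G$. To lift this $N$ to an $M \in \calS_\delta^G$ closed under $F'$, I would choose, in $V$, some $M \in \calS$ with $M \cap W_{\alpha_1} = N$ and $\dot F' \in M$, obtained as a Skolem hull of $N \cup \{\dot F'\}$ arranged so that no Skolem value on the added parameters lands in $W_{\alpha_1} \setminus N$; by Lemma \ref{spdec} any condition of $G \cap W_\delta$ whose set of models contains $N$ extends to one including $M$, and the resulting strong master condition for $M$ forces $M$ to be closed under $\dot F'$. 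Hence $M \cap \lambda = N \cap \lambda = \alpha \in A$ and $M \in \calS''$ is closed under $F'$, showing $\calS''$ is $V[G \cap W_\delta]$-stationary.

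The main obstacle I expect is the lifting step: producing $M \in \calS_\delta^G$ with $M \cap W_{\alpha_1} = N$ and $\dot F' \in M$ while preserving $M \cap \lambda = \alpha$. The Skolem-hull construction must be arranged so that no new elements of $W_{\alpha_1}$ are introduced beyond $N$; this is a delicate but standard device in Neeman-style arguments and relies on the richness built into the nice setup. A secondary point to verify is the cofinality-preservation step converting $V[G]$-cof $\kappa$ to $V$-cof $\geq \kappa$, which follows from ${<}\kappa$-closure of $\mathbb P$.
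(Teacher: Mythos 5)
Your reduction via the $\theta$-c.c.\ and Lemma \ref{statpres} to inner stationarity in $V[G\cap W_\delta]$ matches the paper's first step, but from there the two arguments diverge sharply: the paper proceeds by contradiction using the continuous chain $(W_\eta,W_{\eta'})_G$ for consecutive transitive models $W_\eta\in W_{\eta'}$ \emph{above} the level where a putative nonstationarity witness $F$ is absorbed, whereas you work \emph{below} the least element $W_{\alpha_1}$ of $\calT$ and then try to lift.

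The lifting step is a genuine gap, and I don't see how to repair it without additional hypotheses. Given $N\in\calS$ with $N\subseteq W_{\alpha_1}$ and $N\cap\lambda=\alpha$, you need some $M\in\calS$ with $M\cap W_{\alpha_1}=N$ and $\dot F'\in M$; this is an \emph{end-extension} of $N$ that gaps over $W_{\alpha_1}\setminus N$. Nothing in the definition of a nice tuple guarantees such $M$ exists. Appropriateness gives that intersections $M\cap W$ of $\calS$-models with $\calT$-models land back in $\calS$, but it says nothing about being able to go the other way. Your device of forming a Skolem hull ``arranged so that no Skolem value on the added parameters lands in $W_{\alpha_1}\setminus N$'' is not something one can simply arrange: for a fixed structure $\frak A$, the Skolem hull $\sk^{\frak A}(N\cup\{\dot F'\})\cap W_{\alpha_1}$ will typically strictly contain $N$, and forcing it not to is exactly the ``gap end-extension'' property that is a nontrivial large-cardinal assumption elsewhere in the paper (cf.\ \S4 on end-extending cardinals and the maximality obstructions for Magidor models in \S7.3). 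Indeed, Theorem \ref{shortmag} is specifically about such end-extensions failing. A secondary issue: your appeal to Lemma \ref{spdec} to extend a condition of $G\cap W_\delta$ ``to one including $M$'' produces a condition in $\mathbb P\cap W_\delta$, but there is no reason this extension lies in $G\cap W_\delta$; the generic has already been fixed, so you cannot simply add new models to it. The argument would need to be reformulated as a density argument over $\mathbb P\cap W_\delta$, and even then the missing existence of $M$ remains the obstruction.

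The paper sidesteps all of this. Given a would-be witness $F$ to nonstationarity of $\calS^G_\delta(A)$ in $V[G\cap W_\delta]$, one finds $\eta>\delta$ with $W_\eta$ closed under $F$, takes $W_{\eta'}$ the next transitive above $W_\eta$, and looks at the continuous, $\in$- and $\subseteq$-increasing chain $\la M_\beta:\beta<\lambda\ra$ of $\calS$-models appearing in $G$ between $W_\eta$ and $W_{\eta'}$. Each $M_\beta$ already contains $W_\eta$ as an $\in$-member, hence $M_\beta\cap W_\delta$ automatically appears in $G\cap W_\delta$, and since the map $\beta\mapsto M_\beta\cap\lambda$ is strictly increasing and continuous at points of $V$-cofinality $\geq\kappa$, the stationarity of $A$ in $V[G]$ (and the absoluteness of nonstationarity upward) gives the contradiction with no lifting or end-extension required. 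One minor point: your parenthetical claim that $\mathbb P$ is ${<}\kappa$-closed ``as is standard in the nice arrangement'' is not part of the definition of niceness; the hypothesis is only that $\mathbb P$ preserves the regularity of $\kappa$. Fortunately the inference you need (that $\cf^{V[G]}(\alpha)=\kappa$ implies $\cf^V(\alpha)\geq\kappa$) follows already from regularity preservation, so this does not create a further gap.
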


\begin{proof}
Suppose $A \subseteq \lambda \cap \cof(\kappa)$ is a stationary set in $V[G]$.  Let $\delta$ be such that $W_\delta \in \calT$ and $A \in V[G \cap W_\delta]$.  Let $\calS^G_\delta(A) = \{ M \in \calS^G_\delta : M \cap \lambda \in A \}$.  By Lemma \ref{statpres}, it suffices to show that $\calS^G_\delta(A)$ is stationary in $V[G \cap W_\delta]$.
Note that for all $\eta$ such that $\delta<\eta<\theta$ and $W_\eta \in \calT$, $\{ M \cap W_\eta : M \in \calS^G_\delta(A)  \} = \calS^G_\delta(A) \cap W_\eta$.  If $\delta<\eta<\eta'<\theta$ and $\calS^G_\delta(A) \cap W_{\eta'}$ is a stationary subset of $\p_\lambda(W_{\eta'})$, then $\calS^G_\delta(A) \cap W_{\eta}$ is a stationary subset of $\p_\lambda(W_{\eta})$.  

Suppose towards a contradiction that $\calS^G_\delta(A)$ is nonstationary in $V[G \cap W_\delta]$, and let $F : W_\theta^{<\omega} \to W_\theta$ be a function in $V[G \cap W_\delta]$ such that no $M \in \calS^G_\delta(A)$ is closed under $F$.  
There is $\eta$ such that $\delta<\eta<\theta$ and $W_\eta$ is closed under $F$, and thus $F \rest W_\eta$ witnesses that $\calS^G_\delta(A) \cap W_{\eta}$ is nonstationary in $\p(W_\eta)$.
 Hence $\calS^G_\delta(A) \cap W_{\eta'}$ is nonstationary for $\eta\leq\eta'<\theta$.  Now let $W_{\eta'}$ be the successor of $W_\eta$ in $\calT$.  By Lemma \ref{contchains}, the models in $(W_\eta,W_{\eta'})_G$ form a chain of length $\lambda$ that is $\in$- and $\subseteq$-increasing and continuous at points of cofinality $\kappa$.  In $V[G]$, let $\{ M_\alpha : \alpha < \lambda \}$ enumerate this chain.  Since $A$ is stationary in $V[G]$, $\{ \alpha \in A : M_\alpha \cap \lambda = \alpha \}$ is also stationary in $V[G]$.  But this means that $\calS^G_\delta(A) \cap W_{\eta'}$ is stationary in $V[G]$, a contradiction.
\end{proof}

%

\section{End-extending cardinals and $\ns_{\omega_1}$}
\label{sec:ee}
In this section, we investigate a relatively weak large cardinal notion and its forcing applications for the nonstationary ideal on $\omega_1$.


For two sets $M,N$ we say that $N$ is an end-extension of $M$, or $M \sqsubseteq N$, when $N \cap \sup(M \cap \ord) = M \cap \ord$.
Suppose $\kappa$ is a cardinal and $\calS \subseteq [\kappa]^{<\kappa}$.  We say that $\calS$ \emph{admits gap end-extensions} when for every structure $\frak A$ on $\kappa$ in a countable language, there is an expansion $\frak A^*$ of $\frak A$, also in a countable language, such that for every $M \in \calS$ elementary in $\frak A^*$, there are cofinally many $\alpha<\kappa$ such that for some $N \prec \frak A^*$, $M \cup \{\alpha\} \subseteq N$ and $N \cap \alpha = M$.
For a cardinal $\mu\leq\kappa$, we say that $\kappa$ is $\mu$-end-extending when $[\kappa]^{<\mu}$ admits gap end-extensions, and we say $\kappa$ is end-extending when it is $\kappa$-end-extending.

\begin{lemma}
\label{statgap}
Suppose $\mu<\kappa$ and $\calS \subseteq [\kappa]^{<\kappa}$ admits gap end-extensions.  Then for all structures $\frak A$ on $\kappa$ and all clubs $C \subseteq \kappa$, there is an expansion $\frak A^*$ such that for all $M \prec \frak A^*$ with $M \in \calS$ and $M \cap \mu \in \mu$, there are cofinally many $\alpha \in C$ such that $\cf(\alpha)\geq\mu$ and $\sk^{\frak A^*}(M \cup \{\alpha\}) \cap \alpha = M$.
\end{lemma}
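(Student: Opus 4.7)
\emph{Plan.} I would choose $\frak A^*$ so that certain Skolem functions automatically force the ``end-extension points'' of any $M \prec \frak A^*$ with $M \in \calS$ and $M\cap\mu\in\mu$ to lie in $C$ and have cofinality at least $\mu$.

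First, expand $\frak A$ to a structure $\frak A_0$, in a countable language, by adjoining: a predicate for $C$ together with the function $c(\beta) := \sup(C \cap \beta)$; a constant for $\mu$ and the cofinality function $\cf$; and, for each $\beta<\kappa$ with $\cf(\beta) < \mu$, values $g(\beta,\xi) := \vec c_\beta(\xi)$, where $\vec c_\beta$ is the $\lhd$-least cofinal sequence of length $\cf(\beta)$ in $\beta$ for a fixed well-ordering $\lhd$ of $H_\kappa$. Then let $\frak A^*$ be an expansion of $\frak A_0$, still in a countable language, witnessing that $\calS$ admits gap end-extensions.

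Fix $M \prec \frak A^*$ in $\calS$ with $M \cap \mu = \mu_M \in \mu$, and let $D_M := \{\beta < \kappa : \sk^{\frak A^*}(M \cup \{\beta\}) \cap \beta = M\}$; by hypothesis $D_M$ is cofinal in $\kappa$. Writing $c_0 := \min(C \setminus (\sup M + 1))$, I claim every $\beta \in D_M$ with $\beta > c_0$ satisfies $\beta \in C$ and $\cf(\beta) \geq \mu$. Since $D_M$ is cofinal, this yields the required cofinal set of $\alpha$'s.

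\emph{Step 1 ($C$-membership).} For such $\beta$, the value $c(\beta) \geq c_0 > \sup M$ lies in $\sk^{\frak A^*}(M \cup \{\beta\})$. If $\beta \notin C$ then, since $C$ is closed, $c(\beta) < \beta$, so $c(\beta) \in \sk^{\frak A^*}(M \cup \{\beta\}) \cap \beta = M$; but $c(\beta) > \sup M$ forces $c(\beta) \notin M$, a contradiction.

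\emph{Step 2 (high cofinality).} Suppose for contradiction $\lambda := \cf(\beta) < \mu$. Then $\lambda < \beta$ and $\lambda \in \sk^{\frak A^*}(M \cup \{\beta\}) \cap \beta = M$, so $\lambda \in M \cap \mu = \mu_M$, whence $\lambda \subseteq \mu_M \subseteq M$. Closure under $g$ now gives $\vec c_\beta[\lambda] \subseteq \sk^{\frak A^*}(M \cup \{\beta\}) \cap \beta = M$; since $\vec c_\beta$ is cofinal in $\beta$, $M$ is cofinal in $\beta$, contradicting $\sup M < \beta$.

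The main obstacle is Step 2: one must arrange the signature so that both $\cf(\beta)$ and a genuinely cofinal sequence through $\beta$ become simultaneously reachable inside the Skolem hull, so that the hull itself exposes any $\beta$ of cofinality below $\mu$. Once that is done, the rest is routine bookkeeping on the admits-gap-end-extensions hypothesis.
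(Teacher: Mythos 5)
Your proof is correct and uses essentially the same approach as the paper: adjoin $C$, $\cf$, and a function exposing cofinal sequences so that the Skolem hull of $M\cup\{\alpha\}$ betrays any $\alpha$ of low cofinality or outside $C$, then invoke the gap-end-extension hypothesis. The only real difference is that your threshold $c_0=\min(C\setminus(\sup M+1))$ for the $C$-membership step is slightly more careful than the paper's "$\alpha>\sup(M)+1$" (which does not by itself guarantee $C\cap(\sup M,\alpha)\neq\emptyset$); this is just tighter bookkeeping and does not change the substance.
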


\begin{proof}
Let $f : \kappa^2 \to \kappa$ be such that if $\cf(\alpha) \leq \beta$, then $\{ f(\alpha,\gamma) : \gamma <\beta \}$ is cofinal in $\alpha$.
Let $\frak A$ be any structure on $\kappa$ with $f$, $C$, and $\mu$ in its language.  Then for $\alpha<\kappa$, $\cf(\alpha)$ is definable in $\frak A$ as the least $\beta$ such that for all $\gamma<\alpha$, there is $\delta<\beta$ with $f(\alpha,\delta) \geq \gamma$.  Let $\frak A^*$ be an expansion witnessing that $\calS$ admits gap end-extensions.  Suppose $M \prec \frak A^*$ is in $\calS$ and $M \cap \mu \in \mu$.  For $\alpha<\kappa$, let $N_\alpha = \sk^{\frak A^*}(M \cup \{\alpha \})$.  If $\alpha > \sup(M)+1$ is such that $N_\alpha \cap \alpha = M$, then $\cf(\alpha) \geq \mu$, because otherwise, $\cf(\alpha) \in N_\alpha \cap \mu$ and $\{ f(\alpha,\delta) : \delta < \cf(\alpha) \}$ is cofinal in $\alpha$ and contained in $N_\alpha$.  Also, if $N_\alpha \cap \alpha = M$, then $N_\alpha \models$ ``$C$ is unbounded in $\alpha$,'' so $\alpha \in C$ by elementarity.
\end{proof}

\begin{prop}
If $\mu$ is regular, $\mu<\kappa$, and $\kappa$ is $\mu$-end-extending, then $\cf(\kappa)>\mu$.
If $\kappa$ is end-extending, then $\kappa$ is weakly inaccessible.
\end{prop}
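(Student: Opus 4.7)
The plan is to prove both parts by contradiction, using a cofinal sequence of $\kappa$ in the first part and a uniform coding of bijections $f_\alpha:\lambda\to\alpha$ in the second.

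For the first part, assume $\nu:=\cf(\kappa)\le\mu$ and fix a strictly increasing cofinal $c:\nu\to\kappa$ with $c(0)\ge\mu$. Code $c,\nu,\mu$ into the language of a structure $\frak A$ on $\kappa$, let $\frak A^*$ be an expansion witnessing that $[\kappa]^{<\mu}$ admits gap end-extensions, and use the standard Skolem-hull fixed-point argument in $\mu$ to find $M\prec\frak A^*$ with $|M|<\mu$ and $M\cap\mu=\gamma\in\mu$. If $\nu<\mu$, then by definability $\nu\in M$ and $\nu\in M\cap\mu=\gamma$, so $\nu\subseteq\gamma\subseteq M$; hence $c[\nu]\subseteq M$ makes $M$ cofinal in $\kappa$, blocking any bound $M\subseteq\alpha<\kappa$ coming from gap end-extension. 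So $\nu=\mu$ and $\gamma<\nu$.

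Now apply gap end-extension to pick $\alpha>c(\gamma)$ and a witness $N\prec\frak A^*$ with $M\cup\{\alpha\}\subseteq N$ and $N\cap\alpha=M$. Since $\alpha>c(\gamma)\ge\mu$, $N\cap\mu\subseteq N\cap\alpha=M$ gives $N\cap\mu=\gamma$. The key bound $N\cap\kappa\subseteq c(\gamma)$ then follows by elementarity: for any $x\in N\cap\kappa$, the ordinal $i:=\min\{j<\mu:c(j)>x\}$ is definable from $x,c,\mu\in N$, hence $i\in N\cap\mu=\gamma$, and strict monotonicity yields $x<c(i)<c(\gamma)$. But $\alpha\in N\cap\kappa$ with $\alpha>c(\gamma)$, a contradiction.

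For the second part, end-extendingness of $\kappa$ inherits to $\mu$-end-extendingness for every cardinal $\mu\le\kappa$ by reusing the witnessing expansions, so the first part gives $\cf(\kappa)>\mu$ for every regular uncountable $\mu<\kappa$; this forces $\kappa$ to be regular. To rule out $\kappa=\lambda^+$, pick $\frak A$ on $\kappa$ whose language codes, via a ternary relation, a fixed bijection $f_\alpha:\lambda\to\alpha$ for every $\alpha\in[\lambda,\lambda^+)$. Let $\frak A^*$ witness end-extendability and set $M=\sk^{\frak A^*}(\lambda)$, so $|M|=\lambda$ and $\lambda\subseteq M$. For any $\alpha$ and $N$ produced by gap end-extension of $M$, the inclusion $\lambda\subseteq M\subseteq\alpha$ gives $|\alpha|=\lambda$; then $f_\alpha\in N$ by definability from $\alpha$, and for each $\beta<\lambda\subseteq N$ we get $f_\alpha(\beta)\in N\cap\alpha=M$, forcing $\alpha=\ran f_\alpha\subseteq M$. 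Together with $M\subseteq\alpha$, this pins $M=\alpha$ as sets, so $\alpha$ is uniquely determined by $M$ and cannot occur at cofinally many values, a contradiction. Hence $\kappa$ is a limit cardinal and thus weakly inaccessible.

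The main obstacle is the case $\nu=\mu$ in the first part: the easy ``$M$ is cofinal'' argument is unavailable because $\mu$ itself does not fit inside a model of size $<\mu$. The remedy is to squeeze a contradiction out of the subtler interaction between the gap condition $N\cap\alpha=M$ and elementarity, leveraging $N\cap\mu=\gamma$ to push the supremum of ordinals in $N$ down to $c(\gamma)<\kappa$ and then beating that bound with the gap-end-extension point $\alpha$.
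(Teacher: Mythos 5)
Your proof is correct, though it takes a genuinely different route from the paper's. For the first claim, the paper invokes its Lemma~\ref{statgap} (gap end-extension points can be found in any club and have cofinality at least $\mu$) and then takes a club of points of cofinality ${<}\mu$, whereas you work directly with a cofinal map $c:\nu\to\kappa$ coded into the language. Your case $\nu<\mu$ is immediate (the range of $c$ lands in $M$, so $M$ has no room for an end-extension point), and the case $\nu=\mu$ is the genuinely new idea: you show every $N\prec\frak A^*$ with $N\cap\mu=\gamma$ is bounded by $c(\gamma)$, which collides with the gap point $\alpha>c(\gamma)$. This essentially re-derives the cofinality constraint of Lemma~\ref{statgap} in a self-contained way. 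For the second claim, the paper builds a cofinal $N\sqsupseteq M$ by iterating end-extendibility along a length-$\kappa$ chain and then uses an injection $\alpha\to\nu$ to derive a contradiction; it also needs a separate observation to rule out $\kappa=\omega_1$. Your argument is cleaner: a single gap end-extension, combined with coded bijections $f_\alpha:\lambda\to\alpha$, forces $M=\alpha$, so only one gap point can exist, directly contradicting ``cofinally many''; this also handles $\kappa=\omega_1$ (i.e.\ $\lambda=\omega$) without a side argument. One small slip: you write ``for every regular uncountable $\mu<\kappa$'' when deducing regularity of $\kappa$; this should simply be ``for every regular $\mu<\kappa$'' (including $\mu=\omega$), since otherwise $\cf(\kappa)=\omega$ is not immediately excluded for a limit $\kappa$. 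The fix is trivial since part (1) covers $\mu=\omega$, but as written the step has a small gap.
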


\begin{proof}
Suppose $\mu$ is regular, $\mu<\kappa$, and $\kappa$ is $\mu$-end-extending.  Towards a contradiction, suppose that $\cf(\kappa) \leq \mu$.  Let $C \subseteq \kappa$ be a club consisting of ordinals of cofinality $<\mu$. 
By the previous lemma, there is a set $M \in \p_\mu(\kappa)$ and an $\alpha \in C$ above $\sup(M)$ such that $\cf(\alpha) \geq \mu$.  This is a contradiction.

Suppose $\kappa$ is end-extending.  Then it is $\mu$-end-extending for all $\mu<\kappa$, so by the previous paragraph, $\kappa$ is regular.  Note that $\kappa>\omega_1$, since we can take $\frak A$ on $\omega_1$ such that all substructures of $\frak A$ are transitive, while admitting gap end-extensions requires non-transitive substructures.  
If $\kappa = \nu^+$, let $f(x,y)$ be a function such that for all $\alpha<\kappa$, $f(\alpha,\cdot)$ is an injection of $\alpha$ into $\nu$.  Let $\frak A$ incorporate $f$ in its language, and let $\frak A^*$ be given by the hypothesis.  Let $M \prec \frak A^*$ be such that $\nu \in M$ and $| M \cap \nu | < \nu$.  We can inductively build $N \prec \frak A^*$ such that $N$ is cofinal in $\kappa$ and $M \sqsubseteq N$, by applying end-extendibility at successor stages and taking unions at limits.  Let $\alpha\in N$ be such that $|N \cap \alpha| \geq \nu$.  Then $f(\alpha,\cdot)$ injects $N \cap \alpha$ into $N \cap \nu$, which is a contradiction since $N \cap \nu = M \cap \nu \in [\nu]^{<\nu}$.
\end{proof}

\begin{prop}
Suppose $\kappa$ is $\mu$-end-extending.
\begin{enumerate}
\item For all infinite cardinals $\nu<\delta<\kappa$, $(\kappa,\delta) \chang (\mu,\nu)$.
\item $0^\sharp$ exists.
\item If $\mu = \kappa$, then $\kappa$ is Rowbottom and weakly $\kappa$-Mahlo.
\end{enumerate}
\end{prop}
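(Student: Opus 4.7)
For part (1), I would fix $\nu<\delta<\kappa$ and $F : \kappa^{<\omega} \to \kappa$, take a structure $\frak A$ on $\kappa$ in a countable language containing $F$, $\delta$ and Skolem functions, and let $\frak A^*$ be an expansion witnessing that $[\kappa]^{<\mu}$ admits gap end-extensions.  Starting with $M_0 = \sk^{\frak A^*}(\nu \cup \{\delta\})$, for which $|M_0 \cap \delta| = \nu$, I would build a continuous $\subseteq$-increasing chain $\langle M_\xi : \xi \leq \mu \rangle$ of elementary submodels of $\frak A^*$, each in $[\kappa]^{<\mu}$.  At a successor stage $\xi+1$, I apply the gap end-extension property to pick $\alpha_\xi > \max(\delta, \sup(M_\xi))$ and set $M_{\xi+1} = \sk^{\frak A^*}(M_\xi \cup \{\alpha_\xi\})$; the condition $M_{\xi+1} \cap \alpha_\xi = M_\xi$ together with $\delta < \alpha_\xi$ yields $M_{\xi+1} \cap \delta = M_\xi \cap \delta$.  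At limit stages I take unions, preserving both the invariant $M_\xi \cap \delta = M_0 \cap \delta$ and the size bound $|M_\xi| \leq \nu + |\xi| < \mu$.  Then $z := M_\mu$ has cardinality $\mu$, is closed under $F$, and satisfies $|z \cap \delta| = \nu$.

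For part (2), I would apply (1) with $\delta = \omega_1$ and $\nu = \omega$; this is admissible because the preceding Proposition gives $\kappa > \omega_1$ once $\mu \geq \omega_1$, which I take as the interesting case.  Taking $\frak A$ to Skolemise a coding of $L_\kappa$ onto $\kappa$, the resulting $z$ realises an elementary $M \prec L_\kappa$ of cardinality $\mu$ with $|M \cap \omega_1| = \omega$.  By condensation, the transitive collapse is $\pi \colon M \to L_{\bar\kappa}$ with $\bar\kappa = \otp(M \cap \Ord) \geq \mu \geq \omega_1$, while $\pi(\omega_1) = \otp(M \cap \omega_1) < \omega_1$.  Hence $j := \pi^{-1} \colon L_{\bar\kappa} \to L_\kappa$ is a nontrivial elementary embedding with $\crit(j) < \omega_1 \leq \bar\kappa$, and Kunen's theorem then yields $0^\sharp$.

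For part (3), now assuming $\mu = \kappa$, Rowbottom is an immediate corollary of (1): given $f : [\kappa]^{<\omega} \to \lambda$ with $\lambda < \kappa$, I apply (1) with $\delta = \lambda^+ < \kappa$ (using the preceding Proposition's weak inaccessibility) and $\nu = \omega$, placing $f$ in the language of $\frak A$.  The resulting $z$ has size $\kappa$, is closed under $f$, and $|z \cap \lambda| \leq |z \cap \lambda^+| = \omega$, so $f''[z]^{<\omega} \subseteq z \cap \lambda$ has size at most $\omega$.  For weakly $\kappa$-Mahlo, I would argue by induction on $\beta < \kappa$ that $\kappa$ is weakly $\beta$-Mahlo; the base case is weak inaccessibility, and limit $\beta$'s are immediate from the induction hypothesis.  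At a successor step, given any club $C \subseteq \kappa$, I apply Lemma \ref{statgap} to a structure coding the $\beta$-Mahlo predicate together with $C$; the high-cofinality end-extension points $\alpha \in C$ produced should, via a transitive-collapse argument modelled on part (2), themselves be weakly $\beta$-Mahlo.

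The hardest step will be this final inheritance.  Unlike the absolute first-order reflection used in part (2), weak $\beta$-Mahlo-ness is a statement about the stationarity of sets inside $V_\alpha$, so I must verify that the Skolem hulls produced by Lemma \ref{statgap} are rich enough to pin down the requisite stationary sets on the collapsed side.  Combining this with the full family of Chang conjectures $(\kappa,\delta)\chang(\kappa,\nu)$ from (1) at cardinals $\delta$ cofinal below $\alpha$ should push the reflection through, but the bookkeeping is delicate.
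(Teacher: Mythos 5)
Parts (1), (2), and the Rowbottom portion of (3) are essentially correct and track the paper's argument, with only presentational differences. For (1) your chain-of-end-extensions construction is the paper's construction. For (2) the paper simply cites Jech (Theorem 18.27 and Corollary 18.29); you unfold the reference into the condensation/Kunen argument, which is the standard content of that citation and fine. For Rowbottom you route through the Chang transfer of (1) with $\delta = \lambda^+$, whereas the paper builds the length-$\kappa$ chain directly from a countable seed $M$ with $\lambda \in M$; these are the same idea expressed differently. One small point common to both (1) proofs: the statement is only sensible when $\nu < \mu$ (otherwise $|z| = \mu$ and $|z \cap \delta| = \nu$ are incompatible), and both your proof and the paper's use $\nu < \mu$ implicitly when bounding $|M_\xi| < \mu$ along the chain; this is a reading of the proposition rather than an error.

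The weak $\kappa$-Mahlo part of (3) is a genuine gap. The paper obtains it as a black-box corollary of the Rowbottom property, invoking Shelah's theorem that a regular Rowbottom cardinal is weakly $\kappa$-Mahlo (the cited \cite{shelah}). Your inductive sketch does not substitute for that theorem, and the obstacle is more than bookkeeping. Lemma \ref{statgap} only guarantees $\cf(\alpha)\geq\mu'$ for whatever $\mu'<\kappa$ you feed it — it does not produce a regular $\alpha$, let alone one that is weakly $\beta$-Mahlo. The transitive-collapse technique from part (2) transfers first-order facts about $\alpha$ as seen in $N$ to the collapsed ordinal $\pi(\alpha)=\ot(N\cap\alpha)$, not back to $\alpha$ itself; $\alpha$ and $\pi(\alpha)$ are different ordinals and there is no mechanism in your sketch that forces $\alpha$ to inherit Mahlo-ness. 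In fact, reflecting ``$\kappa$ is weakly $\beta$-Mahlo'' down to a stationary set of $\alpha<\kappa$ is exactly the content of weak $(\beta{+}1)$-Mahlo-ness, so the inductive step as you have framed it is close to circular. The right move here is to cite the known theorem, as the paper does, or to reproduce Shelah's argument rather than attempt a direct end-extension induction.
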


\begin{proof}

For the first item, let $\frak A$ be any structure on $\kappa$ and let $\frak A^*$ be an expansion witnessing that $\kappa$ is $\mu$-end-extending.  Let $M \prec \frak A^*$ be such that $\delta \in M$ and $|M \cap \delta| = \nu$.  Build a strictly $\sqsubseteq$-increasing continuous sequence $\la M_i : i \leq \mu \ra$ with $M_0 = M$.  Then $M_\mu \prec \frak A$, $|M_\mu \cap \kappa| = \mu$, and $|M_\mu \cap \delta| = \nu$.

The second item follows from $(\kappa,\delta) \chang (\mu,\nu)$, using Theorem 18.27 and the argument for Corollary 18.29 in \cite{jech}.

For the third item, let $c$ be any coloring of the finite subsets of $\kappa$ in $\lambda$-many colors, $\lambda<\kappa$, and let $\frak A$ be a structure incorporating $c$ into its language, such that every submodel of size $<\kappa$ can be end-extended.  By repeatedly end-extending a countable $M \prec \frak A$ with $\lambda \in M$, we obtain a subset of $\kappa$ of size $\kappa$ on which $c$ takes only countably many values.  This implies that $\kappa$ is a regular Rowbottom cardinal, and by Shelah \cite{shelah}, that it is weakly $\kappa$-Mahlo.
\end{proof}

The following is inspired by arguments in \cite{cox} and \cite{sakai}.
 
\begin{prop}
Suppose $\calI$ is a $\kappa$-complete ideal on $\kappa$, $\calS^* \subseteq [H_{(2^{2^\kappa})^+}]^{<\kappa}$ is stationary, $\p(\kappa)/\calI$ is $\calS^*$-proper, and $\calS = \{ M \cap\kappa : M \in \calS^* \}$.  Then $\calS$ admits gap end-extensions.
\end{prop}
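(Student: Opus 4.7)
The plan is to adjoin to $\frak A$ the Skolem functions of a richer structure $\frak B=(H_{(2^{2^\kappa})^+},\in,\frak A,\calI,\calS^*,\p(\kappa)/\calI,<^*)$, producing an expansion $\frak A^*$ still in a countable language, and then, given $M\in\calS$ elementary in $\frak A^*$, to run a generic ultrapower argument in the spirit of Cox \cite{cox} and Sakai \cite{sakai}: $\calS^*$-properness of $\p(\kappa)/\calI$ forces $\kappa$ itself to realize a gap end-extension of $M$ in $\Ult$, and reflection by \L{}o\'s's theorem delivers cofinally many good $\alpha<\kappa$ in $V$.

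For each Skolem term $t$ of $\frak B$ add to $\frak A^*$ a function symbol $t^*$ interpreted as $t(\vec\alpha)$ when $t(\vec\alpha)\in\kappa$ and $0$ otherwise; this keeps the language countable, and every $M\prec\frak A^*$ satisfies $\sk^{\frak B}(M)\prec\frak B$ with $\sk^{\frak B}(M)\cap\kappa=M$.  Given $M\in\calS$ with $M\prec\frak A^*$, write $M=M_0\cap\kappa$ for some $M_0\in\calS^*$ and use the stationarity of $\calS^*$ to pass to $M^*\in\calS^*$ with $M^*\prec\frak B$, $\frak A^*\in M^*$, and $M^*\cap\kappa=M$.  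Apply $\calS^*$-properness to obtain a master condition $A\in\calI^+$ for $M^*$, let $G\ni A$ be generic, and let $j:V\to\Ult$ be the associated well-founded generic ultrapower with $\crit(j)=\kappa=[\id]_G$.  An ordinal of $\sk^{j(\frak A^*)}(M\cup\{\kappa\})$ not already in $M$ has the form $j(f)(\vec x,\kappa,\vec y)=[g]_G$ with $g(\alpha):=f(\vec x,\alpha,\vec y)\in M^*$.  If $[g]_G<\kappa$, then the restriction to $M^*$ of a maximal antichain deciding the $\p(\kappa)/\calI$-name for $[g]_G$ is maximal in $\p(\kappa)/\calI\cap M^*$ by elementarity, so by genericity of $G\cap M^*$ some $X\in G\cap M^*$ forces $[g]_{\dot G}=\beta$ for some $\beta\in M^*\cap\kappa=M$. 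Hence
\[
\sk^{j(\frak A^*)}(M\cup\{\kappa\})\cap\kappa=M\quad\text{in }\Ult.
\]

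Letting $\phi(\alpha)$ be the first-order property $\sk^{\frak A^*}(M\cup\{\alpha\})\cap\alpha=M$ (in $V$, with parameters $M$ and $\frak A^*$), the display reads $\Ult\models\phi([\id]_G)$, so by \L{}o\'s $A\Vdash\{\alpha<\kappa:\phi(\alpha)\}\in\dot G$; that is, $A\setminus\{\alpha:\phi(\alpha)\}\in\calI$. Since $A\in\calI^+$ and $\calI$ is $\kappa$-complete (so every bounded subset of $\kappa$ lies in $\calI$), the ``good'' set $\{\alpha<\kappa:\phi(\alpha)\}$ is $\calI$-positive and hence cofinal in $\kappa$; for each good $\alpha$ the model $N:=\sk^{\frak A^*}(M\cup\{\alpha\})$ provides the required end-extension.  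The main delicate step is the pinning $[g]_G\in M$: this is precisely the content of $\calS^*$-properness for names whose interpretation falls below $\kappa$, and it is where the exact formulation of ``$\calS^*$-proper'' must be matched to the master-condition framework of Foreman, Cox, and Sakai.
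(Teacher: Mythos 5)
Your proposal captures the essential strategy of the paper's proof: expand $\frak A$ using the (restricted) Skolem functions of an auxiliary structure $\frak B$ on $H_{(2^{2^\kappa})^+}$ equipped with a well-order, use $\calS^*$-properness to obtain a master condition for a suitable model $M^*$, pass to the generic ultrapower $j : V \to \Ult$ with $\kappa = [\id]_G$, argue that $\sk^{j(\frak A^*)}(M\cup\{\kappa\}) \cap \kappa = M$ by pinning $\p(\kappa)/\calI$-names for ordinals below $\kappa$ inside $M^*$, and then reflect.  Your observation that the well-order is what lets the $\frak B$-Skolem functions collapse to a countable expansion $\frak A^*$ on $\kappa$ is exactly the paper's move.

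There are two points of divergence.  For the reflection, you invoke \L o\'s's theorem for the formula $\phi$ and then use $\kappa$-completeness to promote $\calI$-positivity to unboundedness (which tacitly also needs singletons to be $\calI$-null); the paper instead applies the elementarity of $j$ directly to the statement that $\kappa$ witnesses $\exists\beta > j(\xi)$ with $\sk^{j(\frak A^*)}(j(N)\cup\{\beta\})\cap\beta = j(N)$, which is shorter but equivalent.  The more substantive issue is your step ``use the stationarity of $\calS^*$ to pass to $M^*\in\calS^*$ with $M^*\prec\frak B$, $\frak A^*\in M^*$, and $M^*\cap\kappa=M$.''  This does not follow from stationarity: given a fixed $M\in\calS$ you can certainly find $M^*\in\calS^*$ elementary in $\frak B$, but stationarity gives no control over the trace $M^*\cap\kappa$, and your pinning argument genuinely needs the exact equality $M^*\cap\kappa = M$ to conclude that the decided value $\beta$ lies in $M$ rather than merely in $M^*\cap\kappa$.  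The paper sidesteps this by reversing the order of quantification: it picks $N^*\in\calS^*$ elementary in $\frak B$ first and only then sets $N := N^*\cap\kappa$, proving the conclusion for such $N$.  Restructuring your argument in the same order (start from $M^*$, derive $M$) removes the gap.
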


\begin{proof}
First we claim that $\calI$ is precipitous.  Let $X$ be an $\calI$-positive set, and suppose $X \Vdash$ ``$\vec\tau = \la \tau_n : n \in \omega \ra$ is a descending sequence of ordinals in the generic ultrapower.''  Let $\calD_n$ be the dense set of conditions deciding $\tau_n = \check f$ for some function $f$ on $\kappa$.  
Let $\frak A = \la H_{(2^{2^\kappa})^+},\in,\lhd,\calI,X,\vec\tau \ra$, where $\lhd$ is a well-order, and let $M \prec \frak A$ be in $\calS^*$.  Let $Y \subseteq X$ be $(M,\p(\kappa)/\calI)$-generic.

Note that for each dense set $\calD \subseteq \p(\kappa)/\calI$ in $M$, $Y \setminus \bigcup (M \cap \calD) \in \calI$.  Otherwise, there is an $\calI$-positive $Z \subseteq Y$ such that $Z \cap A = \emptyset$ for each $A \in \calD \cap M$.  But this contradicts that $Z$ is $(M,\p(\kappa)/\calI)$-generic.
By $\kappa$-completeness, there is $\alpha \in Y$ such that for each dense $\calD \in M$, there is $A \in \calD \cap M$ with $\alpha \in A$.  Let $\calU = \{ A \in \p(\kappa) \cap N : \alpha \in A \}$.  Then $\calU$ is an $(M,\p(\kappa)/I)$-generic ultrafilter containing $X$.  For each $n$, let $f_n$ be a function such that some $A \in \calD_n \cap \calU$ decides $\tau_n = \check f_n$.  By the statement forced by $X$, $\{ \beta : f_{n+1}(\beta) \in f_n(\beta) \} \in \calU$ for each $n$.  So $\la f_n(\alpha) : n \in \omega \ra$ is a descending sequence of ordinals, a contradiction.

Now let $\frak A$ be a structure on $\kappa$ let $\xi<\kappa$.  Let $\frak B = \la H_{(2^{2^\kappa})^+},\in,\lhd,\calI,\frak A \ra$, where $\lhd$ is a well-order.  Let $\frak A^*$ be the result of restricting the Skolem functions of $\frak B$ to $\kappa$.
Let $N^* \in \calS^*$ be elementary in $\frak B$, and let $G$ be a $\p(\kappa)/\calI$-generic filter with a master condition for $N^*$.  Let $N=N^*\cap\kappa\prec\frak A^*$. 

Let $j : V \to M$ be the generic ultrapower embedding.  Since $|N^*|<\kappa$, $j(N^*) = j[N^*]$.  Suppose $\alpha$ is an ordinal in $Q = \sk^{j(\frak B)}(j(N^*) \cup \{\kappa \})$.  Then there is a function $f : \kappa \to \ord$ in $N^*$ such that $\alpha = j(f)(\kappa)$.  Let $\tau \in N^*$ be a $\p(\kappa)/\calI$-name for $j_{\dot{G}}(f)(\kappa)$.  Since $G$ has a master condition for $N^*$, $\tau^G \in N^*$.  Since $j(N^*) \cap \kappa = N^* \cap \kappa = N$, we have $Q \cap \kappa = N$.  By elementarity, there is $\beta$ such that $\xi<\beta<\kappa$ and $\sk^{\frak A^*}(N \cup \{\beta\}) \cap \beta = N$.
\end{proof}

It follows that measurable cardinals are end-extending.  But since $\kappa$ being end-extending depends only on $\p(\kappa)$, reflection shows that every normal ultrafilter on $\kappa$ has a measure-one set of end-extending cardinals.
  Furthermore, since we can consistently have weakly inaccessible cardinals carrying $\omega_1$-saturated ideals, end-extending cardinals are not necessarily strong limit.  Even successor cardinals can be end-extending in limited degrees.  For example, the statement that $\omega_2$ is $\omega_1$-end-extending is easily seen to be equivalent to the notion $\mathrm{SCC}^{\cof}_{\mathrm{gap}}$ defined in \cite{cox}.    It follows from the above proposition that if $\kappa$ carries a $\kappa$-complete ideal $\calI$ such that $\p(\kappa)/\calI$ is a proper forcing, then $\kappa$ is $\omega_1$-end-extending.  If $\kappa$ is measurable and $\mu<\kappa$ is regular and uncountable, then this is forced by the L\'evy collapse $\col(\mu,{<}\kappa)$ (see \cite{foreman}).

Let us now see how this notion can be used with Neeman's forcing.  Suppose $\theta^{<\theta} = \theta$ and $\alpha^\omega<\theta$ for each $\alpha<\theta$.
Let $\vec W$ be a filtration of $H_\theta$, and let $\frak A = \la H_\theta,\in,\vec W \ra$.  Let $\calS$ be the set of countable $M \prec \frak A$, and let $\calT$ be the set of $W_\alpha \prec \frak A$ such that $\cf(\alpha)$ is uncountable.  Then according to the terminology of the previous section, $\la \omega,\omega_1,\theta,\vec W,\calS,\calT \ra$ is nice.  Let us set $\mathbb C_{\vec W} = \mathbb P^\mathrm{dec*}_{\omega,\calS,\calT}$ under these hypotheses.  
If $\theta$ is inaccessible, let $\mathbb C_\theta =\mathbb C_{\vec W}$ for $\vec W = \la V_\alpha : \alpha<\theta\ra$.

\begin{theorem}
\label{wps-ee}
If $\theta$ is an inaccessible $\omega_1$-end-extending cardinal, or a stationary limit of such cardinals, then $\mathbb C_\theta$ forces that $\ns_{\omega_1}$ is weakly presaturated.  If $\theta^{<\theta} = \theta$, $\alpha^\omega<\theta$ for all $\alpha<\theta$, and $\theta$ is $\omega_1$-end-extending, then the same conclusion is forced by $\mathbb C_{\vec W}$, where $\vec W$ is any filtration of $H_\theta$.
\end{theorem}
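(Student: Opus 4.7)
The plan is to reduce the weak presaturation claim to a density statement in $\mathbb C_\theta$ and then use $\omega_1$-end-extendibility to manufacture the needed witnessing models.  By Corollary \ref{wps-wcc} and the equivalence in Lemma \ref{wps-equivs}(\ref{otdom}), weak presaturation of $\ns_{\omega_1}$ in $V[G]$ amounts to showing that for every stationary $B \subseteq \omega_1$ and every $f : \omega_1 \to \omega_1$ in $V[G]$, the set
\[ \{ M \in \calS^G_\theta : M \cap \omega_1 \in B \wedge f(M \cap \omega_1) < \ot(M \cap \theta) \} \]
is stationary in $V[G]$.  Lemmas \ref{*proj} and \ref{statpres} handle the canonical projection side and reduce matters to the following density statement in the forcing: given $p_0 \in \mathbb C_\theta$ and names $\dot B, \dot f, \dot F$ (for a stationary subset of $\omega_1$, a function $\omega_1 \to \omega_1$, and a function $\theta^{<\omega} \to \theta$ generating a club), produce $q \leq p_0$ and a countable $M \in \calS$ adjoined to $q$'s model sequence such that $q$ forces $M \cap \omega_1 \in \dot B$ and $\dot f(M \cap \omega_1) < \ot(M \cap \theta)$; closure of $M$ under $\dot F$ will be automatic from strong properness.

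The construction, for the case $\theta$ inaccessible and $\omega_1$-end-extending, proceeds in $V$ in four steps.  First, exploit the fact that $\mathbb C_\theta$ preserves stationary subsets of $\omega_1$ (a consequence of strong properness) to choose a countable $N_0 \prec \la H_{\theta^+}, \in, \lhd, \vec W, \mathbb C_\theta, p_0, \dot B, \dot f, \dot F \ra$ together with a master condition $p_1 \leq p_0$ for $M_0 := N_0 \cap H_\theta$ satisfying $p_1 \Vdash M_0 \cap \omega_1 \in \dot B$; write $\alpha = M_0 \cap \omega_1$.  Second, extend $p_1$ to $p_2$ that decides $\dot f(\alpha) = \gamma$ for some specific $\gamma < \omega_1$.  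Third, fix a bijection $\pi : H_\theta \to \theta$ that fixes ordinals (possible because $|H_\theta| = \theta$) and sends $H_{\omega_1} \setminus \omega_1$ into $\theta \setminus \omega_1$; set up a structure $\frak D^*$ on $\theta$ in a countable language that encodes the relevant objects, has $\pi(p_2)$ as a parameter, and witnesses $\omega_1$-end-extendibility, where by Lemma \ref{statgap} I may arrange gap end-extensions past ordinals of cofinality $\geq \omega_1$.  Starting from $\pi[M_0]$ with $\pi(p_2)$ adjoined, iteratively apply gap end-extensions past successive such ordinals for more than $\gamma$ stages, taking unions at countable limits; because each step adds only ordinals $\geq \omega_1$, the intersection with $\omega_1$ stays at $\alpha$ while $\ot$ grows beyond $\gamma$, yielding $\bar M \subseteq \theta$ with $\bar M \cap \omega_1 = \alpha$, $\pi(p_2) \in \bar M$, and $\ot(\bar M) > \gamma$.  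Let $M = \pi^{-1}[\bar M] \in \calS$.  Finally, since $p_2 \in M$ and the models in $p_2$'s model sequence all lie in $M_0 \subseteq M$, Lemma \ref{spdec} lets me form $q$ by adjoining $M$ at the top of $p_2$'s model sequence, giving a strong master condition for $M$ that forces $M \cap \omega_1 = \alpha \in \dot B$ and $\dot f(M \cap \omega_1) = \gamma < \ot(M \cap \theta)$.

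The main obstacle is the chicken-and-egg between Steps 1--2 and Step 3: the decision $p_2$ depends on $\alpha = M_0 \cap \omega_1$, yet $p_2$ must appear as an element of the final $M$ for Lemma \ref{spdec} to permit the adjunction.  This is resolved by constructing $\frak D^*$ only after $p_2$ is chosen and by arranging the coding $\pi$ so that $\pi(p_2)$ lies above $\omega_1$, ensuring that absorbing it into the Skolem closure of $\pi[M_0]$ does not add any countable ordinals below $\omega_1$.  For the case when $\theta$ is a stationary limit of inaccessible $\omega_1$-end-extending cardinals, the plan is to reflect down: given $\dot B, \dot f \in V[G]$, use $\theta$-c.c.\ to select $\delta < \theta$ that is inaccessible and $\omega_1$-end-extending with $V_\delta \in \calT$ and $\dot B, \dot f \in V[G \cap V_\delta]$, run the construction above inside $V_\delta$ in place of $H_\theta$, and lift the resulting stationarity from $V[G \cap V_\delta]$ to $V[G]$ via Lemma \ref{statpres}.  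The $\mathbb C_{\vec W}$ case is entirely parallel; the hypothesis $\theta^{<\theta} = \theta$ ensures $|H_\theta| = \theta$ so that the required coding $\pi$ exists.
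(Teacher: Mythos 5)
Your overall reduction is correct and matches the paper: use Lemmas \ref{*proj}, \ref{statpres}, and \ref{wps-equivs} to reduce weak presaturation of $\ns_{\omega_1}$ to the density statement about producing conditions in which a model $M$ with $M\cap\omega_1\in\dot B$ and $f(M\cap\omega_1)<\ot(M\cap\theta)$ appears, and the reflection argument for the stationary-limit case is also the paper's.

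However, Step~3 has a genuine gap, and it is exactly the chicken-and-egg problem you flag, which your proposed resolution does not actually resolve.  The $\omega_1$-end-extending property produces gap end-extensions only of models that are \emph{already elementary} in the witnessing expansion.  If $\frak D^*$ has $\pi(p_2)$ as a parameter, then $\pi[M_0]$ is not elementary in $\frak D^*$ (it omits the constant $\pi(p_2)$), so you cannot feed $\pi[M_0]$ to the end-extendibility machinery.  If instead you start from the Skolem closure $M' = \sk^{\frak D^*}(\pi[M_0]\cup\{\pi(p_2)\})$, there is no reason $M'\cap\omega_1$ should still equal $\alpha$: nothing in the definition of $\omega_1$-end-extending allows you to throw in an arbitrary pre-specified ordinal $\pi(p_2)$ and preserve the intersection with $\omega_1$ --- that property is guaranteed only for the cofinally many ordinals $\alpha_0$ that the gap end-extension chooses for you, not for an externally given one.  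Arranging $\pi(p_2) \geq \omega_1$ does not help, since Skolem functions of $\frak D^*$ applied to $\pi(p_2)$ and parameters in $\pi[M_0]$ can output countable ordinals in $[\alpha,\omega_1)$.  Moreover, your final assertion that ``the models in $p_2$'s model sequence all lie in $M_0$'' is not justified: $p_2$ must decide $\dot f(\alpha)$, and $\alpha\notin M_0$, so this decision generally requires models above $M_0$; in fact there is a direct conflict between absorbing $p_2$ into $M$ and end-extending $M_0$ past the rank of $p_2$'s models, since the latter forces $M\cap W_\delta = M_0$ for large $\delta$ while the former would put $p_2$'s models (of rank ${<}\delta$) inside $M$ but outside $M_0$.

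The paper sidesteps all of this by \emph{not} requiring the condition to be an element of the end-extended model.  Instead it (i) picks the special model $M$ to be a model already appearing in $s_1$, elementary in a fixed expansion $\frak A^{**}$ that does not mention $p_1$, and (ii) inserts a transitive model $W_{\alpha_0}$ between $s_1$ and the end-extension $M^*$, with $\alpha_0$ the first new ordinal of the end-extension and $p_1\in W_{\alpha_0}$.  Then $M^*\cap W_{\alpha_0}=M\in s_1$, and for any $Q\in s_1$, $M^*\cap Q = M^*\cap W_{\alpha_0}\cap Q = M\cap Q \in s_1$ because $s_1$ is closed under intersections; so the new model set $s_1\cup\{W_{\alpha_0},M^*\}$ is closed under intersections without ever needing $p_1\in M^*$.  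Your argument would go through if you adopted this device: end-extend $M_0$ (elementary in a structure chosen \emph{before} $p_2$) past the least $\beta$ with $p_2\in W_\beta\in\calT$, and adjoin both $W_{\alpha_0}$ (the first new ordinal of the end-extension, which is ${\geq}\beta$) and the end-extension to $s_2$.
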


\begin{proof}
Let us first show the latter statement.
Let $G \subseteq \mathbb C_{\vec W} = \mathbb P^\mathrm{dec*}_{\omega,\calS,\calT}$ be generic, and let $\calS^G_\theta$ be the collection of $M \in \calS$ that appear in $G$.  By Lemma \ref{cards}, $\mathbb P$ forces $\theta$ to become $\omega_2$.
By Lemma \ref{*proj}, $V[G]$ satisfies that $\ns_{\omega_1}$ is the canonical projection of $\ns \rest \calS^G_\theta$.
By Lemma \ref{wps-equivs}, it suffices to show that for all stationary $A \subseteq \omega_1$ and $f : \omega_1 \to \omega_1$, $\{ M \in \calS^G_\theta : M \cap \omega_1 \in A \wedge f(M \cap \omega_1) <\ot(M \cap \theta) \}$ is stationary.

Let $\dot A$ be a name for a stationary subset of $\omega_1$, $\dot f$ a name for a function from $\omega_1$ to itself, and $\dot F$ a name for a function from $(H_\theta^V)^{<\omega}$ to $H_\theta^V$.  Let $p_0 \in \mathbb C_{\vec W}$ be arbitrary.  Let $\lambda > \theta$ be regular and let $\frak B = \la H_\lambda,\in,\frak A,p_0,\dot f,\dot F,\lhd \ra$, where $\lhd$ is a well-order.  Let $\frak A^*$ be the result of restricting the Skolem functions for $\frak B$ to $H_\theta$.  Let $\frak A^{**}$ be an expansion of $\frak A^*$ given by the $\omega_1$-end-extending hypothesis.

In $V[G]$, there is some $M \prec \frak A^{**}$ such that $M \in S^G_\theta$ and $M \cap \omega_1 \in A$.  Let $p_1 \leq p_0$ decide the value of such $M$, force that $M \cap \omega_1 \in \dot A$, and decide $\dot f(M \cap \omega_1) = \xi$ for some ordinal $\xi<\omega_1$.  We may assume that $p_1$ takes the form $\la s_1,g \ra$ with $M \in s_1$.

Let $\alpha$ be least such that $p_1 \in W_\alpha\in\calT$.  By end-extendibility, there is $M_1 \prec \frak A^{**}$ such that $M \sqsubseteq M_1$, and if $\alpha_0$ is the least ordinal in $M_1 \setminus M$, then $\alpha\leq\alpha_0$.
By Lemma \ref{rankmeet}, $W_{\alpha_0} \in \calT$.  Now repeat this $(\xi+1)$-many times to obtain a continuous $\subseteq$-increasing sequence of models $\la M_i : i \leq \xi+1 \ra$, all elementary in $\frak A^{**}$, with $M_0 = M$, and a corresponding sequence of ordinals $\la \alpha_i : i \leq \xi \ra$ such that $\alpha_i \in M_{i+1}$, $M_{i+1} \cap W_{\alpha_i} = M_i$.  Let $M^* = M_{\xi+1}$.  Then $M^* \cap W_{\alpha_0} = M$, and $\ot(M^* \cap \theta) > \xi$.

Let $s_2 = s_1 \cup \{ W_{\alpha_0}, M^* \}$.  Clearly, $s_2$ is an $\in$-chain.  To see that it is closed under intersections, note that $W_{\alpha_0} \cap M^* = M \in s_1$, and for any $Q \in s_1$, $Q \cap W_{\alpha_0} = Q$ and $M^* \cap Q = M^* \cap W_{\alpha_0} \cap Q = M \cap Q \in s_1$.  
 Then $p_2 = \la s_2,g \ra$ is a condition below $p_1$.

Let $N^* = \sk^{\frak B}(M^*)$.  Since $p_2$ is a master condition for $N^*$ and $\dot F \in N^*$,  $p_2$ forces that $N^*[G]\cap H_\theta^V = M^*$, so it forces $M^*$ to be closed under $\dot F$.  Also, $p_2$ forces $M^*$ to be in $\calS^G_\theta$ and $M \cap \omega_1 = M^* \cap \omega_1 \in \dot A$.  Since $p_0$ and $\dot F$ were arbitrary, it is forced that the set 
$\{ Q \in \calS^G_\theta : Q \cap \omega_1 \in A \wedge f(Q \cap \omega_1) < \ot(Q \cap \theta) \}$
is stationary.

Now suppose that $\theta$ is an inaccessible stationary limit of regular $\omega_1$-end-extending cardinals.  
Let $\dot A,\dot f,\dot F,p_0$ be as above.  
By the chain condition, we may assume that $\dot A,\dot f,p_0 \in V_\theta$, and on a club of $\alpha<\theta$, $\dot F \cap V_\alpha$ is forced to be a name for $\dot F \rest V_\alpha$.  We can find an inaccessible $\kappa<\theta$ such that $\kappa$ is $\omega_1$-end-extending, $\dot A,\dot f,p_0 \in V_\kappa \in \calT$, and $\dot F \cap V_\kappa$ is a $\mathbb C_\kappa$-name for a function from $V_\kappa^{<\omega}$ to $V_\kappa$.  By the arguments of the previous paragraphs, there is $q \leq p_0$ in $V_\kappa$ that forces some $M \in \calS \cap V_\kappa$ to appear in the generic $G \cap V_\kappa$, be closed under $\dot F$, and have the properties that  $M \cap \omega_1 \in \dot A$ and  $\dot f(M \cap \omega_1) < \ot(M \cap \kappa)$.  Since $V_\kappa \in \calT$, 
$\mathbb C_\kappa$ is a regular suborder of $\mathbb C_\theta$,
and any generic $G \subseteq \mathbb C_\theta$ possessing $q$ will yield an extension satisfying these statements.  By the arbitrariness of $p_0$ and $\dot F$, the collection of such models $M$ is forced to be stationary.
\end{proof}

We remark that above forcing does not necessarily render $\ns_{\omega_1}$ precipitous.  Claverie and Schindler \cite{clav} showed that a precipitous weakly presaturated ideal (a.k.a.\ ``strong ideal'') is equiconsistent with a Woodin cardinal.

Another strengthening of weak presaturation for $\ns_{\omega_1}$ is the statement that for every $f : \omega_1 \to \omega_1$, there is a club $C \subseteq \omega_1$ and a canonical function $\psi_\alpha$ such that $f(\beta) < \psi_\alpha(\beta)$ for all $\beta \in C$.
Deiser-Donder \cite{dd} and Larson-Shelah \cite{ls} showed that this property, which the latter call ``Bounding'', is equiconsistent with an inaccessible limit of measurable cardinals.  

Besides consistency strength considerations, there is a combinatorial reason why $\mathbb C_\theta$ does not necessarily force Bounding.
Suppose $\Diamond_{\omega_1}$ holds in $V$.  Let $\la x_\alpha : \alpha < \omega_1 \ra$ be a diamond sequence.  Define a function $f : \omega_1 \to \omega_1$ as follows.  If $x_\alpha$ codes, via the G\"odel pairing function, a countable transitive set $N$, let $f(\alpha) = \ot(N \cap \omega_1)$.  Otherwise, let $f(\alpha) = 0$.

Let $\theta > \omega_1$ be regular and let $\frak A$ be a structure on $H_\theta$ in a countable language. 
Let $\la M_i : i < \omega_1 \ra$ be a continuous, $\in$-increasing sequence of countable elementary substructures of $\frak A$.
Let $C \subseteq \omega_1$ be the club of $\alpha$ such that $M_\alpha \cap \omega_1 = \alpha$.
Let $M$ be the union of the $M_i$, and let $\bar M$ be the transitive collapse of $M$.
Let $A \subseteq \omega_1$ code the $\in$-structure of $\bar M$ via the pairing function.
There is a club $D \subseteq C$ such that for all $\alpha \in D$, $A \cap \alpha$ codes a structure isomorphic to $M_\alpha$.  There is some $\alpha \in D$ such that $x_{\alpha} = A \cap \alpha$.  
We have $f(\alpha) = f(M_\alpha \cap \omega_1) =  \ot(M_{\alpha} \cap \theta)$.
Thus there are stationary-many $z \in \p_{\omega_1}(\theta)$ such that $f(z \cap \omega_1) = \ot(z \cap \theta)$.

If $\mathbb P$ is a proper forcing, then this continues to hold in $V^{\mathbb P}$ with respect to the same $f$.  In particular, it holds in any extension by $\mathbb C_\theta$.
Let $G \subseteq \mathbb P$ be generic.  There is some $M \prec H_{\omega_2}^{V[G]}$ such that $f \in M$, and $f(M \cap \omega_1) = \ot(M \cap \omega_2)$.
If Bounding were to hold in $V[G]$, then by elementarity, there would be an ordinal $\gamma \in M$ and a club $C \in M$ such that $M \models$ ``$f(\beta) < \psi_\gamma(\beta)$ for all $\beta \in C$.''  But $\alpha = M \cap \omega_1 \in C$ and $\psi_\gamma(\alpha) = \ot(M \cap \gamma) < \ot(M \cap \omega_2) = f(\alpha)$, a contradiction.

\section{$\wcc(\omega_2,\cof(\omega_1))$ from $(+2)$-subcompactness}
\label{sec:wccomega_2}

As far as we are able to ascertain, until the time of this writing, the best known upper bounds for the consistency strength of $\wcc(\mu^+,\cof(\mu))$, for $\mu$ regular uncountable, remained what was discovered in the late 1970s, namely an almost-huge cardinal.  Kunen \cite{kunen} developed a forcing strategy that collapses a huge cardinal $\kappa$ to become the successor of a chosen regular cardinal $\mu<\kappa$, and forces Chang's Conjecture $(\kappa^+,\kappa) \chang (\mu^+,\mu)$ and the existence of a saturated ideal on $\kappa$.  Shortly thereafter, Magidor showed that the saturated ideal, and consequently $\wcc(\kappa,\cof(\mu))$, can be obtained from an almost-huge $\kappa$ (see \cite{foreman}).  In \cite{eh}, the author and Hayut showed by a reflection argument that the strength of $(\omega_3,\omega_2) \chang (\omega_2,\omega_1)$ is weaker than a huge cardinal, and that a single huge cardinal can be used to get Chang's Conjecture between many pairs of cardinals simultaneously, but the reflection did not take us down to the level of almost-huge or lower.  On the other hand, we showed that if we allow some distance between the pairs of cardinals, the strength can be shown to be much lower by a very different forcing argument.  In particular, we got a model of the generalized continuum hypothesis (GCH) plus $(\omega_4,\omega_3) \chang (\omega_2,\omega_1)$ from a model of GCH with a $(+2)$-subcompact cardinal.  We get some more information using an observation of Adolf \cite{adolf}:

\begin{lemma}[Adolf]
Suppose $2^{\kappa^+} = \kappa^{++}$ and $(\kappa^{+++},\kappa^{++}) \chang (\kappa^+,\kappa)$.  Then for every regular $\mu\leq\kappa$ the set of $M \prec H_{\kappa^{+3}}$ such that $M \cap \kappa^+ \in \kappa^+$, $\cf(M \cap \kappa^{+}) = \mu$, $\cf(M \cap \kappa^{++}) = \kappa$, and $\ot(M \cap \kappa^{+3}) = \kappa^+$, is stationary.
\end{lemma}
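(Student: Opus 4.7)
The plan is to apply Chang's conjecture $(\kappa^{+3}, \kappa^{++}) \chang (\kappa^+, \kappa)$ to a suitably Skolemized structure, build a chain to control the cofinality of $M \cap \kappa^+$, and take a final Skolem hull with $\kappa$ to force $M \cap \kappa^+$ to be an ordinal. Fix $\mu \leq \kappa$ regular, an arbitrary $F \colon H_{\kappa^{+3}}^{<\omega} \to H_{\kappa^{+3}}$ defining a club, and a well-order $\lhd$ of $H_{\kappa^{+3}}$, and form the countable-language Skolemized structure $\frak A = \la H_{\kappa^{+3}}, \in, \lhd, F, \kappa, \kappa^+, \kappa^{++} \ra$. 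By $2^{\kappa^+} = \kappa^{++}$, every $\beta \in [\kappa^+, \kappa^{+3})$ admits a $\lhd$-least bijection $h_\beta \colon |\beta| \to \beta$ that is definable from $\beta$. Applying Chang to a single coding of the ordinal-valued Skolem terms of $\frak A$ yields $N_0 \prec \frak A$ with $|N_0| = \kappa^+$, $|N_0 \cap \kappa^{+3}| = \kappa^+$, and $|N_0 \cap \kappa^{++}| = \kappa$. The bijections then force $N_0 \cap \alpha = h_\alpha[N_0 \cap \kappa^{++}]$ for every $\alpha \in N_0 \cap [\kappa^{++}, \kappa^{+3})$, whence $|N_0 \cap \alpha| \leq \kappa$ and $\ot(N_0 \cap \kappa^{+3}) = \kappa^+$.

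Build a continuous $\in$-increasing elementary chain $\la N_\xi : \xi \leq \mu \ra$ starting from $N_0$, taking $N_{\xi+1} = \sk^{\frak A}(N_\xi \cup \{\alpha_\xi\})$ for some $\alpha_\xi \in \kappa^+$ strictly above $\sup(N_\xi \cap \kappa^+)$. The crucial stability property, which uses regularity of $\kappa^{++}$ applied to the restrictions of $g \in N_\xi$ to domains of size at most $\kappa^+$, is that every Skolem-term value landing in $\kappa^{++}$ is bounded by some $\beta_g \in N_\xi \cap \kappa^{++}$; since $\sup(N_\xi \cap \kappa^{++}) \notin N_\xi$, this forces $\beta_g < \sup(N_\xi \cap \kappa^{++})$. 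Hence $\sup(N_\xi \cap \kappa^{++})$ remains constantly equal to $\sup(N_0 \cap \kappa^{++})$ throughout the chain, while $\sup(N_\xi \cap \kappa^+)$ strictly grows and satisfies $\cf(\sup(N_\mu \cap \kappa^+)) = \mu$.

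Set $M = \sk^{\frak A}(N_\mu \cup \kappa)$. Since $\kappa \subseteq M$, the bijections $h_\alpha \colon \kappa \to \alpha$ for $\alpha \in N_\mu \cap [\kappa, \kappa^+)$ yield $\alpha \subseteq M$; the same regularity-of-$\kappa^+$ argument applied now to $\kappa$-sized inputs keeps new elements of $M \cap \kappa^+$ strictly below $\sup(N_\mu \cap \kappa^+)$, so $M \cap \kappa^+ = \sup(N_\mu \cap \kappa^+) \in \kappa^+$ with cofinality $\mu$. Using the bijections $h_\beta \colon \kappa^+ \to \beta$ for $\beta \in N_\mu \cap [\kappa^+, \kappa^{++})$ cofinal in $\sup(N_\mu \cap \kappa^{++})$, together with $|M \cap \kappa^+| \leq \kappa$, one obtains $|M \cap \kappa^{++}| \leq \kappa$ and $\cf(\sup(M \cap \kappa^{++})) = \cf(\sup(N_0 \cap \kappa^{++}))$, while the analogous bijections in the top range give $\ot(M \cap \kappa^{+3}) = \kappa^+$. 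The chief obstacle is arranging that the initial Chang witness satisfies $\cf(\sup(N_0 \cap \kappa^{++})) = \kappa$: this requires showing that Chang witnesses realize each regular cofinality $\leq \kappa$ stationarily often in $\kappa^{++}$, which is where the combined strength of Chang's conjecture and $2^{\kappa^+} = \kappa^{++}$ is most essentially invoked.
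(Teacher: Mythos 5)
The paper cites this lemma from Adolf without supplying a proof, so I am evaluating your argument on its own terms. Your framework is the natural one: obtain a Chang witness for a Skolemized structure, stabilize $\sup(N\cap\kappa^{++})$ while raising the cofinality at $\kappa^+$ via a continuous $\mu$-chain, then hull with $\kappa$ to force $M\cap\kappa^+$ to be an ordinal. The regularity arguments you give for why $\sup(N_\xi\cap\kappa^{++})$ is preserved along the chain (Skolem-term values into $\kappa^{++}$ from $\kappa^+$-sized input domains are bounded by an ordinal definable in $N_\xi$) and why $M\cap\kappa^+$ becomes an ordinal of cofinality $\mu$ are correct. Likewise, once $\cf(\sup(M\cap\kappa^{++}))\le\kappa$ and $M\cap\kappa^+\in\kappa^+$ are in hand, $|M\cap\kappa^{++}|\le\kappa$ follows (each $M\cap\delta=h_\delta[M\cap\kappa^+]$ for $\delta\in M\cap[\kappa^+,\kappa^{++})$ has size $\le\kappa$ and there is a $\le\kappa$-cofinal family of such $\delta$), which then gives $\ot(M\cap\kappa^{+3})=\kappa^+$.

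The genuine gap is the one you flag yourself: you never prove that the initial Chang witness $N_0$ can be chosen with $\cf(\sup(N_0\cap\kappa^{++}))=\kappa$. Chang's Conjecture only gives $|N_0\cap\kappa^{++}|=\kappa$, hence $\cf(\sup(N_0\cap\kappa^{++}))\le\kappa$, and nothing written rules out that this cofinality is always, say, $\omega$. This is precisely where the substance of Adolf's lemma lies, and a naive repair — building a length-$\kappa$ chain adding ordinals $\beta_\xi>\sup(N_\xi\cap\kappa^{++})$ below $\kappa^{++}$ — does not obviously work, because adding such a $\beta_\xi$ to $N_\xi$ can inflate $|N_{\xi+1}\cap\kappa^{++}|$ to $\kappa^+$: there are $\kappa^+$-many parameters $\gamma\in N_\xi\cap[\kappa^{++},\kappa^{+3})$, and Skolem terms such as $h_\gamma^{-1}(\beta_\xi)$ can produce $\kappa^+$-many new ordinals below $\kappa^{++}$, destroying the order-type conclusion. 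Note also that the hypothesis $2^{\kappa^+}=\kappa^{++}$ has not actually been used anywhere in your argument — the $\lhd$-least bijections exist from the well-order alone — so the one place you invoke it is a red herring, and its genuine use must occur in exactly the step you skip. Labeling this step a ``chief obstacle'' and describing what it ``requires'' is not a proof; as written, the lemma is not established.
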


Let us show that the above result yields many witnesses to Chang's Conjecture that are countably closed.  Suppose GCH holds and $M \prec H_{\omega_4}$ is such that $\omega_1 \subseteq M$ and $\cf(M \cap \kappa)>\omega$ for each uncountable cardinal $\kappa \in M$.  By CH, $M$ contains all reals.  Since $\cf(M \cap \omega_2)$ is uncountable, $[\omega_2]^{<\omega_1} \cap M = \bigcup \{ [\alpha]^{<\omega_1} : \alpha \in M \cap \omega_2 \} = [M \cap \omega_2]^{<\omega_1}$.  Similarly, $[\omega_3]^{<\omega_1} \cap M = \bigcup \{ [\alpha]^{<\omega_1} \cap M : \alpha \in M \cap \omega_3 \}$, and for any $\alpha \in [\omega_2,\omega_3) \cap M$, a bijection $f : \omega_2 \to \alpha$ in $M$ yields that $[\alpha]^{<\omega_1} \cap M = \{ f[z] : z \in [M \cap \omega_2]^{<\omega_1} \} = [M \cap \alpha]^{<\omega_1}$.  Thus $[\omega_3]^{<\omega_1} \cap M = [M \cap \omega_3]^{<\omega_1}$.  Similarly, $[\omega_4]^{<\omega_1} \cap M = [M \cap \omega_4]^{<\omega_1}$.  Thus $M$ is countably closed.

Now assume GCH and $(\omega_4,\omega_3) \chang (\omega_2,\omega_1)$.  Let us introduce a Neeman forcing.  Let $\vec W$ be a filtration of $H_{\omega_4}$, and let $\frak A = \la H_{\omega_4},\in,\vec W \ra$.  Let $\calS$ be the collection of all countably closed $N \prec \frak A$ of size $\omega_1$ such that $N \sqsubseteq M$ for some countably closed $M \prec \frak A$ with $\ot(M \cap \omega_4) = \omega_2$.  By taking countably closed elementary initial segments of witnesses to Chang's Conjecture, we see that $\calS$ is stationary.  Let $\calT$ be the collection of all $W_\alpha \prec \frak A$ with $\cf(\alpha) \geq \omega_2$.  Each $W \in \calT$ is closed under $\omega_1$-sequences.  For any $M \in \calS$ and $W \in \calT$, $M \cap W$ is in $\calS$, since $W \cap M \sqsubseteq M$, and it is a countably closed elementary substructure of $\frak A$.  Thus $\la \calS,\calT \ra$ is appropriate for $\omega_1$ and $\frak A$.

Let $\mathbb P = \mathbb P^\mathrm{dec*}_{\omega_1,\calS,\calT}$.  Then $\mathbb P$ is countably closed, preserves $\omega_2$ and $\omega_4$, and collapses $\omega_3$ so that $\omega_3^{V^{\mathbb P}} = \omega_4^V$.  Let $\dot F$ be a $\mathbb P$-name for a function from $(H_{\omega_4}^V)^{<\omega}$ to $H_{\omega_4}^V$.  Let $p_0 \in \mathbb P$ be arbitrary, and let $\dot f$ be a $\mathbb P$-name for a function from $\omega_2$ to $\omega_2$.
Let $\frak B = \la H_{\omega_5},\in,\lhd,\vec W,\dot F,p_0,\dot f \ra$, where $\lhd$ is a well-order.  Let $\frak B^*$ be the result of restricting the Skolem functions for $\frak B$ to $H_{\omega_4}$.

By $(\omega_4,\omega_3) \chang (\omega_2,\omega_1)$ and the above lemma, there are stationary-many countably closed $M \prec \frak B^*$ such that $\ot(M \cap \omega_4) = \omega_2$ and $|M \cap \omega_3| = \omega_1$.  For each $\alpha < \omega_4$, let $M_\alpha \prec \frak B^*$ be such a model with $\alpha \in M_\alpha$.  For each $\alpha<\omega_4$ of cofinality at least $\omega_2$, $M_\alpha \cap \alpha$ is bounded below $\alpha$.  Using Fodor's Lemma, there is a stationary $A \subseteq \omega_4 \cap \cof(\geq\omega_2)$ and a $\gamma<\omega_4$ such that $M_\alpha \cap \alpha \subseteq \gamma$ for all $\alpha \in A$.  By GCH, there is a stationary $B \subseteq A$ and a set $N$ such that $M_\alpha \cap W_\alpha = N$ for all $\alpha \in B$.  We may assume that $W_\alpha \prec \frak B^*$ for all $\alpha \in B$, which implies $N \prec \frak B^*$.  Note that $N \in \calS$.

Let $p_1 = \la s_1,h \ra$ be a condition below $p_0$ with $N \in s_1$, such that $p_1$ decides the value of $\dot f(N \cap \omega_2)$, say as $\xi<\omega_2$.  Let $\alpha \in B$ be such that $p_1 \in W_\alpha$.  Let $N^* \sqsubseteq M_\alpha$ be such that $N^* \prec M_\alpha$, $N^*$ is countably closed, and $\ot(N^* \cap \omega_4) > \xi$.  Let $s_2 = s_1 \cup \{ W_\alpha,N^* \}$.  Then $s_2$ is closed under intersections, since $N^* \cap W_\alpha = N \in s_1$, and for any $Q \in s_1$, $W_\alpha \cap Q = Q$ and $N^* \cap Q = N^* \cap W_\alpha \cap Q = N \cap Q \in s_1$.  
Then $p_2 = \la s_2,h \ra$ is a condition below $p_1$, and it forces that $N^*$ is closed under $\dot F$ and that $\dot f(N^* \cap \omega_2)< \ot(N^* \cap \omega_2)$.  Note also that $\cf(N^* \cap \omega_2) = \omega_1$.  As $p_0,\dot f$, and $\dot F$ were arbitrary, we have:
\begin{theorem}
If ZFC+GCH is consistent with a (+2)-subcompact cardinal, then ZFC is consistent with $\wcc(\omega_2,\cof(\omega_1))$.
\end{theorem}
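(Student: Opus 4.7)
The plan is to build the model via Neeman side-conditions over a ground model of $\GCH + (\omega_4,\omega_3) \chang (\omega_2,\omega_1)$, which the Eskew--Hayut construction extracts from a $(+2)$-subcompact cardinal. Two facts from the preceding discussion will drive the argument: Adolf's lemma, yielding stationarily many countably closed $M \prec H_{\omega_4}$ with $\ot(M \cap \omega_4) = \omega_2$ and $|M \cap \omega_3| = \omega_1$; and the observation that under GCH, every $M \prec H_{\omega_4}$ containing $\omega_1$ with uncountable-cofinality intersections at all cardinals is automatically countably closed.

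First I would fix a filtration $\vec W$ of $H_{\omega_4}$, set $\frak A = \langle H_{\omega_4}, \in, \vec W \rangle$, and let $\calS$ consist of those countably closed $N \prec \frak A$ of size $\omega_1$ that $\sqsubseteq$-extend to some countably closed $M \prec \frak A$ with $\ot(M \cap \omega_4) = \omega_2$, while $\calT$ is the collection of $W_\alpha \prec \frak A$ with $\cf(\alpha) \geq \omega_2$. Appropriateness is routine, the only nontrivial point being $M \cap W \in \calS$ for $M \in \calS$ and $W \in \calT$, which holds because $M \cap W \sqsubseteq M$. Then $\mathbb P = \mathbb P^{\mathrm{dec}*}_{\omega_1,\calS,\calT}$ is $\omega_1$-closed, preserves $\omega_2$ and $\omega_4$, and by the decoration mechanism of \S\ref{sec:neeman} forces $\omega_3^{V^{\mathbb P}} = \omega_4^V$.

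To verify $\wcc(\omega_2,\cof(\omega_1))$ in $V^{\mathbb P}$, given names $\dot f : \omega_2 \to \omega_2$ and $\dot F : H_{\omega_4}^{<\omega} \to H_{\omega_4}$ and an arbitrary $p_0$, I would absorb everything into a Skolem structure $\frak B^*$ on $H_{\omega_4}$ and, for each $\alpha<\omega_4$, apply Adolf's lemma to produce a countably closed $M_\alpha \prec \frak B^*$ with $\alpha \in M_\alpha$, $\ot(M_\alpha \cap \omega_4) = \omega_2$, and $|M_\alpha \cap \omega_3| = \omega_1$. The heart of the argument is a double pigeonhole: for $\alpha$ of cofinality $\geq \omega_2$ the set $M_\alpha \cap \alpha$ is bounded below $\alpha$; Fodor regresses this to a fixed $\gamma$, and GCH then extracts a stationary $B$ on which $M_\alpha \cap W_\alpha$ equals a fixed $N \in \calS$ with $W_\alpha \prec \frak B^*$.

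The main obstacle is the final combinatorial step. After taking $p_1 \leq p_0$ with $N \in s_1$ deciding $\dot f(N \cap \omega_2) = \xi$ and choosing $\alpha \in B$ with $p_1 \in W_\alpha$, I would pick a countably closed $N^* \sqsubseteq M_\alpha$ end-extending $N$ with $\ot(N^* \cap \omega_4) > \xi$ and $\cf(N^* \cap \omega_2) = \omega_1$, and form $s_2 = s_1 \cup \{W_\alpha, N^*\}$. The delicate point is intersection-closure of $s_2$: we need $N^* \cap W_\alpha = N$ and $N^* \cap Q = N \cap Q \in s_1$ for every $Q \in s_1$, both of which use precisely the end-extension $N \sqsubseteq N^*$ together with $Q \in W_\alpha$. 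Once this is verified, $p_2 = \langle s_2, h \rangle$ is a master condition for $N^*$ forcing closure under $\dot F$ together with $\dot f(N^* \cap \omega_2) < \ot(N^* \cap \omega_2)$, and the arbitrariness of $p_0, \dot F, \dot f$ delivers $\wcc(\omega_2,\cof(\omega_1))$.
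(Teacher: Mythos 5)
Your proposal follows the paper's own argument essentially verbatim: same choice of $\calS$ and $\calT$ from Adolf's lemma and the GCH countable-closure observation, same Fodor-plus-GCH pigeonhole to freeze a fixed $N$ that can be end-extended inside an $M_\alpha$, and the same intersection-closure check for $s_2 = s_1 \cup \{W_\alpha, N^*\}$ producing a master condition. The only cosmetic difference is that you phrase the crucial intersection facts via $N \sqsubseteq N^*$ while the paper works directly from $N, N^* \sqsubseteq M_\alpha$, and you impose $\cf(N^* \cap \omega_2) = \omega_1$ as a requirement on the choice of $N^*$ whereas it follows automatically from countable closure and $|N^*| = \omega_1$; neither changes the substance.
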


\section{Magidor models and ambitious cardinals}
\label{sec:ambitious}

Suppose $\kappa$ is a regular cardinal.  Following \cite{mv}, we call a set $M$ \emph{$\kappa$-Magidor} if $\kappa \in M$, $M \cap \kappa \in \kappa$, $M$ is extensional, $M$ has size ${<}\kappa$, and the transitive collapse of $M$ is equal to $V_\alpha$ for some $\alpha<\kappa$.  Magidor \cite{magidor} showed that $\kappa$ is supercompact if and only if for every $\theta > \kappa$, the set of $\kappa$-Magidor $M \in \p_\kappa(V_\theta)$ is stationary.

\begin{lemma}
\label{magclosure}
Suppose $\mu<\kappa$ is a regular cardinal, $M \prec V_\theta$ is a $\kappa$-Magidor model, $\mu \leq M \cap \kappa$, and $\cf(\sup(M \cap \theta)) \geq \mu$.  Then $M$ is ${<}\mu$-closed.
\end{lemma}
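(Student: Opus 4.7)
The plan is to exploit the Magidor property, via the transitive collapse of $M$, together with the cofinality hypothesis on $\sup(M\cap\theta)$. Let $\pi\colon M \to \bar M = V_\alpha$ be the Mostowski collapse, with $\alpha < \kappa$ witnessing Magidor-ness, and set $\beta = M\cap\kappa$. A routine verification shows that $\pi$ is the identity on $\beta$, that $\pi(\kappa) = \beta$, and that $\pi\rest(M\cap\theta)$ is an order-isomorphism onto $V_\alpha\cap\Ord = \alpha$, so $\cf(\alpha) = \cf(\sup(M\cap\theta))\geq\mu$. Fix any $X\in[M]^{<\mu}$; I will show $X\in M$. Writing $\eta = \sup(M\cap\theta)$, the regularity of $\mu$ together with $|X|<\mu\leq\cf(\eta)$ bounds $\{\rank(x) : x\in X\}$ below $\eta$, so I can pick some $\alpha'\in M\cap\theta$ with $X\subseteq V_{\alpha'}$.

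Next I would transfer $X$ into $V_\alpha$ through $\pi$. Elementarity gives $\pi(V_{\alpha'}) = V_{\pi(\alpha')}$, so $\pi[X]\subseteq V_{\pi(\alpha')}$, and because $\pi(\alpha')<\alpha$ the set $\pi[X]$ has rank below $\alpha$, placing it in $V_\alpha$. Since $\pi$ is a bijection between $M$ and $V_\alpha$, there is a unique $Y\in M$ with $\pi(Y) = \pi[X]$; from the Mostowski formula $\pi(Y) = \pi[Y\cap M]$ and the injectivity of $\pi$ on $M$, we get $Y\cap M = X$, so it remains only to prove $Y\subseteq M$. Let $\delta = |\pi[X]|$; this is an ordinal below $\mu\leq\beta$, so it lies in $M$ and is fixed by $\pi$. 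An injection from $\pi[X]$ into $\delta$ sits as a set of rank below $\alpha$ inside $V_\alpha$, giving $V_\alpha\models|\pi(Y)|\leq\delta$ and hence $V_\theta\models|Y|\leq\delta$. By elementarity I can choose a bijection $f\colon|Y|\to Y$ in $M$; each $\xi<|Y|\leq\delta<\beta$ lies in $M$, so every $f(\xi)\in M$, forcing $Y = \ran(f)\subseteq M$. Combining with $Y\cap M = X$ yields $X = Y\in M$, completing the argument.

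The main obstacle is the verification in the middle step that $\pi[X]$ is an actual element of $V_\alpha$: this is precisely the place where the cofinality hypothesis $\cf(\sup(M\cap\theta))\geq\mu$ is used, since without it the ranks of elements of $X$ might be unbounded in $\eta$ and no suitable $\alpha'\in M$ would be available. The concluding move from $Y\in M$ to $Y\subseteq M$ is the distinctive payoff of the Magidor hypothesis, leveraging that $\bar M$ is literally a level of the cumulative hierarchy rather than an arbitrary transitive set; any weakening of Magidor-ness to mere transitivity of $\bar M$ would leave $Y$ with potentially uncontrolled external elements and break this last step.
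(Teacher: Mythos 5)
Your proof is correct and follows essentially the same route as the paper: pass to the transitive collapse $\pi\colon M\to V_\alpha$, use $\cf(\sup(M\cap\theta))\geq\mu$ to bound the ranks of elements of $X$ so that $\pi[X]$ is an element of $V_\alpha$, and use $|X|<M\cap\kappa=\crit(\pi^{-1})$ to pull it back into $M$. The only difference is cosmetic: where the paper simply cites the standard fact that an elementary embedding $i$ satisfies $i(y)=i[y]$ whenever $|y|<\crit(i)$, you re-derive it by hand via the bijection $f\colon|Y|\to Y$ with domain below the critical point.
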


\begin{proof}
Let $i : V_\eta \to M$ be the inverse of the transitive collapse of $M$. Suppose $x \in [M]^{<\mu}$.  Since $\cf(M \cap \theta)\geq\mu$, there is $\alpha < \eta$ such that $x \subseteq i(V_\alpha)$.  Then $y = i^{-1}[x] \in V_\eta$, and $x = i(y) \in M$ since $|y|<M\cap\kappa=\crit(i)$.
\end{proof}

In this section, we identify a species of supercompactness that has a characterization in terms of Magidor models that is somewhat analogous to how countable models behave with end-extending cardinals.

\begin{definition*}
A cardinal $\kappa$ is \emph{ambitious} when for all $\lambda\geq\kappa$, there is $\delta_0 \geq \lambda$ such that for all $\delta \geq \delta_0$, there is a $\lambda$-closed transitive $M$, a $\delta$-closed transitive $N$, and elementary embeddings $j : V \to M$ and $k : M \to N$
such that:
\begin{itemize}
\item $\crit(j) = \kappa$ and $\lambda < j(\kappa)$.
\item $\crit(k)>\lambda$, $k[j(\kappa)] \subseteq \delta$, and $k(j(\kappa)) > \delta$.
\end{itemize}
\end{definition*}

Recall that $\kappa$ is \emph{almost-huge} if there is an elementary $j : V \to M$ with critical point $\kappa$ and $M$ is a transitive class closed under ${<}j(\kappa)$-sequences.
The name \emph{ambitious} is chosen because it is as if such $\kappa$ are trying hard to be almost-huge.  The first embedding $j$ sends $\kappa$ to a highly closed model, but $j(\kappa)$ overshoots the closure.  The next embedding $k$ tries to make up for this by sending $j(\kappa)$ into a model whose closure is above $k(\alpha)$ for each $\alpha$ at which $j$ may have fallen short of being an almost-huge embedding.  Now $k(j(\kappa))$ may overshoot the new closure, but it looks like progress.  Indeed, this is a local property of systems of measures that characterize almost-huge cardinals (see \cite[Theorem 24.11]{kanamori}).

\begin{prop}
The following are equivalent:
\begin{enumerate}
\item\label{amb} $\kappa$ is ambitious.

\item\label{amb-uf} (High-jump property) For all $\lambda \geq \kappa$, there is $\delta_0 \geq \lambda$ such that for all $\delta \geq \delta_0$, there is a normal ultrafilter $\calU$ on $\p_\kappa(\delta)$ such that for all $f : \p_\kappa(\lambda) \to \kappa$, $\{ z \in \p_\kappa(\delta) : f(z \cap \lambda) < \ot(z) \} \in \calU$.

\item\label{amb-uf-ptwise} (Shelah property) For all $\lambda \geq \kappa$ and all $f : \p_\kappa(\lambda) \to \kappa$, there is $\delta_0 \geq \lambda$ such that for all $\delta \geq \delta_0$, there is a normal ultrafilter $\calU$ on $\p_\kappa(\delta)$ such that $\{ z \in \p_\kappa(\delta) : f(z \cap \lambda) < \ot(z) \} \in \calU$.

\end{enumerate}
\end{prop}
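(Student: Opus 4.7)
The plan is to establish $(1) \Leftrightarrow (2)$ via the standard correspondence between supercompactness embeddings and normal fine ultrafilters, then $(2) \Leftrightarrow (3)$, with $(2) \Rightarrow (3)$ immediate.

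For $(1) \Rightarrow (2)$: given witnesses $j : V \to M$ and $k : M \to N$ to ambitiousness at $\lambda, \delta$, I would form $i = k \circ j$ and take $\calU = \{ A \subseteq \p_\kappa(\delta) : i[\delta] \in i(A) \}$. This is a normal fine ultrafilter on $\p_\kappa(\delta)$ since $N$ is $\delta$-closed (giving $i[\delta] \in N$) and $i(\kappa) = k(j(\kappa)) > \delta$. To verify the high-jump property for each $f : \p_\kappa(\lambda) \to \kappa$, I need $i(f)(i[\delta] \cap i(\lambda)) < \ot(i[\delta]) = \delta$, which follows from: (a) $i[\delta] \cap i(\lambda) = i[\lambda]$ by order-preservation; (b) $i[\lambda] = k(j[\lambda])$, since $j[\lambda] \in M$ by $\lambda$-closure and $|j[\lambda]|^M = \lambda < \crit(k)$, so the standard lemma identifying $k(X)$ with $k[X]$ on small sets applies; and (c) $k(j(f)(j[\lambda])) < \delta$, because $j(f)(j[\lambda]) < j(\kappa)$ and $k[j(\kappa)] \subseteq \delta$.

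For $(2) \Rightarrow (1)$: given $\calU$ as in (2), I would let $\calU_\lambda$ be the projection of $\calU$ under $\pi(z) = z \cap \lambda$, then form $j = j_{\calU_\lambda} : V \to M$, $i = j_\calU : V \to N$, and the factor map $k : M \to N$ defined by $k([g]_{\calU_\lambda}) = [g \circ \pi]_\calU$. Standard facts about normal fine ultrafilters on $\p_\kappa(\lambda)$ and $\p_\kappa(\delta)$ give $\crit(j) = \kappa$, $M$ is $\lambda$-closed, $j(\kappa) > \lambda$, $N$ is $\delta$-closed, $\crit(k) > \lambda$, and $k(j(\kappa)) = i(\kappa) > \delta$. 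The high-jump condition in (2) translates exactly to the remaining requirement $k[j(\kappa)] \subseteq \delta$: every $\alpha < j(\kappa)$ equals $[g]_{\calU_\lambda}$ for some $g : \p_\kappa(\lambda) \to \kappa$, and (2) forces $k(\alpha) = [g \circ \pi]_\calU < [\ot]_\calU = \delta$.

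The implication $(2) \Rightarrow (3)$ is immediate. For $(3) \Rightarrow (2)$, the hard part is that (3) supplies a possibly different ultrafilter for each $f$, whereas (2) demands a single uniform ultrafilter; the plan to consolidate them is a coding trick. Fix $\lambda$, set $\chi = 2^{\lambda^{<\kappa}}$, fix a bijection $b : \chi \to {}^{\p_\kappa(\lambda)}\kappa$, and define the single dominating function $f^* : \p_\kappa(\chi) \to \kappa$ by
\[
f^*(w) = \sup\{ b(\alpha)(w \cap \lambda) : \alpha \in w \},
\]
which stays below $\kappa$ by regularity of $\kappa$. Applying (3) to $\chi$ and $f^*$ supplies a threshold $\delta_0$, and for any $\delta \geq \delta_0$ the resulting $\calU$ will in fact witness (2) uniformly: given any $f : \p_\kappa(\lambda) \to \kappa$, pick $\alpha < \chi$ with $b(\alpha) = f$; fineness yields $\{ z : \alpha \in z \} \in \calU$, and on this set $f(z \cap \lambda) \leq f^*(z \cap \chi) < \ot(z)$.
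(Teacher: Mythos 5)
Your proof is correct and follows essentially the same route as the paper's: for $(1) \Leftrightarrow (2)$, both derive the ultrafilter from $i = k \circ j$ and verify the high-jump property via $k[j(\kappa)] \subseteq \delta$ (resp.\ recover $j,k$ as the ultrapower and factor maps through the projection $z \mapsto z \cap \lambda$), and for $(3) \Rightarrow (2)$, both use the same coding/supremum trick over $2^{\lambda^{<\kappa}}$ with fineness doing the work. Your direct computation $i(f)(i[\lambda]) = k\bigl(j(f)(j[\lambda])\bigr) < \delta$ is a modest streamlining of the paper's $(1) \Rightarrow (2)$, which detours through the factor maps $\bar k$, $\ell_\calU$, $\ell_\calW$ and a commutative diagram to reach the same bound.
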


\begin{proof}
(\ref{amb}) $\Rightarrow$ (\ref{amb-uf}): Let $\lambda\geq\kappa$, let $\delta_0 \geq \lambda$ be given by (\ref{amb}), let $\delta\geq\delta_0$, and let $j : V \to M$ and $k : M \to N$ 
also be given by (\ref{amb}).  Let $i = k \circ j$.  Let $\calU = \{ X \subseteq \p_\kappa(\lambda) : j[\lambda] \in j(X) \}$, and let $\calW =  \{ X \subseteq \p_\kappa(\delta) : i[\delta] \in i(X) \}$.  Let $\pi : \p_\kappa(\delta) \to \p_\kappa(\lambda)$ be $z \mapsto z \cap \lambda$.  Then:
\begin{align*}
X \in \calU &\Leftrightarrow j[\lambda] \in j(X) \\
		&\Leftrightarrow k(j[\lambda]) = i[\lambda] \in i(X) \text{\hspace{5mm}(since $\crit(k) > \lambda)$} \\
		&\Leftrightarrow i(\pi)(i[\delta]) \in i(X) \\
		&\Leftrightarrow \pi^{-1}[X] \in \calW
\end{align*}
Let $\bar k : \Ult(V,\calU) \to \Ult(V,\calW)$ be the map $[f]_\calU \mapsto [f \circ \pi]_\calW$.
Let $\ell_\calU : \Ult(V,\calU) \to M$ be $[f]_{\calU} \mapsto j(f)(j[\lambda])$, and let $\ell_\calW : \Ult(V,\calW) \to N$ be $[f]_{\calW} \mapsto i(f)(i[\delta])$.  
With $j_\calU,j_\calW$ denoting the usual ultrapower embeddings, we have the following commutative diagram:
\begin{diagram}
V			&			&\rTo^{j}				&&M					&\rTo^{k}		&N			\\
			&\rdTo^{j_\calU} \rdTo(4,2)^{j_\calW}		&&\ruTo^{\ell_\calU}	&			&\ruTo^{\ell_\calW}	\\
			&				&\Ult(V,\calU)		&\rTo^{\bar k}	&\Ult(V,\calW)			
\end{diagram}
Since $j_\calW[\delta]$ is represented by $[\id]_\calW$, the claim that for all $f : \p_\kappa(\lambda) \to \kappa$, $\{ z \in \p_\kappa(\delta) : f(z \cap \lambda) < \ot(z) \} \in \calW$ is equivalent to the claim that $\sup\bar k[j_\calU(\kappa)] \leq \delta$.
To show this, note:
\begin{align*}
\delta	&\geq		\sup\{ k(\alpha) : \alpha < j(\kappa) \} \\
		&\geq	\sup\{ k(\ell_\calU(\alpha)) : \alpha < j_\calU(\kappa) \} \\
		&=		\sup\{ \ell_\calW(\bar k(\alpha)) : \alpha < j_\calU(\kappa) \} \\
		&\geq	\sup\{ \bar k(\alpha) : \alpha < j_\calU(\kappa) \}
\end{align*}

(\ref{amb-uf}) $\Rightarrow$ (\ref{amb}): Let $\lambda \geq \kappa$, let $\delta_0\geq\lambda$ be given by (\ref{amb-uf}), let $\delta\geq\delta_0$ and let $\calW$ be a normal ultrafilter on $\p_\kappa(\delta)$ given by (\ref{amb-uf}).  Let $\pi : \p_\kappa(\delta) \to \p_\kappa(\lambda)$ be $z \mapsto z \cap \lambda$, and let $\calU$ be the projection of $\calW$ to $\p_\kappa(\lambda)$ via $\pi$.  Let $j : V \to M = \Ult(V,\calU)$ and $i : V \to N = \Ult(V,\calW)$ be the ultrapower embeddings, and let $k : M \to N$ be $[f]_\calU \mapsto [f \circ \pi]_\calW$.  Ordinals $\alpha\leq\lambda$ are represented in $\Ult(V,\calU)$ by the function $f_\alpha : z \mapsto \ot(z \cap \alpha)$, and $k(\alpha)$ is the $\calW$-equivalence class of the function $z \mapsto \ot(z \cap \lambda \cap \alpha)$, which represents $\alpha$ in $\Ult(V,\calW)$.  Thus $\crit(k) > \lambda$.  
For all $\alpha < j(\kappa)$, there is a function $f : \p_\kappa(\lambda) \to \kappa$ representing $\alpha$.  Since $\{ z \in \p_\kappa(\delta) : f\circ\pi(z) < \ot(z) \} \in \calW$, $k(\alpha)<\delta$.

The implication (\ref{amb-uf}) $\Rightarrow$ (\ref{amb-uf-ptwise}) is trivial.  For the reverse direction, let $\lambda \geq \kappa$.  Let $\lambda' = 2^{\lambda^{<\kappa}}$, and let $\delta_0 \geq \lambda'$ witness (\ref{amb-uf-ptwise}) for $\lambda'$.  Enumerate all functions from $\p_\kappa(\lambda)$ to $\kappa$ as $\{ f_\alpha : \alpha < \lambda' \}$.  Let $g : \p_\kappa(\lambda') \to \kappa$ be $z \mapsto \sup\{ f_\alpha(z \cap \lambda) : \alpha \in z \}$.  Let $\delta \geq \delta_0$ and let $\calU$ be a normal ultrafilter on $\p_\kappa(\delta)$ such that $\{ z \in \p_\kappa(\delta) : g(z \cap \lambda') < \ot(z) \} \in \calU$.  By the fineness of $\calU$, for all $\alpha<\lambda'$, $\{ z \in \p_\kappa(\delta) : f_\alpha(z \cap \lambda) < \ot(z) \} \in \calU$.
\end{proof}

\begin{theorem}
\label{amb-ee}
Suppose $\kappa<\theta$ are inaccessible.  The following are equivalent:
\begin{enumerate}
\item\label{local-amb} $V_\theta \models \kappa$ is ambitious.
\item\label{mag-ee} For all structures $\frak A$ on $V_\theta$ in a countable language, there is a $\kappa$-Magidor $M \prec \frak A$ such that for all $\alpha<\kappa$, there is a $\kappa$-Magidor $N \prec \frak A$ such that $M \sqsubseteq N$ and $\ot(N \cap \theta) > \alpha$.
\end{enumerate}
Furthermore, in (\ref{mag-ee}), we can take $M$ and $N$ to be ${<}(M\cap\kappa)$-closed.  
\end{theorem}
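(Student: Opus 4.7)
The plan is to prove (1) $\Leftrightarrow$ (2) in two directions and then handle the closure refinement. The easier direction is (1) $\Rightarrow$ (2), which I attack by contrapositive using the Shelah-property characterization already proved.

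For (1) $\Rightarrow$ (2), suppose $\frak A$ on $V_\theta$ is a counterexample: for each $\kappa$-Magidor $M \prec \frak A$, fix $\alpha_M<\kappa$ witnessing that no Magidor $N \sqsupseteq M$ elementary in $\frak A$ has $\ot(N \cap \theta) > \alpha_M$. Pick an inaccessible $\lambda<\theta$ with $V_\lambda \prec \frak A$, and extend $M \mapsto \alpha_M$ to a function $\alpha \colon \p_\kappa(V_\lambda) \to \kappa$ (taking $\alpha(z)=0$ when $z$ is not a Magidor $\prec V_\lambda \prec \frak A$). By (1) and the preceding proposition, there is an inaccessible $\delta \in (\lambda,\theta)$ with $V_\delta \prec \frak A$ and a normal fine ultrafilter $\calU^{*}$ on $\p_\kappa(V_\delta)$ in $V_\theta$---obtained from a Shelah-ultrafilter on $\p_\kappa(\delta)$ via a bijection $V_\delta \leftrightarrow \delta$ in $V_\theta$---such that $\calU^{*}$-a.e.\ $z$ satisfies $\alpha(z \cap V_\lambda) < \ot(z \cap \theta)$, and such $z$ is a $\kappa$-Magidor elementary in $V_\delta \prec \frak A$ (the latter from supercompactness implied by ambition). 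For any such $z$, set $M := z \cap V_\lambda$; since $V_{\rank M} \subseteq V_\lambda$, we get $z \cap V_{\rank M} = M$, i.e.\ $M \sqsubseteq z$. Then $z$ is a Magidor end-extension of $M$ in $\frak A$ with $\ot(z \cap \theta) > \alpha_M$, contradicting the choice of $\alpha_M$.

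For (2) $\Rightarrow$ (1), I verify the Shelah property in $V_\theta$. Given $\lambda<\theta$ and $f \colon \p_\kappa(\lambda) \to \kappa$ in $V_\theta$, apply (2) to a structure $\frak A$ on $V_\theta$ encoding $\lambda$, $f$, and an inaccessible $\delta<\theta$, with Skolem functions forcing $\lambda, f, V_\delta \in M$ for any $M \prec \frak A$. This yields a Magidor $M \prec \frak A$ with end-extensions $N_\alpha \sqsupseteq M$ of arbitrarily large $\ot(N_\alpha \cap \theta)$. Since $\lambda \in M$, the relation $M \sqsubseteq N_\alpha$ forces $N_\alpha \cap \lambda = M \cap \lambda$, hence $f(N_\alpha \cap \lambda) = f(M \cap \lambda)$; for $\alpha > f(M \cap \lambda)$ this gives $f(N_\alpha \cap \lambda) < \ot(N_\alpha \cap \theta)$. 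Varying the Skolem functions in $\frak A$ and restricting via $N \mapsto N \cap V_\delta$ (which preserves Magidor-ness and $\sqsubseteq$ when $V_\delta \in N$) shows the set $S = \{z \in \p_\kappa(V_\delta) : z \text{ Magidor} \prec V_\delta,\ f(z \cap \lambda) < \ot(z \cap \theta)\}$ is stationary. Using that (2) implies $V_\theta \models \kappa$ supercompact (by Magidor's characterization), together with the coherent family of inverse collapses $\pi_{N_\alpha}$ all extending $\pi_M$---which provides the extra ``stretching'' beyond bare supercompactness---I extract a normal fine ultrafilter $\calU^{*}$ on $\p_\kappa(V_\delta)$ concentrating on $S$, and project to $\p_\kappa(\delta)$ to recover the Shelah property.

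The main obstacle is the ultrafilter extraction in (2) $\Rightarrow$ (1): stationary subsets of Magidor models are not in general measure-one in any normal ultrafilter, so one must crucially exploit the end-extension structure via the family $\{\pi_{N_\alpha}\}_\alpha$, treating them as a coherent system of partial embeddings whose seeds generate the ultrafilter with Shelah property. For the furthermore clause, Lemma \ref{magclosure} reduces $<(M \cap \kappa)$-closure to the cofinality condition $\cf(\sup(M \cap \theta)) \geq M \cap \kappa$. In the (1) $\Rightarrow$ (2) argument, choose $\lambda$ and $\delta$ both with $V_\theta$-cofinality at least $\kappa$; then \L{}o\'s applied in the ultrapower of $\calU^{*}$ (which is $\delta$-closed by ambition) shows $\calU^{*}$-a.e.\ $z$ has $\cf(\sup(z \cap \theta)) \geq z \cap \kappa$, a property inherited by both $M = z \cap V_\lambda$ and $N = z$ via their respective sups.
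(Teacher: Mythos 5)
Your proposal splits cleanly by direction, and the two halves have very different status.

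\paragraph{Direction (1) $\Rightarrow$ (2).}
Your contrapositive argument is essentially the paper's. You fix the failing structure, package the bound as $b \colon \p_\kappa(V_\lambda) \to \kappa$, pull in a Shelah ultrafilter on $\p_\kappa(V_\delta)$, and observe that a.e.\ $z$ end-extends $z \cap V_\lambda$ with large order type. The paper phrases this as a factor-map computation: it derives $\calU$ on $\p_\kappa(V_\lambda)$ from $\calW$, shows $j[V_\lambda]$ is Magidor in $\Ult(V,\calU)$, sets $\xi = j(b)(j[V_\lambda])$, and then uses $i[V_\delta]$ as the contradicting end-extension with $\delta > k(\xi)$. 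Your ``a.e.\ $z$'' version is morally the same. A small gap you should fill: you need a.e.\ $z \cap V_\lambda$ to be $\kappa$-Magidor and elementary in $\frak A$, which follows from $V_\lambda \in z$ (by fineness and $V_\lambda \in V_\delta$) plus elementarity, but you don't say it; the ultrapower phrasing handles it automatically via \L{}o\'s.

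\paragraph{Direction (2) $\Rightarrow$ (1).}
This is where your argument has a genuine gap, and you flagged it yourself: you cannot ``extract'' a normal ultrafilter concentrating on a stationary set of Magidor models, and nothing in the coherent family $\{\pi_{N_\alpha}\}$ gets you past that. Stationarity of Magidor models gives supercompactness but does not select a specific ultrafilter, and the ``stretching'' you want is not stated in a form that produces one. You also only argue ``there exists $\delta$'' per $(\lambda,f)$ and never address the required ``for all $\delta \geq \delta_0$.'' The paper's key move, which your proposal never arrives at, is to work \emph{inside the transitive collapses of the Magidor models}. If $M \cong V_\gamma$ and $N \cong V_\eta$ with $\gamma < \eta < \kappa$ and inverse-collapse maps $j \colon V_\gamma \to V_\theta$, $i \colon V_\eta \to V_\theta$ with $j \subseteq i$, then the derived ultrafilter $\calU$ on $\p_{\bar\kappa}(\bar\lambda)$ lives in $V_\gamma$, the factor-map computation of $\xi = \sup\ell_0[j_\calU(\bar\kappa)] < \kappa$ happens in $V$, and for any $\bar\delta$ with $\xi \leq \bar\delta < \eta$, the $i$-derived ultrafilter $\calW$ on $\p_{\bar\kappa}(\bar\delta)$ provably has the Shelah property \emph{inside $V_\eta$}. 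One then applies the elementarity of $i$ to transport this into $V_\theta$, obtaining the Shelah property there for all $\delta \geq i(\xi)$. This is not an ultrafilter-extraction argument at all; it is a reflection argument using the Magidor model's internal first-order theory, and it both closes your gap and delivers the ``for all $\delta \geq \delta_0$'' clause that your sketch omits.

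\paragraph{Furthermore clause.}
Your treatment via Lemma~\ref{magclosure} and cofinality constraints on $\lambda,\delta$ is correct and matches the paper.
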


\begin{proof}
Suppose (\ref{local-amb}).  If (\ref{mag-ee}) fails, then there is a structure $\frak A$ on $V_\theta$ in a countable language extending $(V_\theta,\in,\kappa)$ such that for every $\kappa$-Magidor $M \prec \frak A$, there is a bound $b_{\frak A}(M) <\kappa$ on the order-type of $N \cap \theta$ whenever $N \prec \frak A$ is a $\kappa$-Magidor end-extension of $M$.  

Let $\lambda<\theta$ be such that $V_\lambda \prec \frak A$ and $\cf(\lambda) \geq \kappa$.
Let $\delta_0 \geq \lambda$ witness ambitiousness.  Let $\delta \geq \delta_0$ be such that  $V_\delta \prec \frak A$ and $\cf(\delta) \geq \kappa$.
Let $\calW$ be a normal ultrafilter on $\p_\kappa(V_\delta)$ such that for all $f : \p_\kappa(V_\lambda) \to \kappa$, $\{ z \in \p_\kappa(V_\delta) : f(z\cap V_\lambda) < \ot(z) \} \in \calW$.  Let $\calU$ be the projection of $\calW$ to $\p_\kappa(V_\lambda)$, and let $j : V \to M$ be the ultrapower embedding via $\calU$.  Then in $M$, $j[V_\lambda]$ is a $j(\kappa)$-Magidor elementary submodel of $j(V_\lambda) \prec j(\frak A)$.  
It is ${<}\kappa$-closed by Lemma \ref{magclosure}.  Let $b = b_{\frak A} \restriction \p_\kappa(V_\lambda)$, and let $\xi = j(b)(j[V_\lambda])$.
Let $i : V \to N$ be the ultrapower map via $\calW$, and let $k : M \to N$ be the factor map.  Then $\delta > k(\xi)$.
In $N$, $k(\xi) = i(b_{\frak A})(i[V_\lambda])$ is a bound on the order-types of $i(\kappa)$-Magidor end-extensions of $i[V_\lambda]$ that are elementary in $i(\frak A)$.  But $i[V_\delta]$ is an $i(\kappa)$-Magidor elementary submodel of $i(\frak A)$, $i[V_\delta] \sqsupseteq i[V_\lambda]$, and $\ot(i[V_\delta] \cap \theta) = \delta > k(\xi)$.  This contradiction shows that (\ref{mag-ee}) holds.

Now suppose (\ref{mag-ee}).  Suppose $\kappa \leq \lambda <\theta$, and let $\frak A$ be any structure in a countable language expanding $(V_\theta,\in,\kappa,\lambda)$.  Let $M \prec \frak A$ be a $\kappa$-Magidor model that can be end-extended to other $\kappa$-Magidor models elementary in $\frak A$ of arbitrarily high order-type below $\kappa$.  There is $\gamma < \kappa$, a structure $\bar{\frak A}$ on $V_\gamma$, and an elementary $j : \bar{\frak A} \to \frak A$, with $j[V_\gamma] = M$.  Let $\la\bar\kappa,\bar\lambda\ra = j^{-1}(\la\kappa,\lambda\ra)$.  Then we can define a normal ultrafilter $\calU$ on $\p_{\bar\kappa}(\bar\lambda)$ by $X \in \calU$ iff $j[\bar\lambda] \in j(X)$.  Of course, $\calU \in V_\gamma$.  Let $j_\calU : V_\gamma \to P = \Ult(V_\gamma,\calU)$ be the ultrapower embedding.  As usual, there is a factor map $\ell_0 : P \to V_\theta$ defined by $\ell_0([f]_{\calU}) = j(f)(j[\bar\lambda])$, with $j = \ell_0 \circ j_\calU$.  Let $\xi = \sup \ell_0[j_\calU(\bar\kappa)]$, which is above $\bar\lambda$ since $j_\calU(\bar\kappa) > \bar\lambda$.  Since $\gamma<\kappa$ and $\kappa$ is regular, $\xi <\kappa$.

Now let $N \prec \frak A$ be a $\kappa$-Magidor end-extension of $M$ with $\ot(N \cap \theta) > \xi$.  Let $\eta<\kappa$ be such that $N \cong V_\eta$.  Let $i : V_\eta \to V_\theta$ be the inverse of the transitive collapse map.  Then $i$ is an extension of $j$.  Let $\xi\leq\bar\delta<\eta$.  Let $\calW$ be the normal ultrafilter on $\p_{\bar\kappa}(\bar\delta)$ defined by $X \in \calW$ iff $i[\bar\delta] \in i(X)$.  Then $\calU$ is the projection of $\calW$ via the map $\pi : z \mapsto z \cap \bar\lambda$.  Let $j_{\calW} : V_\eta \to Q = \Ult(V_\eta,\calW)$ be the ultrapower embedding, let $k : P \to Q$ be the factor map $[f]_\calU \mapsto [f \circ \pi]_\calW$, and let $\ell_1 : Q \to V_\theta$ be the map $[f]_\calW \mapsto i(f)(i[\bar\delta])$.  We have the following commutative diagram:
\begin{diagram}
V_\gamma	&\rTo^{j_\calU}	&P		&\rTo^{\ell_0}	&V_\theta	\\
\dTo^\id		&			&\dTo^k	&\ruTo^{\ell_1}	&		\\
V_\eta		&\rTo^{j_\calW}	&Q		&			&			
\end{diagram}
We have that:
\begin{align*}
\bar\delta	&\geq \sup\{ \ell_0(\alpha) : \alpha < j_\calU(\bar\kappa) \} \\
		&=  \sup\{ \ell_1(k(\alpha)) : \alpha < j_\calU(\bar\kappa) \} \\
		&\geq \sup\{ k(\alpha) : \alpha < j_\calU(\bar\kappa) \}
\end{align*}
The relation $\bar\delta \geq \sup k[ j_\calU(\bar\kappa) ]$ is equivalent to the statement that for all $f : \p_{\bar\kappa}(\bar\lambda) \to \bar\kappa$, $\{ z \in \p_{\bar\kappa}(\bar\delta) : f(z \cap \bar\lambda) < \ot(z) \} \in \calW$, which can be computed in $V_\eta$.  Note that to arrange this property, we only needed to take $\bar\delta \geq \xi$.  By the elementarity of $i = \ell_1 \circ j_\calW$, 
for every $\delta \geq i(\xi)$ in $V_\theta$, there is a normal ultrafilter $\calW'$ on $\p_\kappa(\delta)$ 
such that for all $f : \p_{\kappa}(\lambda) \to \kappa$, $\{ z \in \p_{\kappa}(\delta) : f(z \cap \lambda) < \ot(z) \} \in \calW'$.
\end{proof}

In contrast to the class of countable models, Magidor models are more constrained in their end-extendibility.  

\begin{prop}
\label{maxmag}
Suppose $\kappa<\theta$ are inaccessible and $V_\theta \models \kappa$ is supercompact.  Then there is a set $B \subseteq \kappa$ and a function $f : B \to \kappa$ such that:
\begin{enumerate}
\item $M \cap \kappa \in B$ for stationary many $\kappa$-Magidor $M \prec V_\theta$.
\item For all $\kappa$-Magidor $M \prec V_\theta$ such that $M \cap \kappa \in B$, $\ot(M \cap \theta) \leq f(M \cap \kappa)$.
\end{enumerate}
\end{prop}

\begin{proof}
Let $B = \{ \alpha < \kappa : V_\kappa \models$ ``$\alpha$ is not supercompact''$\}$.  For $\alpha \in B$, let $f(\alpha)$ be the least $\beta$ such that $\alpha$ is not $\beta$-supercompact.

Suppose $\kappa$ is $\lambda$-supercompact
Let $\calU,\calW$ be two normal ultrafilters on $\p_\kappa(\lambda)$.  If $\calU \in \Ult(V,\calW)$, then $j_\calU(\kappa) < j_\calW(\kappa)$ since $j_\calW(\kappa)$ is inaccessible in $\Ult(V,\calW)$, and $\Ult(V,\calW)$ can compute $\Ult(\kappa,\calU)$.  Thus if $\calU$ has $j_\calU(\kappa)$ as small as possible, then $\kappa$ is not $\lambda$-supercompact in $\Ult(V,\calU)$.  
Thus $\kappa \in j_\calU(B)$ and $j_\calU(f)(\kappa) \leq \lambda$.

Let $\frak A$ be a structure on $V_\theta$ in a countable language, and let $\lambda$ be such that $\kappa<\lambda< \theta$ and $V_\lambda \prec \frak A$.  Let $\calU$ be a normal ultrafilter on $\p_\kappa(\lambda)$ with $j_\calU(\kappa)$ as small as possible.  In $M = \Ult(V,\calU)$, $N = j_\calU[V_\lambda]$ is a $j_\calU(\kappa)$-Magidor elementary submodel of $j_\calU(\frak A)$.  By the above observations, $\kappa =N\cap j_\calU(\kappa) \in j_\calU(B)$.  By elementarity, there is $\kappa$-Magidor $N' \prec \frak A$ such that $N' \cap \kappa \in B$.

Now suppose $M \prec V_\theta$ is $\kappa$-Magidor and $M \cap \kappa \in B$.  Suppose towards a contradiction that there is a $\kappa$-Magidor $N \prec V_\theta$ such that $M \sqsubseteq N$ and $\ot(N) > f(M \cap \kappa)$.  Then there is an elementary $i : V_\eta \to V_\theta$, with $i[V_\eta] = N$ and $\eta > f(M\cap\kappa)$.  We can define a normal ultrafilter $\calW$ on $\p_{M\cap\kappa}(f(M\cap\kappa))$ by $X \in \calW$ iff $i[f(M\cap\kappa)] \in i(X)$.  This contradicts the fact that $M \cap \kappa$ is not $f(M\cap\kappa)$-supercompact.
\end{proof}



To situate ambitiousness among the large cardinal notions, we show that it is strictly below almost-hugeness and strictly above the strongest notion found in the literature below almost-huge.  Perlmutter's paper \cite{perlmutter} seems to be the latest word on this.  A cardinal $\kappa$ is called \emph{high-jump} when for some $\lambda \geq \kappa$, there is a normal ultrafilter $\calU$ on $\p_\kappa(\lambda)$ such that $\lambda\geq\sup\{ j_\calU(f)(\kappa) : f : \kappa \to \kappa \}$.  Such an ultrafilter is called a \emph{high-jump measure}.    This notion was first introduced in \cite{srk} and given the name ``high-jump'' by \cite{ha}.  The strongest notion considered in \cite{perlmutter} below almost-huge is \emph{high-jump with unbounded excess closure}.  Perlmutter \cite[Proposition 4.6]{perlmutter} shows that this is equiconsistent with the existence of a cardinal $\kappa$ such that for all sufficiently large $\lambda \geq \kappa$, there is a high-jump measure on $\p_\kappa(\lambda)$.

\begin{prop}
Suppose $\kappa$ is almost-huge with target $\lambda$.  Then $V_\lambda \models$ ``$\kappa$ is ambitious,'' and $V_\kappa \models$ ``There are unboundedly many ambitious cardinals.''  
\end{prop}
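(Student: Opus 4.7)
The plan is to use the Shelah-property characterization (item (\ref{amb-uf-ptwise})) of ambitiousness, exploiting the highly closed ultrapower given by almost-hugeness. Let $j\colon V\to M$ be almost huge with $\crit(j)=\kappa$ and $j(\kappa)=\lambda$, so $M$ is ${<}\lambda$-closed, $\lambda$ is inaccessible in $V$, and $V_\lambda^M=V_\lambda$. For each $\lambda'$ with $\kappa\leq\lambda'<\lambda$ and each $\delta$ with $\lambda'\leq\delta<\lambda$, I would consider the standard supercompactness measure
\[
\calW_\delta \;=\; \{X\subseteq \p_\kappa(\delta) : j[\delta]\in j(X)\}.
\]
Its rank is below $\lambda$ (since $2^{\delta^{<\kappa}}<\lambda$ by inaccessibility), so $\calW_\delta\in V_\lambda$, and fineness and normality are absolute and follow from the standard computations.

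The core calculation is that for any $f\colon\p_\kappa(\lambda')\to\kappa$ in $V_\lambda$,
\[
\{z\in\p_\kappa(\delta):f(z\cap\lambda')<\ot(z)\}\in\calW_\delta \iff j(f)(j[\lambda'])<\delta,
\]
using $j[\delta]\cap j(\lambda')=j[\lambda']$ and $\ot(j[\delta])=\delta$. Each ordinal $\alpha_f:=j(f)(j[\lambda'])$ lies below $j(\kappa)=\lambda$, and $V_\lambda$ contains fewer than $\lambda$ many such $f$ by inaccessibility; combined with the regularity of $\lambda$,
\[
\delta_0 \;=\; \sup\bigl\{\alpha_f+1 : f\in V_\lambda,\ f\colon\p_\kappa(\lambda')\to\kappa\bigr\}
\]
is strictly below $\lambda$. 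Hence for every $\delta\in[\delta_0,\lambda)$, the single ultrafilter $\calW_\delta$ witnesses the Shelah property for $\lambda'$ inside $V_\lambda$, and since $\lambda'$ was arbitrary, $V_\lambda\models\kappa$ is ambitious.

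For the second assertion, absoluteness of set-model satisfaction together with $V_\lambda^M=V_\lambda$ gives that $M$ satisfies ``$V_{j(\kappa)}\models\kappa$ is ambitious''. For every $\alpha<\kappa$ we have $j(\alpha)=\alpha$, so by elementarity of $j$, $V$ satisfies ``there is $\bar\kappa\in(\alpha,\kappa)$ with $V_\kappa\models\bar\kappa$ is ambitious'', yielding unboundedly many ambitious cardinals in $V_\kappa$. The main delicate point is the estimate $\delta_0<\lambda$, which rests on counting the functions $f\colon\p_\kappa(\lambda')\to\kappa$ in $V_\lambda$ using inaccessibility of $\lambda$; one must also check that the Shelah characterization, as read inside the model $V_\lambda$, quantifies only over $f\in V_\lambda$, which is precisely what ``$V_\lambda\models\kappa$ is ambitious'' requires.
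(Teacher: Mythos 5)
Your proposal is correct and follows essentially the same route as the paper: fix $\lambda'\in[\kappa,\lambda)$, use inaccessibility and regularity of $\lambda$ to bound $\sup_f j(f)(j[\lambda'])$ strictly below $\lambda$, and then observe that for every $\delta$ past that bound the supercompactness measure derived from $j$ on $\p_\kappa(\delta)$ witnesses the required property; the second clause is the standard reflection of the first through $j$. One small labeling slip: you announce that you will verify the Shelah-property clause, but what you actually establish (by taking a single $\delta_0$ uniform over all $f$) is the stronger high-jump clause, which is what the paper uses as well.
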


\begin{proof}
Let $j : V \to M$ be such that $\crit(j) = \kappa$, $j(\kappa) = \lambda$, and $M^{<\lambda} \subseteq M$.  Suppose $\kappa \leq \alpha < \lambda$.  
Since $\lambda$ is inaccessible, there is $\delta_0 < \lambda$ such that for all $f : \p_\kappa(\alpha) \to \kappa$, $j(f)(j[\alpha])<\delta_0$.
Let $\delta_0\leq\delta<\lambda$ and let $\calU$ be the normal ultrafilter on $\p_\kappa(\delta)$ derived from $j$.  Then for all $f : \p_\kappa(\alpha) \to \kappa$,
$\{ z \in \p_\kappa(\delta) : f(z \cap \alpha) < \ot(z) \} \in \calU.$
The second claim follows from an easy reflection argument.
\end{proof}

\begin{prop}
If $\kappa$ is ambitious, then for all sufficiently large $\lambda \geq \kappa$, there is a high-jump measure on $\p_\kappa(\lambda)$.  Furthermore, the former property has strictly greater consistency strength than the latter.
\end{prop}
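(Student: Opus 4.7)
The first statement follows directly from item~(\ref{amb-uf}) of the preceding proposition applied at parameter $\lambda=\kappa$, which produces for every $\delta$ past some threshold $\delta_0$ a single normal ultrafilter $\calU$ on $\p_\kappa(\delta)$ with $\{z : f(z\cap\kappa)<\ot(z)\}\in\calU$ for every $f:\p_\kappa(\kappa)\to\kappa$. Given any $g:\kappa\to\kappa$, extending $g$ to such an $f$ that agrees with $g$ on ordinals $<\kappa$ yields $\{z:g(z\cap\kappa)<\ot(z)\}\in\calU$ for every $g$. Since $\kappa$ is represented in $\Ult(V,\calU)$ by $z\mapsto z\cap\kappa$ and $\delta$ by $z\mapsto\ot(z)$, this is precisely the high-jump property for $\calU$.

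For the strict-strength claim, the plan is to produce, inside any $V$ containing an ambitious $\kappa$, a transitive set model of \ZFC\ together with ``there is a cardinal with high-jump measures on all sufficiently large $\p_\alpha(\mu)$''; the strict gap then follows by G\"odel's second incompleteness theorem, since any proof that the weaker theory implies $\mathrm{Con}$(ambitious) would chain with the above to give \ZFC + ``there is an ambitious cardinal'' a proof of its own consistency. The starting point is that ambitiousness implies supercompactness of $\kappa$: the composition $i=k\circ j$ from the definition has critical point $\kappa$, sends $\kappa$ past $\delta$, and lands in a $\delta$-closed $N$, so it witnesses $\delta$-supercompactness for every $\delta$.

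The set model will arise by reflecting along Magidor's characterization of supercompactness. Fixing an inaccessible $\theta>\kappa$, Magidor's theorem guarantees that the $\kappa$-Magidor elementary submodels $M\prec V_\theta$ form a stationary set. Choose such an $M$, let $\pi:M\to V_\alpha$ be its transitive collapse (so $\alpha<\kappa$), set $i=\pi^{-1}:V_\alpha\to V_\theta$, and let $\beta=\crit(i)=M\cap\kappa$, so $i(\beta)=\kappa$. All high-jump measures on $\p_\kappa(\mu)$ for $\mu\in[\delta_0(\kappa),\theta)$ lie in $V_{\mu+2}\subseteq V_\theta$, and the high-jump condition is bounded-quantifier over $V_{\mu+2}$, so $V_\theta\models$ ``$\kappa$ has a high-jump measure on $\p_\kappa(\mu)$ for every sufficiently large $\mu$.'' Elementarity of $i$ transfers this statement to $V_\alpha\models$ ``$\beta$ has a high-jump measure on $\p_\beta(\mu)$ for every sufficiently large $\mu$.'' Since $\alpha$ is inaccessible (by elementarity from $\theta$), $V_\alpha$ models \ZFC\ and is the required set model.

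The main obstacle is the assumption that $V$ contains an inaccessible $\theta>\kappa$. When $V$ has no such $\theta$, one instead uses $V_\kappa$ as the set model and produces the witness $\alpha<\kappa$ by reflecting ambitiousness itself: the Shelah-style characterization in item~(\ref{amb-uf-ptwise}) packages ambitiousness in a form analogous to the Shelah cardinal property, and the standard Shelah-reflection argument applied to the normal measure on $\kappa$ derived from the ambitious embedding $i=k\circ j$ for sufficiently large $\delta$ yields stationarily many ambitious $\alpha<\kappa$, each of which then carries high-jump measures on $\p_\alpha(\mu)$ for all $\mu\in[\delta_0(\alpha),\kappa)$ by the first part of the proposition.
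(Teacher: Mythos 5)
Your first half is correct and matches the paper: applying item~(\ref{amb-uf}) with $\lambda=\kappa$ gives, for each $\delta\geq\delta_0$, a normal ultrafilter $\calU$ on $\p_\kappa(\delta)$ with $j_\calU(f)(\kappa)<\delta$ for every $f:\kappa\to\kappa$, which is exactly the high-jump property.

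For the strict consistency gap, your argument is considerably more roundabout than it needs to be, and the case you flag as ``the main obstacle'' does not actually occur. The paper's key observation is that ambitiousness of $\kappa$ \emph{already} implies there are unboundedly many inaccessibles above $\kappa$: given any $\lambda\geq\kappa$, take $j:V\to M$, $k:M\to N$ as in the definition; since $\kappa$ is (at least) Mahlo, $M$ thinks $j(\kappa)$ is Mahlo, so there is $\theta\in(\lambda,j(\kappa))$ inaccessible in $M$; then $k(\theta)>\lambda$ is inaccessible in $N$, and because $k[j(\kappa)]\subseteq\delta$ and $N$ is $\delta$-closed, $k(\theta)$ is genuinely inaccessible in $V$. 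With that, one picks any inaccessible $\theta$ above the witness $\delta_0$ and takes $V_\theta$ directly as the set model of \ZFC\ plus ``there are high-jump measures on $\p_\kappa(\delta)$ for all $\delta\geq\delta_0$.'' In your Case~1 you do find an inaccessible $\theta$, but then detour through a Magidor-model collapse down to $V_\alpha$, which is unnecessary: $V_\theta$ itself already suffices, provided you also verify $\delta_0<\theta$ (you use the range $\mu\in[\delta_0(\kappa),\theta)$ without justifying it is nonempty). Your Case~2, where no inaccessible exists above $\kappa$, is vacuous by the above; moreover the ``standard Shelah-reflection argument'' you invoke there is not spelled out and would not obviously establish that ambitiousness reflects below $\kappa$ — ambitiousness quantifies over all $\lambda\geq\kappa$ and is not a locally checkable property of the sort that reflects along a normal measure on $\kappa$ without further work. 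So the gap is: you never prove the inaccessibility claim that collapses the case split, and the fallback case rests on an unsubstantiated reflection step.
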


\begin{proof}
If $\kappa$ is ambitious, then there is $\delta_0 > \kappa$ such that for all $\delta\geq\delta_0$, there is a normal ultrafilter $\calU$ on $\p_\kappa(\delta)$ such that for all $f : \kappa \to \kappa$, $\{ z \in \p_\kappa(\delta) : f(z \cap \kappa) < \ot(z) \} \in \calU$.  This means that if $j : V \to M$ is the ultrapower embedding via $\calU$, then for all $f : \kappa \to \kappa$, $j(f)(\kappa) < \delta$.  Thus $\calU$ is a high-jump measure.

To prove the strict consistency comparison, we show that there are unboundedly many inaccessible cardinals above an ambitious cardinal $\kappa$.  Then if $\theta$ is any inaccessible above the witness $\delta_0$ for $\kappa$, then $V_\theta \models$ ZFC +  ``For all $\delta\geq\delta_0$, there is a high-jump measure on $\p_\kappa(\delta)$.''  To this end, let $\lambda \geq \kappa$ be arbitrary.  Let $j : V \to M$, $k : M \to N$ be as in the definition of ambitiousness.  There is $\theta$ such that $\lambda < \theta < j(\kappa)$ and $M \models \theta$ is inaccessible.  There is $\delta$ such that $N$ is $\delta$-closed and $k(\theta) < \delta$.  Thus $k(\theta)$ is inaccessible in $V$, and $k(\theta) > \lambda$.
\end{proof}

\section{Forcing with Magidor models}
\label{sec:hopeless}
In this final section, we end on a negative note.  We first describe a variety of circumstances under which forcing with Magidor models as the small type in a Neeman forcing will force the weak Chang's Conjecture, and we demonstrate a certain reversal, yielding a forcing characterization of ambitious cardinals. 
Then we address a claim of Neeman \cite[\S5.1]{neeman} about iterating his forcing and show that ambitious cardinals provide a counterexample.  Finally, we present an argument of Mohammapour that his forcing with Veli\v{c}kovi\'c \cite{mv} does not permit $\wcc(\omega_2,\cof(\omega_1))$.  This seems to close the door on combining compactness and hugeness at $\omega_2$ with existing side-conditions technology.

\subsection{A forcing characterization of ambitiousness}

If $M \prec V_\theta$ is a Magidor model and $\mathbb P \in M$ is a partial order, then $p \in \mathbb P$ is a strong master condition for $M$ if and only if it is an ordinary master condition for $M$.  For suppose $p$ is an ordinary master condition.  If we take a dense $D \subseteq \mathbb P \cap M$ and let $\pi : M \to V_\eta$ be the transitive collapse, then $\pi[D] =E \in V_\eta$ is a dense subset of $\pi(\mathbb P)$.  Then $\pi^{-1}(E) \cap M = D$, and since $p$ is a master condition for $M$, $p$ forces the generic filter $\dot G$ to meet $\pi^{-1}(E)$ in $M$, i.e.\ it forces $\dot G \cap D \not= \emptyset$.

Let us fix the following assumptions and notation for the remainder of this section:

\begin{enumerate}
\item $\mu<\kappa<\theta$ are regular cardinals, and $\theta$ is inaccessible.
\item $\mathbb P \in V_\theta$ is a partial order.
\item $\frak A = \la V_\theta,\in,\mu,\kappa,\mathbb P \ra$.
\item $\calS_0$ is the set of all $\kappa$-Magidor $M \prec \frak A$ that are ${<}(M\cap\kappa)$-closed.
\item $\calT_0$ is the set of all $V_\alpha \prec \frak A$ such that $\cf(\alpha) \geq \kappa$.
\item $\mathbb P$ is proper for $\calS_0$.
\item If $G \subseteq \mathbb P$ is generic, define in $V[G]$:
\begin{enumerate}
\item $\calS = \{ M[G] : M \in \calS_0 \text{ and } G \text{ has a master condition for } M \}$
\item $\calT = \{ W[G] : W \in \calT_0 \}$
\item $\mathbb Q_0 = \mathbb P_{\mu,\calS,\calT}$
\item $\mathbb Q_1 = \mathbb P_{\mu,\calS,\calT}^\mathrm{dec}$  (as defined in \cite{neeman})
\item $\mathbb Q_2 = \mathbb P_{\mu,\calS,\calT}^\mathrm{dec*}$
\end{enumerate}
\end{enumerate}

\begin{claim}
\label{SmeetT}
Suppose $M,N \in \calS_0$, $W \in \calT_0$, and $G \subseteq \mathbb P$ is generic.
\begin{enumerate} 
\item\label{trivmaster} $1_{\mathbb P}$ is a strong master condition for $W$. 
\item\label{grndmeet} $M \cap W \in W \cap \calS_0$.
\item\label{eeprop} If $M \sqsubseteq N$, then $M[G] \in \calS$ iff $N[G] \in \calS$.
\item\label{extmeet1} If $M[G] \in \calS$, and $W \in M$, then $M[G] \cap W[G] = (M \cap W)[G] \in \calS \cap W[G]$.
\end{enumerate}
\end{claim}

\begin{proof}
(\ref{trivmaster}) holds since $\p(\mathbb P) \subseteq M$ for all $M \in \calT_0$.

For (\ref{grndmeet}), note that $M \cap W$ is $\kappa$-Magidor since it is an initial segment of $M$, elementary in $\frak A$ by Lemma \ref{rankmeet}, and a member of $W$ by the ${<}\kappa$-closure of $W$.

For (\ref{eeprop}), suppose $M \sqsubseteq N$ are both in $\calS_0$.  Recall that for $X \prec \la H_\theta,\in,\mathbb P \ra$, $p$ is a strong master condition for $X$ iff $p$ forces that $\dot G \cap X$ is generic over $V$ for the partial order $\mathbb P \cap X$.
Let $\la p_\alpha : \alpha < | \mathbb P | \ra$ be an enumeration of $\mathbb P$ in $M$.  Since $M \sqsubseteq N$, $M \cap \mathbb P := \mathbb P^* = \{ p_\alpha : \alpha \in M \cap | \mathbb P | \} = N \cap \mathbb P $.  Thus $p$ is a strong master condition for $M$ or $N$ if and only if $p$ forces that $\dot G \cap \mathbb P^*$ is generic over $V$ for $\mathbb P^*$.

For (\ref{extmeet1}), first note that since $M \cap W \sqsubseteq M$, it follows by (\ref{eeprop}) that $(M \cap W)[G] \in \calS$.  Since $M \cap W \in W$, $(M \cap W)[G] \in W[G]$.  
To show $(M \cap W)[G] = M[G] \cap W[G]$, let $x \in M[G] \cap W[G]$.  Since $M[G] \models W[G] = \{ \tau^G : \tau \in W$ is a $\mathbb P$-name$\}$, there is $\mathbb P$-name $\tau \in M \cap W$ such that $x = \tau^G$.  Thus $M[G] \cap W[G] \subseteq (M \cap W)[G]$.   The reserve inclusion is trivial.
\end{proof}


\begin{claim}
\label{ifstat}
Suppose $\calS_0$ is stationary in $V$ and $G \subseteq \mathbb P$ is generic.
\begin{enumerate}
\item\label{statpresS} $\calS$ is stationary in $V[G]$.
\item\label{cofpresT} $\calT = \{ V_\alpha^{V[G]} \prec V_\theta^{V[G]} : \cf(\alpha) \geq \kappa \wedge \mu,\kappa,\mathbb P \in V_\alpha \}$.
\end{enumerate}
\end{claim}

\begin{proof}
For (\ref{statpresS}), consider a name for a function $\dot F : (H_{\theta}^{V[G]})^{<\omega} \to H_\theta^{V[G]}$.  Let $p \in \mathbb P$ be arbitrary.  Let $\theta^*>\theta$ be regular and let $\frak B = \la H_{\theta^*},\in,\mathbb P,\dot F,p \ra$.
Let $M^* \prec \frak B$ be such that $M^* \cap H_\theta = M \in \calS_0$.  By $\calS_0$-properness, let $p' \leq p$ be a master condition for $M$.  Then $p'$ forces that $M[G]$ is closed under $\dot F$ and $M[G] \in \calS$.

For (\ref{cofpresT}), note that by the $\calS_0$-properness of $\mathbb P$, the class of ordinals $\cof({\geq}\kappa)$ is the same in $V[G]$.  It is a standard fact that if $\mathbb P \in V_\alpha$, then $V_\alpha^{V[G]} = V_\alpha[G]$.  If $\mathbb P \in V_\alpha$ and $V_\alpha \prec V_\theta$, then if $V_\alpha[G] \models \varphi(\tau^G)$, this is forced by some $p \in G$.  Thus $V_\alpha \models$ ``$p \Vdash \varphi(\tau)$'', and therefore $V_\theta \models$ ``$p \Vdash \varphi(\tau)$'', so $V_\theta[G] \models \varphi(\tau^G)$.  On the other hand, suppose $V_\alpha[G] \prec V_\theta[G]$.  By Laver's result \cite{laver} that the ground model is a definable class of a forcing extension, there is a translation of formulas $\varphi(\vec x) \mapsto \varphi(\vec x)^{\mathrm{Gr}}$ such that for $\vec a \in V_\alpha^{<\omega}$, $V_\alpha \models \varphi(\vec a)$ if and only if $V_\alpha[G] \models \varphi(\vec a)^{\mathrm{Gr}}$.  Thus for all $\vec a \in V_\alpha^{<\omega}$, $V_\alpha \models \varphi(\vec a)$ iff $V_\alpha[G] \models \varphi(\vec a)^{\mathrm{Gr}}$ iff $V_\theta[G] \models \varphi(\vec a)^{\mathrm{Gr}}$ iff $V_\theta \models \varphi(\vec a)$.
\end{proof}


\begin{lemma}
\label{forcewcc}
Suppose $V_\theta \models$ ``$\kappa$ is ambitious,'' and $\mathbb P$ preserves the regularity of $\mu$.  Let $A = \{ M \cap \kappa : M \in \calS_0 \}$.  Then for each $n<3$, $\mathbb P * \dot{\mathbb Q}_n$ forces $\wcc(\kappa,A)$.
\end{lemma} 

\begin{proof}
By Claims \ref{SmeetT} and \ref{ifstat} and by the preservation of $\mu$, $\la \calS,\calT \ra$ is appropriate for $\mu$ and $V_\theta[G]$, whenever $G \subseteq \mathbb P$ is generic.
By \cite{neeman}, or Lemma \ref{cards}, $\mathbb P * \dot{\mathbb Q}_n$ preserves the regularity of $\kappa$ and $\theta$ and forces that $\theta = \kappa^+$.
Let $\dot F$ be a $\mathbb P * \dot{\mathbb Q}_n$-name for a function from $\theta^{<\omega}$ to $\theta$, let $\dot f$ be a name for a function from $\kappa$ to $\kappa$, and let $\la p_0,\dot q_0 \ra \in \mathbb P * \dot{\mathbb Q}_n$ be arbitrary.  Let $\frak B = \la H_{\theta^+},\in,\frak A,\dot F,\dot f,\la p_0,\dot q_0\ra,\lhd \ra$, where $\lhd$ is a well-order of $H_{\theta^+}$.  Let $\frak C$ be the result of restricting the Skolem functions for $\frak B$ to $V_\theta$.  Let $F' : V_\theta^{<\omega} \to V_\theta$ be another function.  By Theorem \ref{amb-ee}, there is a $\kappa$-Magidor $M \prec \frak C$ that is ${<}(M\cap\kappa)$-closed, $M$ is closed under $F'$, and for every $\alpha<\kappa$, there is a $\kappa$-Magidor $N \prec \frak C$ such that 
$M \sqsubseteq N$, $N$ is ${<}(N\cap\kappa)$-closed, and $\ot(N \cap \theta)>\alpha$.

It follows that there are stationary-many $\kappa$-Magidor $M \prec \frak C$ that are ${<}(M\cap\kappa)$-closed and can be end-extended to $\kappa$-Magidor $N \prec \frak C$ with the same closure and such that $\ot(N \cap \theta)$ is arbitrarily large below $\kappa$.  Let $\calE$ be the set of such $M$. For each $\alpha<\theta$ choose $M_\alpha \in \calE$ such that $\alpha \in M_\alpha$.  If $\cf(\alpha) \geq \kappa$, then $\sup(M_\alpha \cap \alpha) < \alpha$.  By Fodor's Lemma and the inaccessibility of $\theta$, there is a stationary $X \subseteq \theta \cap \cof({\geq}\kappa)$ and an $M^*$ such that for every $\alpha \in X$, $M_\alpha \cap \alpha = M^*$.  We may assume that for every $\alpha \in X$, $V_\alpha \prec \frak C$.  It follows that $M^* \prec \frak C$, $M^*$ is $\kappa$-Magidor, and $M^*$ is ${<}(M^*\cap\kappa)$-closed.

By $\calS_0$-properness, let $p_1 \leq p_0$ be a master condition for $M^*$.
$p_1$ forces that $M^*[\dot G] \in \calS$ and thus by \cite{neeman}, or Lemma \ref{spdec}, that $\dot q_0$ can be extended to include the model $M^*[\dot G]$.  Let $\dot q_1$ be a name for such an extension.
Let $\la p_2,\dot q_2 \ra \leq \la p_1,\dot q_1 \ra$ decide the value of $\dot f(M^*\cap\kappa)$, say as $\xi<\kappa$.  
Let $\alpha \in X$ be such that $\la p_2,\dot q_2 \ra \in V_\alpha$.  Let $N \sqsupseteq M_\alpha$ be such that $N$ is $\kappa$-Magidor, $N \prec \frak C$, $N$ is ${<}(N\cap\kappa)$-closed, and $\ot(N \cap \theta)> \xi$.  Note that by Claim \ref{SmeetT}, $p_1$ is a master condition for $N$.

Let $\dot s_2$ be a name for the set of models appearing in $\dot q_2$.  Let $\dot s_3$ be a name such that $\Vdash_{\mathbb P} \dot s_3 = \dot s_2 \cup \{ V_\alpha[\dot G]
,N[\dot G] \}$.  Then $p_2$ forces that $\dot s_3$ is closed under intersections 
since $N \cap V_\alpha = M^*$, and thus by Claim \ref{SmeetT} that $N[\dot G] \cap V_\alpha[\dot G] = M^*[\dot G] \in \dot s_2$, and that if $R \in \dot s_2$, then $V_\alpha[\dot G] \cap R = R$ and $N[\dot G] \cap R = N[\dot G] \cap V_\alpha[\dot G] \cap R = M^*[\dot G] \cap R \in \dot s_2$.
Furthermore, it is forced that $\dot s_3 \cap N[\dot G] = (\dot s_3 \cap M^*[\dot G]) \cup \{ V_\alpha[\dot G] \}$, which is forced to be a member of $N[\dot G]$.

If $n = 0$, let $\dot q_3 = \dot s_3$.
If $n=1$ and $\dot h$ is a name for the decoration of $\dot q_2$, let $\dot h'$ be a name for the extension of $\dot h$ that assigns the empty set to $V_\alpha[\dot G]$ and $N[\dot G]$, and let $\dot q_3$ be a name for $\la \dot s_3,\dot h' \ra$.
If $n = 2$ and $\dot h$ is a name for the decoration of $\dot q_2$, let $\dot q_3$ be a name for $\la \dot s_3,\dot h \ra$.
In each case, $\la p_2,\dot q_3 \ra \leq \la p_2,\dot q_2 \ra$, and $\la p_2,\dot q_3 \ra$ is a master condition for $\sk^{\frak B}(N)$.
Thus $\la p_2,\dot q_3 \ra$ forces that $N$ is closed under $\dot F$ and that $\dot f(N \cap \kappa) < \ot(N \cap \theta)$.  By the arbitrariness of $\dot F$, $\dot f$, and $\la p_0,\dot q_0\ra$, $\wcc(\kappa,A)$ is forced.
\end{proof}

\begin{theorem}
Suppose $\mathbb P * \dot{\mathbb Q}_2$ preserves the class $\cof({\geq}\mu)$.  Then $\mathbb P * \dot{\mathbb Q}_2$ forces $\wcc(\mu^+,\cof(\mu))$ if and only if $V_\theta \models$ ``$\kappa$ is ambitious.''
\end{theorem}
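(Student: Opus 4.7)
Both directions turn on Theorem~\ref{amb-ee}, the Magidor-model characterization of ambitiousness, together with the projection Lemma~\ref{*proj}; recall that after forcing $\mu^+=\kappa$ and $\mu^{++}=\theta$ by Lemma~\ref{cards}.

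For the implication $V_\theta\models\kappa$ is ambitious $\Rightarrow$ $\wcc(\mu^+,\cof(\mu))$ is forced, my plan is to mimic the proof of Theorem~\ref{wps-ee}, with Magidor models in a countable-language expansion $\frak A^{**}$ of $\frak A$ replacing countable ones. Given names $\dot F:(V_\theta)^{<\omega}\to V_\theta$, $\dot f:\mu^+\to\mu^+$ and a condition $p_0$, I expand $\frak A$ with predicates coding these to $\frak A^{**}$ on $V_\theta$. By Theorem~\ref{amb-ee} together with the Shelah-style ultrafilter formulation of ambitiousness (choosing a normal ultrafilter on $\p_\kappa(\delta)$ concentrating on critical points of cofinality $\mu$), I obtain a ${<}(M\cap\kappa)$-closed $\kappa$-Magidor $M\prec\frak A^{**}$ with $\cf(M\cap\kappa)=\mu$ that admits $\kappa$-Magidor end-extensions elementary in $\frak A^{**}$ of arbitrarily large $V_\theta$-order-type. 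Via Lemma~\ref{spdec} extend $p_0$ to $p_1$ with $M$ in its model-set, deciding $\dot f(M\cap\kappa)=\xi<\kappa$. Pick a $\kappa$-Magidor $N\prec\frak A^{**}$ of the form $i[V_\eta]$ with $M\sqsubseteq N$, $N\cap\kappa=M\cap\kappa$, $\ot(N\cap\theta)>\xi$, $N$ still ${<}(N\cap\kappa)$-closed, and $\rank(N)$ large enough to admit a separator $V_\alpha\in\calT$ in $(\rank(M),\rank(N))$ with $N\cap V_\alpha=M$. Then $s_2=s_1\cup\{V_\alpha,N\}$ is closed under intersections, so $p_2=\la s_2,g\ra$ is a condition below $p_1$ forcing $N\in\calS^{\dot G}_\theta$; a standard master-condition argument shows $p_2$ forces $N$ closed under $\dot F$, giving the wCC witness $\dot f(N\cap\mu^+)=\xi<\ot(N\cap\mu^{++})$ with $\cf(N\cap\mu^+)=\mu$.

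For the converse, I argue contrapositively. Assuming $V_\theta\not\models\kappa$ is ambitious, Theorem~\ref{amb-ee} supplies a countable-language structure $\frak A^*$ on $V_\theta$, which I may take to include $\frak A$ as a reduct, such that every ${<}(M\cap\kappa)$-closed $\kappa$-Magidor $M\prec\frak A^*$ has a least bound $b(M)<\kappa$ on the order-types of $\kappa$-Magidor end-extensions of $M$ elementary in $\frak A^*$. In $V$ define
\[
f(\beta)=\sup\{b(M):M\in\calS,\ M\prec\frak A^*,\ M\cap\kappa=\beta\}+1,
\]
which is below $\kappa$ by inaccessibility. I claim $f$ witnesses $\neg\wcc(\mu^+,\cof(\mu))$ in $V[H*G]$. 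Otherwise, by the canonical-function formulation of wCC, there exist $\alpha<\theta$ and a stationary $A\subseteq\mu^+\cap\cof(\mu)$ with $f(\beta)<\psi_\alpha(\beta)$ for all $\beta\in A$; fix a surjection $\sigma_\alpha:\kappa\to\alpha$ in $V$. By Lemma~\ref{*proj}, $\{M\in\calS^G_\theta:M\cap\kappa\in A\}$ is stationary in $V[H*G]$, and intersecting with the club of models containing $\alpha,\sigma_\alpha$ and closed under $\frak A^*$'s Skolem functions produces some such $M$ that additionally is $\prec\frak A^*$. Then
\[
f(M\cap\kappa)<\psi_\alpha(M\cap\kappa)=\ot(M\cap\alpha)\leq\ot(M\cap\theta)\leq b(M)<f(M\cap\kappa),
\]
a contradiction.

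The principal obstacle is the forward direction: securing a single $M$ that is simultaneously in $\calS$ (so ${<}(M\cap\kappa)$-closed, as required to be placed in the generic), has $\cf(M\cap\kappa)=\mu$, and possesses the Magidor end-extension property inside $\frak A^{**}$. This requires blending the Shelah/high-jump ultrafilter formulation of ambitiousness with the Magidor closure demand of $\calS$, by selecting a normal ultrafilter on $\p_\kappa(\delta)$ whose derived ultrapower is ${<}\crit$-closed with $\cf(\crit)=\mu$. Once $M$ is in hand, the construction of the separator $V_\alpha\in\calT$ and the extension $p_2$ is routine bookkeeping analogous to Theorem~\ref{wps-ee}.
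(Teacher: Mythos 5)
Your overall architecture matches the paper's (Theorem~\ref{amb-ee} $+$ Lemma~\ref{*proj}, separate treatment of the two implications), but both directions have genuine gaps, and the ``principal obstacle'' you flag at the end is not actually an obstacle.

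\textbf{The cofinality issue is automatic.} For every $M\in\calS$, $M\cap\kappa$ is an inaccessible cardinal of $V$ strictly between $\mu$ and $\kappa$ (being the critical point of the inverse collapse, and $M$ is ${<}(M\cap\kappa)$-closed so $M\cap\kappa$ is regular in $V$). Since the two-step iteration is assumed to preserve $\cof(\geq\mu)$ and makes $\kappa=\mu^+$, every $M\in\calS$ automatically has $\cf(M\cap\kappa)=\mu$ \emph{in the extension}, with no intervention. Your plan to ``select a normal ultrafilter concentrating on critical points of cofinality $\mu$'' is both unnecessary and impossible: for a normal fine ultrafilter, almost every $z$ has $z\cap\kappa$ inaccessible, so it cannot have $V$-cofinality $\mu$ below itself. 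The paper dispenses with this in one sentence at the start of its proof.

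\textbf{Forward direction: the separator is not free.} You write ``Pick a $\kappa$-Magidor $N\prec\frak A^{**}$\ldots and $\rank(N)$ large enough to admit a separator $V_\alpha\in\calT$ in $(\rank(M),\rank(N))$ with $N\cap V_\alpha=M$.'' Theorem~\ref{amb-ee} gives end-extensions $N\sqsupseteq M$ with $\ot(N\cap\theta)$ arbitrarily large, but it says nothing about \emph{where} the new ordinals of $N$ begin. You need $N$'s new elements to jump past some $\alpha$ with $V_\alpha\in\calT$, $p_1\in V_\alpha$, and $\alpha\in N$; none of that follows from large $\rank(N)$ alone. The paper closes this hole with a Fodor argument: it chooses $M_\alpha\in\calE$ with $\alpha\in M_\alpha$ for each $\alpha$, notes $\sup(M_\alpha\cap\alpha)<\alpha$ when $\cf(\alpha)\geq\kappa$, and uses Fodor plus the inaccessibility of $\theta$ to find a stationary $X\subseteq\theta\cap\cof(\geq\kappa)$ and a fixed $M^*$ with $M_\alpha\cap V_\alpha=M^*$ for all $\alpha\in X$. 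Then for $\alpha\in X$ with $p_1\in V_\alpha$, any end-extension $N\sqsupseteq M_\alpha$ satisfies $N\cap V_\alpha=M^*$ and $\alpha\in N$, so $V_\alpha$ really is a valid separator. This Fodor step is the missing idea in your argument.

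\textbf{Reverse direction: two problems.} First, your function $f(\beta)=\sup\{b(M):M\in\calS,\ M\prec\frak A^*,\ M\cap\kappa=\beta\}+1$ is not obviously below $\kappa$. For $\beta$ that reflects enough supercompactness, there are Magidor $M$ with $M\cap\kappa=\beta$ and $\ot(M\cap\theta)$ unbounded below $\kappa$; since $b(M)>\ot(M\cap\theta)$, the supremum is $\kappa$ and $f$ is ill-defined. ``By inaccessibility'' does not rescue this. The paper instead defines $f$ \emph{in the extension}, using the generic chain of models: it proves that among models appearing in $G$ with $M\cap\kappa=\alpha$, there is a first one $M_\alpha$ and all others end-extend it, and then sets $f(\alpha)=b(M_\alpha)$; the ill-definedness evaporates because there is a single distinguished $M_\alpha$. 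Second, your argument invokes Lemma~\ref{*proj}, which requires $\calS$ stationary, and hence implicitly assumes $V_\theta\models$ ``$\kappa$ is supercompact.'' The paper splits the converse into two cases. In the non-supercompact case, Magidor models become impossible past a certain $\lambda_0<\theta$, only transitive models occur above $\lambda_0$, $\theta$ collapses to $\mu^+$, and then $\beta\mapsto\beta^+$ dominates all canonical functions, so $\wcc(\theta,A)$ fails for \emph{every} $A$. Your proof omits this case entirely.

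In summary: your sketch has the correct skeleton and correctly identifies Theorem~\ref{amb-ee} and Lemma~\ref{*proj} as the engines, but you need (i) to drop the nonexistent cofinality obstacle, (ii) a Fodor argument to produce a valid separator in the forward direction, and (iii) a genuinely two-case analysis in the reverse direction, with $f$ defined from the generic rather than from a global supremum in $V$.
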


\begin{proof}
Suppose $V_\theta \models$ ``$\kappa$ is ambitious.''  By Lemma \ref{forcewcc}, $\wcc(\kappa,A)$ holds, where $A = \{ M \cap \kappa : M \in \calS_0 \}$.   For every $\kappa$-Magidor $M \prec V_\theta$, $V \models \mu < \cf(M \cap \kappa) < \kappa$, so since all cardinals between $\mu$ and $\kappa$ are collapsed, $\mathbb P * \dot{\mathbb Q}_2$ forces that $\cf(M \cap \kappa) = \mu$.

For the other direction, suppose  $V_\theta \models$ ``$\kappa$ is not ambitious.''

\textbf{Case 1}:  $V_\theta \models$ ``$\kappa$ is not supercompact.''  Then there is $\lambda_0 \in [\kappa,\theta)$ such that for all $\lambda \geq \lambda_0$, there is no normal fine ultrafilter on $\p_\kappa(\lambda)$.  It follows that for every $\lambda \in [\lambda_0,\theta)$, there is no $\kappa$-Magidor $M \prec V_\theta$ with $\lambda \in M$.  This is because if there were such an $M$, then there would be an elemetnary $i : V_\eta \to V_\theta$ for some $\eta<\kappa$, with $i(\crit(i)) = \kappa$.  If $i(\bar\kappa) = \kappa$ and $i(\bar\lambda) = \lambda$, then we could define a normal fine ultrafilter $\calU$ on $\p_{\bar\kappa}(\bar\lambda)$ by $X \in \calU$ iff $i[\bar\lambda] \in i(X)$.  But then $\calU \in V_\eta$, and by elementarity, $V_\theta \models$ ``$\kappa$ is $\lambda$-supercompact,'' a contradiction.

Let $G \subseteq \mathbb P$ be generic.  In $V[G]$, for any $q \in\mathbb Q_2$
the only models included in $p$ of rank $> \lambda_0$ are transitive models.  Thus if $V_\alpha \in V_\beta$ are two consecutive transitive models, with $\alpha > \lambda_0$, and $H \subseteq \mathbb Q_2$
is generic over $V[G]$, then the union of the decorations attached to $V_\alpha$ that appear in $H$ constitutes a surjection from $\mu$ to $V_\beta$.  But notice also that Corollary \ref{thetacc} still holds for $\mathbb Q_2$,
and so $\theta = \mu^+$ in $V[G][H]$.  Now in $V$, let $f : \theta \to \theta$ be the function $\beta \mapsto \beta^+$.  Let $\alpha<(\theta^+)^V = (\theta^+)^{V[G][H]}$ and let $\sigma : \theta \to \alpha$ be a sujrection in $V$.  For all $\beta < \theta$, $\ot(\sigma[\beta]) <  (\beta^+)^V = f(\beta)$.  Thus in $V[G][H]$, $f$ dominates each canonical function pointwise, and so $\wcc(\theta,A)$ fails for every $A \subseteq \theta$.

\textbf{Case 2}:  $V_\theta \models$ ``$\kappa$ is supercompact but not ambitious.''  In this case, $\calS_0$ is stationary in $V$,
so $\mathbb P * \dot{\mathbb Q}_2$
forces that $\kappa = \mu^+$ and $\theta = \mu^{++}$.  By Theorem \ref{amb-ee}, there is a structure $\frak B$ on $V_\theta$ in a countable language and a function $b : V_\theta \to \kappa$ such that for all $\kappa$-Magidor $M \prec \frak B$, if $N \prec \frak B$ is $\kappa$-Magidor and $M \sqsubseteq N$, then $\ot(N \cap \theta) < b(M)$.  We may assume that $\frak B$ has definable Skolem functions.

Let $G*H \subseteq \mathbb P * \dot{\mathbb Q}_2$
be generic.  $H$ yields a chain of models $\la M_\alpha : \alpha < \theta \ra$.  
Since the set of $M \in \calS$ that appear in $H$ is stationary, there are stationary-many $M$ that appear in $H$ that are closed under the Skolem functions for $\frak B$.
For $\alpha < \kappa$ such that $\alpha = M \cap \kappa$ for some model $M$ appearing in $H$ such that $M \cap V \prec \frak B$, let $M_\alpha$ be the first such model in the chain.  We claim that for all $N$ appearing in $H$ such that $N \cap V \prec \frak B$ and $N \cap \kappa = \alpha$, we have $M_\alpha \sqsubseteq N$.  If $N$ is such a model, then $M_\alpha \cap N$ also appears in $H$, and since $M_\alpha \cap N \cap V \prec \frak B$ and $M_\alpha \cap N \cap \kappa = \alpha$, we must have that $M_\alpha \cap N = M_\alpha$ by the minimality of $M_\alpha$.
If $N \not= M_\alpha$, then since $M_\alpha \notin N$, there must be transitive models appearing between them.   
Let $W$ be the least such transitive model.  Then $W \cap N$ also appears in $H$.  If $W \cap N \not= M_\alpha$ then $W \cap N$ occurs either before or after $M_\alpha$.  If were to occur after, then since there are no transitive models between $M_\alpha$ and $W \cap N$, we would have $M_\alpha \in W \cap N$.  But this is impossible, since $M_\alpha \cap \kappa = W \cap N \cap \kappa$.
But also $W \cap N$ cannot occur before $M_\alpha$, since $M_\alpha \cap N = M_\alpha$, so $M_\alpha \subseteq W \cap N$ and thus $\rank(M_\alpha) \leq \rank(W \cap N)$.  Thus $M_\alpha = W \cap N  \sqsubseteq N$.

In $V[G][H]$, define $f : \kappa \to \kappa$ by $f(\alpha) = b(M_\alpha \cap V)$ if $M_\alpha$ is defined, and otherwise $f(\alpha) = 0$.  Now let $X \subseteq \kappa \cap \cof(\mu)$ be a stationary set in $V[G][H]$.  Since $\mathbb P$ preserves that $\cf(M \cap \kappa) \geq \mu$ for all $M \in \calS$, Lemma \ref{*proj} implies that the set of $M$ appearing in $H$ with $M \cap \kappa \in X$ is stationary.  Let $\kappa<\gamma < \theta$, and let $\sigma : \kappa \to \gamma$ be a bijection in $V[G][H]$.  
Let $M$ be a model appearing in $H$ such that $M \cap V \prec \frak B$, $\gamma \in M$, $M$ is closed under $\sigma$ and $\sigma^{-1}$, and $M \cap \kappa = \alpha \in X$.
There are $N,N_\alpha \in \calS_0$ such that $N[G] = M$, $N_\alpha[G] = M_\alpha$, $M \cap V = N$, and $M_\alpha \cap V = N_\alpha$.
Since $N_\alpha \sqsubseteq N$, $\ot(M \cap \theta) < b(N_\alpha) = f(\alpha)$.  If $\psi_\gamma$ is the canonical function for $\gamma$ defined using $\sigma$, then $\psi_\gamma(\alpha) = \ot(\sigma[\alpha]) = \ot(M\cap\gamma)< \ot(M \cap \theta) < f(\alpha)$.  Since $X$ was an arbitrary stationary subset of $\kappa \cap \cof(\mu)$, the set of $\beta \in \kappa \cap \cof(\mu)$ such that $\psi_\gamma(\beta) \geq f(\beta)$ is nonstationary.  Thus $f$ dominates all canonical functions modulo clubs on $\kappa \cap \cof(\mu)$, so $\wcc(\kappa,\cof(\mu))$ fails.
\end{proof}

We note that the hypothesis of the above theorem is satisfied when $\mathbb P$ is the trivial forcing.

\begin{prop}
Suppose $\mathbb P * \dot{\mathbb Q}_2$ preserves $\cof({\geq}\mu)$.
Then $\mathbb P * \dot{\mathbb Q}_2$ forces that $\ns_\kappa \rest \cof(\mu)$ is \textbf{not} weakly presaturated.
\end{prop}

\begin{proof}
If $V_\theta \models$ ``$\kappa$ is not supercompact,'' then the conclusion follows from the above theorem.  So assume $\kappa$ is supercompact in $V_\theta$.  Let $B \subseteq \kappa$ and $f : B \to \kappa$ be as in Proposition \ref{maxmag}, so that $\calB = \{ M \in \calS_0 : M \cap \kappa \in B \}$ is stationary, and whenever $M \in \calB$, then $\ot(M \cap \theta) \leq f(M \cap \kappa)$.
If $G * H \subseteq \mathbb P * \dot{\mathbb Q}_2$ is generic, then there will be stationary-many $M \in \calB$ such that $M[G]$ appears in $H$.

Using Corollary \ref{wps-wcc}, it suffices to show that $\wcc(\kappa,B)$ fails.  Towards a contradiction, suppose that there is $\gamma<\theta$ and a bijection $\sigma : \kappa \to \gamma$ such that $B' = \{ \alpha \in B : f(\alpha) < \ot(\sigma[\alpha]) \}$ is stationary.  By Lemma \ref{*proj}, there is $M \in \calS_0$ such that $M[G]$ appears in $H$, $M \cap \kappa \in B'$, and $M$ is closed under $\sigma$ and $\sigma^{-1}$.  
But then $\ot(\sigma[M \cap \kappa]) = \ot(M \cap \gamma) < \ot(M \cap \theta) \leq f(M \cap \kappa)$, a contradiction.
\end{proof}

\begin{question*}
Under some large cardinal hypothesis, can $\mathbb P * \dot{\mathbb Q}_2$, or some similar poset, force the existence of a weakly presaturated ideal on $\omega_2$?
\end{question*}

\subsection{A claim of Neeman}

At the end of Section 5.1 of \cite{neeman}, Neeman gave a brief description of an argument that if $\kappa$ is supercompact and $\theta>\kappa$ is weakly compact, then the tree property can be forced simultaneously at $\omega_2$ and $\omega_3$ by a two-step iteration of the form $\mathbb P * \dot{\mathbb Q}$, where $\mathbb P$ is a finite-conditions two-types poset making $\kappa = \omega_2$, and $\dot{\mathbb Q}$ is a countable-conditions two-types poset of size $\theta$.  In private correspondence, he clarified that $\dot{\mathbb Q}$ should indeed use former $\kappa$-Magidor models as the small type.  Since $\mathbb P$ forces $2^\omega = \kappa$, the small models of $\mathbb Q$ will not be countably closed.  Thus, as stated in \cite{neeman}, ``preservation of $\omega_1$ requires a special argument.''  Neeman also clarified that this special argument is that in $V^{\mathbb P}$, $\mathbb Q$ is completely proper for a stationary class of countable models.  Specifically, if $\theta^*>\theta$ is regular, $N \prec H_{\theta^*}$ is countable model in $V$, $\mathbb P * \dot{\mathbb Q} \in N$, $G \subseteq \mathbb P$ is generic, and $N \cap V_\kappa$ appears in $G$, then for any $q \in N[G] \cap \mathbb Q$, there is $q' \leq q$ such that $\{ r \in N[G] \cap \mathbb Q : q' \leq r \}$ is a $\mathbb Q$-generic filter over $N[G]$.  In particular, by a standard argument, $\mathbb Q$ will not add $\omega$-sequences of ordinals.  This complete properness claim also played a key role in the argument for the tree property.

However, Theorem \ref{wccsquare} combined with Lemma \ref{forcewcc} shows that, setting $\mu = \omega_1$ and taking $\kappa$ and $\theta$ sufficiently large, if the second stage $\mathbb Q$ is one of our posets $\mathbb Q_n$ for $n<3$, and it preserves $\omega_1$ and doesn't add $\theta$-many new reals over $V^{\mathbb P}$, then it forces $\square^*_{\omega_1}$, which contradicts the tree property at $\omega_2$.

\subsection{Failure of $\wcc(\omega_2,\cof(\omega_1))$ in the Mohammadpour-Veli\v{c}kovi\'c model}

We present here, for those familiar with the details of \cite{mv}, a sketch of an argument due to Mohammadpour that in a forcing extension by the virtual-models poset of \cite{mv}, there is a function $g : \omega_2 \to \omega_2$ that dominates all canonical functions on $\omega_2$ modulo clubs on $\omega_2\cap\cof(\omega_1)$. 

The forcing $\mathbb P^\kappa_\lambda$ uses a supercompact $\kappa$ and an inaccessible $\lambda>\kappa$.  The poset preserves $\omega_1$, turns $\kappa$ into $\omega_2$, and turns $\lambda$ into $\omega_3$.  The conditions in their poset are finite sequences of countable and $\kappa$-Magidor models, not necessarily elementary in $V_\lambda$, but possibly sets that look like partial transitive collapses of elementary submodels of $V_\lambda$.  See \cite{mv} for details.  

They define $E = \{ \alpha<\lambda : \kappa\in V_\alpha\prec V_\lambda \}$.  For each $\alpha \in E$, there is a projection of $\mathbb P^\kappa_\lambda$ to what we might call the ``$\alpha^{th}$ level,'' $\mathbb P^\kappa_\alpha$, and these projections commute.  For a generic $G \subseteq \mathbb P^\kappa_\lambda$, let $G_\alpha$ denote the projection of $G$ to $\mathbb P^\kappa_\alpha$.  Lemma 4.37 of \cite{mv} says that if $G \subseteq \mathbb P^\kappa_\lambda$ is generic, $\alpha \in E$, $\cf(\alpha)<\kappa$, and $C_\alpha(G) = \{ \sup(M \cap \kappa) : M$ is a model that is ``active at $\alpha$'' and appears in $G_\alpha \}$, then $C_\alpha(G)$ is a club in $\kappa$.  
The basic definitions of the partial order imply that for each $\alpha \in E$ and $\gamma<\kappa$, there is at most one model $M_\gamma^\alpha$ that is active at $\alpha$, appears in $G_\alpha$, and has $M \cap \kappa = \gamma$.  Furthermore, if $\alpha<\beta$ are in $E$, then for all but boundedly-many $\gamma<\kappa$, $M^\beta_\gamma$ is active at $\alpha$.  If $M^\alpha_\gamma$, $M^\beta_\gamma$ are both defined and active at $\alpha$, then $M^\alpha_\gamma \cong M^\beta_\gamma$, since $M^\alpha_\gamma$ is a partial transitive collapse of $M^\beta_\gamma$.

In $V[G]$, we define representatives of cofinally-many of the canonical functions on $\kappa$ as follows.  For $\alpha \in E$ of cofinality $<\kappa$ and $\gamma \in C_\alpha(G)$ of cofinality $\omega_1$, let $f_\alpha(\gamma) = \ot(M_\gamma^\alpha \cap \alpha)$.  The set of $\kappa$-Magidor models active at $\alpha$ and appearing in $G_\alpha$ is a $\subseteq$-increasing chain of length $\kappa$, continuous at limits of uncountable coflinality, whose union covers $V_\alpha$.  If $\sigma : \kappa \to \alpha$ is any surjection in $V[G]$, then for club-many $\gamma<\kappa$ of cofinality $\omega_1$, $\sigma[\gamma] = M_\gamma^\alpha \cap \alpha$.  Thus $f_\alpha$ represents the $\alpha^{th}$ canonical function on $\cof(\omega_1)$.

Also, define $g_\alpha : C_\alpha(G) \to \kappa$ by $g_\alpha(\gamma) = \ot(M^\alpha_\gamma \cap \lambda)$.  By the above remarks, if $\alpha<\beta$ are in $E$ and have cofinality $<\kappa$, then $g_\alpha(\gamma) = g_\beta(\gamma)$ for all but boundedly-many $\gamma \in  C_\alpha(G) \cap C_\beta(G) \cap \cof(\omega_1)$.  Furthermore, it follows from an easy density argument that for all $\alpha \in E$ of cofinality $<\kappa$ and all but boundedly-many $\gamma<\kappa$, $\alpha\in M^\alpha_\gamma$, and thus  $M^\alpha_\gamma \cap V_\alpha$ is a proper subset of $M^\alpha_\gamma$.  Thus if $g = g_{\alpha_0}$, where $\alpha_0$ is the minimum point of $E$, then for all $\beta<\lambda$, $g$ is greater than the $\beta^{th}$ canonical function at club-many points of cofinality $\omega_1$.
\bibliographystyle{amsplain.bst}
\bibliography{cites.bib}

\end{document}